\DeclareMathOperator*{\argmin}{arg\,min}
\DeclareMathOperator{\Tr}{Tr}
\newtheorem{assumption}{Assumption}
\newcommand{\Rm}{{\mathbb R}}
\newcommand{\eps}{\varepsilon}
\newcommand{\commentout}[1]{}
\newcommand{\cL}{\mathcal{P}}
\renewcommand{\phi}{\varphi}
\newtheorem{thm}{Theorem}[section]
\newcommand{\be}{\begin{equation}}
\newcommand{\ee}{\end{equation}}
\newcommand{\bal}{\begin{aligned}}
\newcommand{\enbal}{\end{aligned}}
\newcommand{\one}{{\mathbbm{1}}}
\newcommand{\bhat}[1]{\expandafter\hat#1} 
\renewcommand{\d}{\partial}
\numberwithin{equation}{section}
\begin{document}
\title{Why does the two-timescale Q-learning  converge to different mean field solutions? A unified convergence analysis}




            


\author{\name Jing An \email jing.an@duke.edu \\
       \addr Department of Mathematics,\\
       Duke University 
       \AND
\name Jianfeng Lu \email
jianfeng@math.duke.edu \\
 \addr Department of Mathematics, Physics, and Chemistry,\\
       Duke University 
       \AND
\name Yue Wu \email
ywudb@connect.ust.hk\\
\addr Department of Mathematics, \\
The Hong Kong University of Science and Technology
\AND
\name Yang Xiang \email
maxiang@ust.hk \\
\addr
Department of Mathematics, \\
The Hong Kong University of Science and Technology \\
and \\
HKUST Shenzhen-Hong Kong Collaborative Innovation Research Institute
       }

\editor{My editor}

\maketitle

\begin{abstract}
We revisit the unified two-timescale Q-learning algorithm as initially introduced by  \cite{angiuli2022unified}. This algorithm demonstrates efficacy in solving mean field game (MFG) and mean field control (MFC) problems, simply by tuning the ratio of two learning rates for mean field distribution and the Q-functions respectively. In this paper, we provide a comprehensive theoretical explanation of the algorithm's bifurcated numerical outcomes under fixed learning rates. We achieve this by establishing a diagram that correlates continuous-time mean field problems to their discrete-time Q-function counterparts, forming the basis of the algorithm. Our key contribution lies in the construction of a Lyapunov function integrating both mean field distribution and Q-function iterates. This Lyapunov function facilitates a unified convergence of the algorithm across the entire spectrum of learning rates, thus providing a cohesive framework for analysis.
\end{abstract}
 
\begin{keywords}
  mean field games, mean field control, two-timescale algorithm, convergence analysis, reinforcement learning
\end{keywords}

\section{Introduction}
Reinforcement learning (RL) is a dynamic machine learning technique formalized through the framework of Markov Decision Processes (MDP), wherein an agent learns through interaction within an environment, relying on trial and error and feedback derived from its own actions and experiences \citep{sutton2018reinforcement}. RL has been prominent in artificial intelligence research in past decades and yields breakthroughs across diverse domains ranging from robotics \citep{kober2013reinforcement}, classical games  \citep{mnih2013playing, silver2016mastering}, to autonomous driving \citep{kiran2021deep}. RL is closely related to the optimal control problems in the sense that it optimizes the decision-making processes by maximizing long-term cumulative rewards or minimizing cumulative costs under accessible policies \citep{bertsekas2019reinforcement}. Multi-agent reinforcement learning (MARL) extends the classical RL to scenarios involving multiple agents interacting within a shared environment, and we refer to survey works \citep{busoniu2008comprehensive, zhang2021multi} for its fundamental background. Despite its empirical success, the scalability of MARL with respect to the number of agents remains to be a key issue \citep{hernandez2019survey}.

One approach to tackle the curse of scalability is to consider MARL in the regime with a large number of homogeneous agents. In this paradigm, mean field formulations provide a mathematical framework to model  and analyze large-scale interacting particle systems independent of the number of agents $N$. Particularly, we focus on mean field game (MFG) and mean field control (MFC) problems as their theory has been developed rapidly in recent years. Mean field games, initially introduced by \cite{Larsy2007} and \cite{caines2006large}, are non-cooperative $N$-player games aim to find a Nash equilibrium where no individual agent can unilaterally improve the outcome by changing strategies. On the other hand, a mean field control problem has a central planner to find the collective optimum in a cooperative game within a large population. We refer to books \cite{Bensoussan2013} and \cite{carmona2018probabilistic} for  further details of both MFG and MFC.

In the past years, solving stochastic control and games using model-free RL algorithms has gained a lot of interests, if one wants the agent to learn the optimal policy by directly interacting with the system without inferring the model parameters. We refer to \citep{hu2023recent, lauriere2022learning} for a comprehensive review of recent developments.
To learn MFG and MFC solutions, there are numerous algorithms available including but not limited to policy gradient based methods \citep{bhandari2024global, carmona2019linear, williams1992simple}, actor-critic methods for linear-quadratic models \citep{fu2019actor, yang2018mean,  wang2021global}, fixed point iterations relying on entropy-regularization \citep{cui2021approximately, guo2022entropy}, and value-based RL methods such as Q-learning \citep{angiuli2022unified, angiulia2023reinforcement, carmona2023model, guo2019learning, mguni2018decentralised, SubramanianMahajan-2018-RLstatioMFG,zaman2023oracle, gu2021mean}.

In this paper, we focus on the work by \cite{angiuli2022unified} that proposed a unified RL algorithm combining the classical Q-learning updates \citep{watkins1989learning, watkins1992q} with the two-timescale approach \citep{borkar1997stochastic}. This two-timescale Q-learning algorithm updates 
the mean field distribution and the value function iteratively, and can converge to either the MFG or MFC solutions by adjusting the ratio of associated Robbins-Monro learning rates to zero or infinity. A natural question to ask is why this simple two-timescale algorithm can produce bifurcated numerical results by just tuning two learning rates. We attempt to answer this question by:
\begin{enumerate}
    \item Building a complete roadmap connecting the discrete-time Q-learning algorithm to continuous-time Hamilton-Jacobi-Bellman (HJB) equations for both MFG and MFC. The corresponding HJB equations for MFG and MFC are different depending on whether the population distribution is fixed, while such dependence is not explicitly captured in the two-timescale Q-learning algorithm. 
    \item Providing a unified convergence of the two-timescale Q-learning algorithm covering all choices of fixed learning rates. Rather than focusing on the extreme regimes of learning rates ratios and qualitative analysis, we aim to explain the algorithm's bifurcated numerical behaviors quantitatively using the unified convergence result.
\end{enumerate}

\subsection{Our contributions}
For the first part of our work, we start from the continuous-time MFG and MFC value functions under stochastic control with infinite time horizon, and we give formulations of corresponding discrete-time value and Q-functions which the two-timescale Q-learning algorithm is built on. We provide a sequence of approximation results to verify connections in Fig.~\ref{fig:roadmap} with respect to the time discretization $h$.

For the second part, we construct a Lyapunov function integrating both mean field distribution and Q-function iterates from the two-timescale Q-learning algorithm and prove its convergence quantitatively. Our approach takes generic assumptions on the cost function and the transition kernel, in addition to assuming the transition kernel satisfying a uniform Doeblin's condition. The contraction of the constructed Lyapunov function exhibits explicit dependence on the two-timescale learning rates, thus explains how the two-timescale Q-learning algorithm can produce different solutions by tuning learning rates.

\begin{figure}[htb]
		\centering
		\includegraphics[width=12cm]{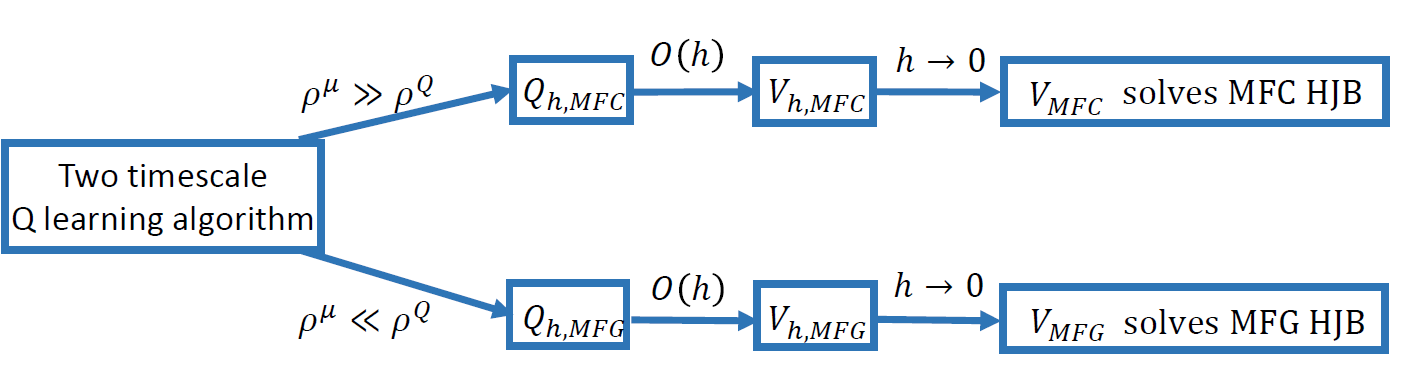}
         \caption{ 
The diagram that links two-timescale Q-learning algorithm with optimal value functions solving HJB equations.
}
           \label{fig:roadmap}
\end{figure}

\subsection{Related works}
We mention several works related to our approach. We build the convergence diagram as in Fig. \ref{fig:roadmap} since there is no continuous-time limit for the Q-learning iterations \citep{tallec19a}.  In the
continuous-time setting where we can seek for differences of MFG and MFC in the HJB equations, the Q-function from the algorithm becomes ill-posed and collapses to the value function that is independent of actions. To analyze the continuous-time counterpart of Q-learning, \cite{Kim2021hjb} restricts the action process to be Lipschitz continuous so that Q-learning becomes a policy evaluation problem with
the state-action pair as the new state variable. \cite{jia2023q} and \cite{wang2020reinforcement} consider and analyze the entropy-regularized, exploratory diffusion process formulation which approximates the classical Q-function independent of time discretization. We list a few other papers \citep{kim2020hamilton, gu2016continuous, jiang2015global, palanisamy2014continuous, vamvoudakis2017q} regarding general continuous-time RL. 

On the other hand, our unified convergence of the two-timescale Q-learning algorithm takes motivations from the community that studies bi-level optimization. One problem somewhat related to setups in this paper is the linear quadratic
regulator (LQR) in RL, and there have been many works studying two-timescale actor-critic algorithms for solving the LQR problem \citep{konda1999actor,ZhouLu2023, yang2019provably, zeng2021two}. In particular, our Lyapunov function construction is inspired by \cite{ZhouLu2023} who constructed a Lyapunov
function involving both the critic error and the actor loss, although our goal is different from \cite{ZhouLu2023}: We try to find the fixed point of mean field problems, while \cite{ZhouLu2023} aims to solve optimization problems. 

We also mention that a recent paper by \cite{angiuli2023convergence} applies the theory of stochastic approximation \citep{borkar1997stochastic} to the two-timescale Q-learning algorithm, and they showed the algorithm convergence in extreme regimes where the ratio of learning rates is either zero or infinity. Compared to \cite{angiuli2023convergence}, our approach of using the Lyapunov function is new and takes  care of all ranges of learning rate ratios. Moreover, the convergence in \cite{angiuli2023convergence} is qualitative while our result is quantitative.

\subsection{Organization}
The paper is organized as follows. 
In Section \ref{sec:background}, we review the formulations of the mean field game and mean field control problem that are the focus of our study.
In Section \ref{sec:problem}, we outline the discrete value functions and Q-functions for MFG and MFC in bounded state and action spaces, and we review the continuous-time value functions solving the HJB equations derived in unbounded state and action spaces. A sequence of approximation errors between various formulations are provided. In Section \ref{sec:alg}, we revisit the two-timescale Q-learning algorithm and illustrate its bifurcated numerical behaviors by a toy example.
In Section \ref{sec:conv}, we conduct the unified convergence analysis for the two-timescale Q-learning algorithm. Lastly,
the numerical experiments that verifying the algorithm are provided in Section \ref{sec:numeric}.

\paragraph{Notations}
Throughout the paper, we use $\|\cdot\|$ to denote the Euclidean norm;  \\$\|Q(\cdot, \cdot )\|_\infty:=\sup_{x\in\mathcal{X}, a\in\mathcal{A}} |Q(x,a)|$; $\|\mu\|_p =\big( \sum_{x\in\mathcal{X}} \mu(x)^p\big)^{1/p}$, and the total variation norm is $\|\mu\|_{\text{TV}} = \sup_{A\subseteq \mathcal{X}}\big|\sum_{x\in A}\mu(x)\big|$.

\paragraph{Acknowledgments}
JA would like to express thanks to Mo Zhou, Lei Li, Yingzhou Li, and Jiequn Han for fruitful discussions. This work was done during YW’s visit of Duke University and Rhodes Information Initiative at Duke. YX was partially supported by the Project of Hetao Shenzhen-HKUST Innovation Cooperation Zone HZQB-KCZYB-2020083.

\section{Background}\label{sec:background}
We consider the Markov Decision Process (MDP) \citep{bellman1957markovian, watkins1989learning} with finite state and action spaces, which we denote by $\mathcal{X}$ and $\mathcal{A}$ respectively. $\mathcal{P}(\mathcal{X})$ is the space of probability measures on $\mathcal{X}$.  
The transition probability kernel can be viewed as a function 
\begin{equation}
    p: \mathcal{X}\times \mathcal{X}\times \mathcal{A}\times \mathcal{P}(\mathcal{X})\to [0,1],\quad (x,x',a,\mu)\mapsto p(x'\mid x,a,\mu),
\end{equation}
which is, under the population distribution $\mu$, the probability of jumping from state $x$ to state $x'$ using action $a$. Let $f:  \mathcal{X} \times  \mathcal{A} \times \mathcal{P}( \mathcal{X})\to \mathbb{R}_+$ be a running cost function. $f(x,a,\mu)$ can be interpreted as the one-step cost incurred by an agent at state $x$ to take an action $a$, when the population distribution is $\mu$.

There are various formulations of MFG and MFC problems available in the literature. In \cite{angiuli2022unified}, three formulations in the infinite horizon were presented:   asymptotic, non-asymptotic, and stationary. We will focus on the asymptotic formulations given in \cite{angiuli2022unified}, since then the problem faced by an infinitesimal agent among the crowd can be viewed as a MDP
parameterized by the population distribution. We refer other infinite time horizon formulations to \cite{angiuli2022unified} and the finite time horizon version to \cite{angiulia2023reinforcement} if readers are interested.

We start by reviewing the formulation of stochastic control problems in the infinite time horizon with continuous state space. Let $(\Omega, \mathcal{F}, \mathbb{P}) $ be a probability space accompanied with filtration $\{\mathcal{F}_t\}_{t\geq0} $  generated by a standard $n$-dimensional Brownian motion $B=\{B_t\}_{t\geq 0}$. For any time $t$, one has the state $X_t\in \mathcal{X}\subseteq\Rm^d$ following the McKean–Vlasov dynamics (i.e., distribution-dependent dynamics) and the Markovian control $\alpha_t=\alpha(X_t): \mathcal{X}\to \mathcal{A}\subseteq \Rm^k$. Given Borel-measurable functions
\begin{equation}
    b: \mathcal{X} \times  \mathcal{A}\times \mathcal{P}(\mathcal{X})\to \Rm^d,\quad \sigma:  \mathcal{X} \times  \mathcal{A}\times \mathcal{P}(\mathcal{X})\to \Rm^{d\times n}
\end{equation}
that satisfy necessary conditions for the well-posedness (see Section~\ref{sec:sdecontrol} for details). The stochastic control problem is that an agent controls her state $X$ via a sequence of actions (policy) $\alpha$ with the goal of minimizing the expected discounted cost
\begin{equation}\label{eqn:sc}
\begin{aligned}
			\inf_{\alpha}	J^{\mu  } ( \alpha )  &= 
			\inf_{\alpha} \mathbb{E} \left[ \int_{0}^{\infty} e^{-\gamma t} f(X_t , \alpha_t, \mu_t ) dt\right], \\
			\text{s.t. } & dX_{t} = b(X_t,\alpha_t,\mu_t) dt+ \sigma(X_t,\alpha_t,\mu_t)  dB_t,\quad  \quad  X_0 \sim \mu_0,
		\end{aligned}
\end{equation}
with a discount factor $\gamma >0$ and the probability measure flow $\mu_t$ starting from $\mu_0=\mathcal{P}[X_0]$, i.e., $\mu_t$ is the law of $X_t$. For more general versions of stochastic control problems and associated theory, we refer readers to the book by \cite{carmona2018probabilistic}.

The above general formulation with stochastic differential equation (SDE) control is based on unbounded state space $\mathcal{X}$. To be closely connected with reinforcement learning with a bounded state space $\mathcal{X}$ and an action space $\mathcal{A}$, we would consider MFG and MFC problems on discrete state space in the asymptotic sense following \cite[Section 2.2]{angiuli2022unified}. In this setup, the control does not depend on time but only on the state, since the transition probability $p$ and the cost function $f$ only depend on the limiting distributions other than time. The SDE control is replaced by the transition probability $p$, and the discount prefactor $e^{-\gamma t}$ is replaced by $r^k$ for some $r\in(0,1)$.
\paragraph{Mean Field Game (MFG)}    
Solving a MFG problem is to find a Nash equilibrium
$ (\hat\alpha,  \hat\mu)$ in a non-cooperative game by following:
\begin{enumerate}
    \item Fix a  probability distribution $\hat{\mu} \in\mathcal{P}(\mathcal{X})$  and solve the standard stochastic control problem 
\begin{equation}\label{opt_prob-mfg}
		\begin{aligned}
			\inf_{\alpha}	J^{ \hat{\mu}  } ( \alpha )  &= 
			\inf_{\alpha} \mathbb{E} \left[ \sum_{k=0}^\infty r^k f(X_k^{\alpha, \hat{\mu}} , \alpha(X_k^{\alpha,\hat{\mu}}),  \hat\mu ) \right], \\
			\text{s.t. } & X_{k+1}^{\alpha,\hat{\mu}}\sim p(\cdot \mid  X_k^{\alpha,\hat{\mu}}, \alpha(X_k^{\alpha,\hat{\mu}}), \hat\mu),\quad  X_0^{\alpha, \hat{\mu}} \sim \mu_0 ,
		\end{aligned}
\end{equation}
\item Given the  optimal control $\hat{\alpha}$, find the fixed point $\hat{\mu}$ such that
\[\hat{\mu}= \lim_{k\to\infty}\mathcal{P}[X^{\hat{\alpha},\hat{\mu}}_k]. \]
\end{enumerate}
\paragraph{Mean Field Control (MFC)}
Different from MFG that has fixed $\mu$ in the first step, the population distribution $\mu_k = \mathcal{P}[X_k^{\alpha}]$ in MFC changes instantaneuously when $\alpha$ changes. The asymptotic version of the problem is thus written as 
    \begin{equation}\label{opt_prob-mfc}
		\begin{aligned}
\inf_{\alpha}	J( \alpha )  &= 
			\inf_{\alpha} \mathbb{E} \left[  \sum_{k=0}^\infty r^k  f(X_k^{\alpha} , \alpha_k,  \lim_{k\to\infty}\mathcal{P}[X_k^\alpha] ) \right], \\
			\text{s.t. } & X_{k+1}^\alpha\sim p(\cdot \mid  X_k^\alpha, \alpha(X_k^\alpha), \lim_{k\to\infty}\mathcal{P}[X_k^\alpha]),\quad  X_0^{\alpha} \sim \mu_0,
		\end{aligned}
    \end{equation}
so that the control $\alpha$ is independent of time, as $p$ and $f$ depend only on the limiting distribution (as $k \to \infty$).    

We emphasize that the main difference between the two is that in MFG, the distribution $\mu$ is prescribed when the optimal control is solved (and hence the superscript $\mu$ in the notation), while in MFC, the distribution depends on the choice of $\alpha$, when the policy is optimized.




\section{Value functions and Q-functions}\label{sec:problem}
We first recall formulations of value functions and Q-functions in both continuous and discrete time. With these, we establish the sequence of approximations in Fig.~\ref{fig:roadmap} from $Q_h$ which satisfies the Bellman equation to $V$ which solves the HJB equation.

\subsection{Value functions}
We recall the classical continuous-time \textit{value functions} for mean field game (MFG) and mean field control (MFC) problems, respectively. The value function of the MFG, with any fixed population distribution $\mu \in \mathcal{P}(\mathcal{X})$, is written as
\begin{equation}\label{def:V-MFG}
    V_{\text{MFG}}^{\alpha,\mu}(x) = \mathbb{E} \left[ \int_{0}^{\infty} e^{-\gamma s} f(X_s^{\alpha,\mu} , \alpha_s, \mu ) ds \bigg\vert X_0 =x \right].
\end{equation}
On the other hand, the value function of the MFC, different from MFG, has population distribution $\mu_t\in\mathcal{P}(\mathcal{X}) $ changing over time depending on the control. For asymptotic MFC, it is defined as
\begin{equation}\label{def:V-MFC}
   V_{\text{MFC}}^{\alpha} (x) =   \mathbb{E}\left[ \int_0^{\infty}  e^{-\gamma s} f(X_s^\alpha, \alpha_s, \lim_{t\to\infty}\cL[X_t^\alpha]) ds \bigg\vert X_0 =x \right].
\end{equation}
For formulations (\ref{def:V-MFG}) or (\ref{def:V-MFC}), the dynamics of $X_t$ follows a Markov process with 
\begin{equation}
    X_t\sim p(\cdot \mid X_{t'}, \alpha(X_{t'}), \mu)\quad \text{for} ~t' < t,
\end{equation}
where $\mu$ is fixed for MFG and $\mu=  \lim_{t\to\infty}\cL[X_t]$ for MFC (recall we consider the asymptotic MFC in this work).

Analogously, if we consider the discrete MDP $(\mathcal{X},\mathcal{A}, e^{-\gamma h}, f_k)$ as a counterpart of the continuous-time MDP with time discretization $h$, and we use the notations  $X_k \equiv X_{k h}, \alpha_k \equiv \alpha_{kh}= \alpha(X_k), \mu_k\equiv \mu_{kh}, f_k \equiv f(X_k,\alpha_k,\mu_k)$, then given an admissible policy $\alpha$, the \textit{discrete  value functions} $V_h^{\alpha}$ for MFG has the form of
\begin{equation}\label{Vh_MFG}
     V_{h, \text{MFG}}^{\alpha,\mu}(x) :=  \mathbb{E} \left[h \sum_{k=0}^{\infty}  e^{-k\gamma h} f(X_k^{\alpha,\mu} , \alpha_k, \mu )  \bigg\vert X_0 =x  \right],
\end{equation}
with the state $X_k^{\alpha,\mu}$ changes by
\begin{equation}
    X_{k+1}^{\alpha,\mu} \sim p( \cdot \mid X_k^{\alpha,\mu}, \alpha_k, \mu ).
\end{equation}
Similarly, for MFC, we have the form
\begin{equation}\label{Vh_MFC}
     V_{h, \text{MFC}}^{\alpha} (x) =   
     \mathbb{E}\left[ h \sum_{k=0}^\infty e^{-k\gamma h} f(X_{k}^\alpha, \alpha_{k}, \lim_{k\to\infty}\cL[X_k^\alpha])  \bigg\vert X_0 =x \right],
\end{equation}
with the state $X_k^\alpha$ changes by
\begin{equation}
    X_{k+1}^\alpha \sim p( \cdot \mid X_k^\alpha, \alpha_k, \lim_{k\to\infty}\cL[X_k^\alpha] ).
\end{equation}
For (\ref{def:V-MFG}) and (\ref{def:V-MFC}),  we can derive the optimal value functions by optimizing over policies $\alpha$:
\begin{equation}
    V_{\text{MFG}}^{\mu}(x) = \inf_{\alpha}V_{\text{MFG}}^{\mu,\alpha}(x),\quad  V_{\text{MFC}}(x) = \inf_{\alpha}V_{\text{MFC}}^{\alpha}(x),
\end{equation}
Similarly, for (\ref{Vh_MFG}) and (\ref{Vh_MFC}), the discrete optimal value functions are defined as
\begin{equation}
    V_{h, \text{MFG}}^{\mu}(x) = \inf_{\alpha_h}V_{h, \text{MFG}}^{\mu,\alpha}(x),\quad  V_{h, \text{MFC}}(x) = \inf_{\alpha_h}V_{h, \text{MFC}}^{\alpha}(x).
\end{equation}
In addition, we introduce the assumption on the cost function $f$ that will be used throughout the paper.
\begin{assumption}\label{assump:f}
We assume that the cost function $f: \mathcal{X} \times \mathcal {A}  \times \mathcal{P}(\mathcal{X}) \rightarrow \mathbb{R}$ is bounded and Lipschitz continuous in $\mu$, in the sense that there exists a constant $ L_{\mu}>0$ such that for every $(x,a)\in \mathcal{X} \times \mathcal {A}$,
\begin{equation}
\begin{aligned}
    \big|f(x,a,\mu_1) - f(x,a,\mu_2)\big| \leq  L_{\mu} \|\mu_1-\mu_2\|_{\text{TV}}
     \quad  \text{for any }\mu_1, \mu_2 \in \mathcal{P}(\mathcal{X}).
\end{aligned}
\end{equation}
\end{assumption}

\subsection{HJB equations with SDE controls}\label{sec:sdecontrol}
While for most of this work, we consider discrete state space, we study in this section the continuous state space analog, where the state dynamics is given by stochastic differential equations (controlled diffusion)
\begin{equation}
    dX_{t} = b(X_t,\alpha_t, \mu_t) dt+ \sigma(X_t,\alpha_t, \mu_t)  dB_t.
\end{equation}
We use this setup to review the familiar Hamilton-Jacobi-Bellman equations, which would shed light on the difference between MFG and MFC in the continuous-time solution viewpoint. Furthermore, our numerical experiments in Section~\ref{sec:numeric} are based on discretizations of the SDE.

For continuous state space models, we require some additional assumptions to ensure the wellposedness of the problem. 

\begin{assumption}\label{assump:f_extra}
Given an unbounded state space $\mathcal{X}$, we assume that the cost function $f: \mathcal{X} \times \mathcal {A}  \times \mathcal{P}(\mathcal{X}) \rightarrow \mathbb{R}$ is bounded and measurable. For any fixed $\mu \in\mathcal{P}(\mathcal{X})$,  $f$ is Lipschitz continuous in $x,a$, in the sense that there exist constants $L_x, L_{\alpha}>0$ such that 
\begin{equation}
\begin{aligned}
    \big|f(x_1,\alpha_1,\mu) - f(x_2,\alpha_2,\mu)\big| \leq  L_{x} \|x_1-x_2\| +L_{\alpha}\|\alpha_1-\alpha_2\|
     \quad \text{for any } x_1, x_2\in\mathcal{X},~a_1, a_2\in \mathcal{A}.
\end{aligned}
\end{equation}
\end{assumption}
\begin{assumption}\label{assump:b-sigma}
We assume that for any $(x,a, \mu) \in \mathcal{X} \times \mathcal{A}\times \mathcal{P}(\mathcal{X})$, both $b(x, a, \mu)$ and $\sigma(x,a,\mu)$ are measurable, bounded, and Lipschitz in $x,a$, which means that  there exist  constants $K_x, K_\alpha>0$, and for every $\mu\in \mathcal{P}(\mathcal{X})$, it holds uniformly that
\begin{equation}
    \begin{aligned}
        &\|b(x',a',\mu)-b(x,a,\mu)\|\leq K_x\|x'-x\|+K_\alpha\|a'-a\|,\\
        &\|\sigma(x',a',\mu)-\sigma(x,a,\mu)\|\leq K_x\|x'- x\|+K_\alpha\|a'-a\|.
    \end{aligned}
\end{equation}
Moreover, both $b(x, a,\mu)$ and $\sigma(x,a,\mu)$ are differentiable in $x$ and $a$.
\end{assumption}

Given the optimal value functions $V$, one can obtain the  HJB equations for MFG and MFC by following the derivations in Chapter 3 and Chapter 4 of \cite{Bensoussan2013} respectively. For simplicity, we give the statement with constant $\sigma(x,a)\equiv \sigma>0$.

\begin{definition}
    We say $f(x,a,\mu)$ is differentiable in $\mu$ if the first variation
    \begin{equation}
        \frac{d}{d\epsilon}f(x,a,\mu+\epsilon m) \Big|_{\eps=0}:=\int_{\mathcal{X}}\frac{\delta f(x,a,\mu)}{\delta \mu}(\xi) \, m(d \xi) 
    \end{equation}
    exists, for any $m\in\mathcal{P}(\mathcal{X})$.
\end{definition}
\begin{theorem}[\cite{Bensoussan2013}]\label{thm:V-HJB}
Under Assumptions \ref{assump:f}, \ref{assump:f_extra}, and \ref{assump:b-sigma}, with the Hamiltonian
\begin{equation}\label{eqn:hamiltonian}
    H(x, \mu, q) := \inf_{\alpha} \left\{ q \cdot b(x,\alpha,\mu)   + f(x, \alpha, \mu)\right\},
\end{equation}
the optimal value function
$ V_{\text{MFG}}^\mu$ for asymptotic MFG satisfies the HJB equation
\begin{equation}\label{eqn:MFG-HJB}
    -\gamma V^{\mu}(x)+\frac{\sigma^2 \Tr{\nabla^2 V^{\mu}(x)}}{2}+ H(x,\mu, \nabla V^{\mu}(x))=0.
\end{equation}

On the other hand, the optimal value function
$ V_{\text{MFC}}$ for asymptotic MFC satisfies the HJB equation
\begin{equation}\label{eqn:MFC-HJB}
    -\gamma V(x) + \frac{\sigma^2 \Tr{\nabla^2 V(x)}}{2}+ H(x,\mu, \nabla V(x)) + \int_{\mathcal{X}}\frac{\delta  H(y, \mu, \nabla V(y))}{\delta \mu}  (x) \, \mu (dy) =0, 
\end{equation}
coupled with $\mu$ solving the stationary Fokker-Planck equation
\begin{equation}\label{eqn:fp}
        -\sum_{i=1}^d \frac{\d}{\d x_i}(b_i(x, \hat{\alpha}, \mu )\mu) +\frac{\sigma^2}{2}\Delta \mu= 0,
\end{equation}
where $\hat{\alpha}$ is the optimal control for the Lagrangian in (\ref{eqn:hamiltonian}).
\end{theorem}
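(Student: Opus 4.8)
The plan is to derive both equations from dynamic programming, handling MFG as a classical stochastic-control HJB derivation (since the measure $\mu$ is frozen) and MFC as its McKean--Vlasov analogue, in which the dependence of $\mu$ on the chosen policy generates the extra first-variation term. Throughout I would assume enough regularity of the optimal value function (e.g.\ $V\in C^2$) so that Itô's formula applies and a verification argument closes the loop; Assumptions~\ref{assump:f_extra} and~\ref{assump:b-sigma} supply the Lipschitz and boundedness bounds needed for well-posedness and for the infimum in the Hamiltonian to be attained.

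For the MFG equation I would first invoke the dynamic programming principle: for small $\delta>0$,
\begin{equation}
V^{\mu}(x) = \inf_{\alpha} \mathbb{E}\left[\int_0^\delta e^{-\gamma s} f(X_s,\alpha_s,\mu)\,ds + e^{-\gamma\delta}V^{\mu}(X_\delta)\,\Big|\,X_0=x\right].
\end{equation}
Applying Itô's formula to $e^{-\gamma t}V^{\mu}(X_t)$ with the constant-$\sigma$ generator, taking expectations so the stochastic integral drops, dividing by $\delta$ and sending $\delta\to0$ yields
\begin{equation}
0 = \inf_{\alpha}\left\{ f(x,\alpha,\mu) - \gamma V^{\mu}(x) + \nabla V^{\mu}(x)\cdot b(x,\alpha,\mu) + \frac{\sigma^2}{2}\Tr{\nabla^2 V^{\mu}(x)}\right\}.
\end{equation}
Since the terms $-\gamma V^{\mu}(x)$ and $\tfrac{\sigma^2}{2}\Tr{\nabla^2 V^{\mu}(x)}$ do not depend on $\alpha$, I would pull them out of the infimum and recognize the remaining bracket as $H(x,\mu,\nabla V^{\mu}(x))$ from~(\ref{eqn:hamiltonian}), which is exactly~(\ref{eqn:MFG-HJB}).

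For the MFC equation the planner's problem is a control problem for the McKean--Vlasov dynamics in which $\mu=\lim_{t\to\infty}\cL[X_t^{\alpha}]$ reacts to the policy; I would phrase it as minimizing the discounted cost subject to the constraint that $\mu$ be the stationary law of the controlled diffusion, which is precisely the stationary Fokker--Planck equation~(\ref{eqn:fp}). Introducing $V$ as the adjoint field (Lagrange multiplier) associated with this constraint and writing the first-order optimality condition, the new feature relative to MFG is that an infinitesimal change of $\alpha$ perturbs not only the local drift and running cost (the $H(x,\mu,\nabla V(x))$ contribution, identical to MFG) but also the measure $\mu$ that enters $b$ and $f$ at every state. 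Collecting this distributional sensitivity through the linear-functional derivative $\delta/\delta\mu$ and integrating against the stationary measure produces the correction $\int_{\mathcal{X}} \frac{\delta H(y,\mu,\nabla V(y))}{\delta\mu}(x)\,\mu(dy)$, giving~(\ref{eqn:MFC-HJB}) coupled with~(\ref{eqn:fp}). I would follow the derivations in Chapters~3 and~4 of~\cite{Bensoussan2013} to make these variations rigorous.

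The hard part will be the MFC derivation. First, one must justify differentiation with respect to the measure argument---this is exactly why the first variation $\delta f/\delta\mu$ is defined just before the statement---and verify that the resulting density-level computations are legitimate for the stationary law. Second, and more subtly, one must show that the envelope (first-order) condition lets one evaluate $\delta H/\delta\mu$ at the optimizer $\hat\alpha$ without an additional $\delta\hat\alpha/\delta\mu$ term, i.e.\ establish the mean-field-control analogue of the envelope theorem; this is what cleanly separates the cooperative correction term from the non-cooperative MFG equation and is where the interchange of optimization with the stationary-distribution constraint must be handled with care.
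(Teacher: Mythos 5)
Your proposal is correct and follows essentially the same route as the paper, which does not prove this theorem itself but defers entirely to the derivations in Chapters 3 and 4 of the cited reference \citep{Bensoussan2013}: dynamic programming plus It\^{o}'s formula for the frozen-measure MFG equation, and a variational argument against the stationary Fokker--Planck constraint that produces the first-variation correction term for MFC. You also correctly identify the one genuine subtlety (the envelope condition allowing $\delta H/\delta\mu$ to be evaluated at the optimizer without a $\delta\hat\alpha/\delta\mu$ term), which is exactly what the cited derivation handles.
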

From the above HJB equations, it is straightforward to see that, in general $V_{\text{MFG}}^\mu$ and $ V_{\text{MFC}}$ are different solutions, as in the case of MFC the HJB equation has an additional term due to the coupling with $\mu$. In next few sections, we will study how the two-timescale Q-learning algorithm converges to these different value functions. 


\medskip 

The following results state that given  any policy $\alpha$, the discrete value function is close to the continuous-time value function for sufficiently small $h$, which implies similar approximation results  for optimal value functions.
\begin{theorem}\label{thm:conv_h}(Informal version of Theorem \ref{thm:conv_formal})
Under appropriate assumptions, under an given policy $\alpha$,
for all $x \in {\mathcal{X}}$, one has approximations
\begin{equation}
\label{eq:conv-value}
\lim_{h\to 0}V_{h, \text{MFG}}^{\mu,\alpha}(x) = V_{\text{MFG}}^{\mu,\alpha}(x),
\quad
\lim_{h\to 0}V_{h, \text{MFC}}^{\alpha} (x)  = V_{\text{MFC}}^{\alpha}(x).
\end{equation}
\end{theorem}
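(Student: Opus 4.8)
The plan is to reduce the statement to a Riemann-sum approximation along diffusion trajectories and then control the time-discretization error together with the infinite-horizon tail. Throughout I take the discrete transition kernel $p$ to be the time-$h$ transition kernel of the controlled diffusion $dX_t = b(X_t,\alpha_t,\mu)\,dt + \sigma\,dB_t$, and realize the chain and the diffusion on a common probability space (the SDE driven by $B$, sampled at the grid times $t=kh$), so that $X_k^{\alpha,\mu}$ and $X_{kh}^{\alpha,\mu}$ coincide pathwise. With this identification, $h\sum_{k=0}^\infty e^{-k\gamma h} f(X_k^{\alpha,\mu},\alpha_k,\mu)$ is exactly the left-endpoint Riemann sum, with mesh $h$, of the integrand $s\mapsto e^{-\gamma s} f(X_s^{\alpha,\mu},\alpha_s,\mu)$ defining $V_{\text{MFG}}^{\mu,\alpha}(x)$, so the proof reduces to showing this sum converges in expectation to the integral.

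First I would use the discount factor and the boundedness of $f$ from Assumption~\ref{assump:f} (say $|f|\le M$) to truncate both expressions to a finite horizon $[0,T]$. The continuous tail is bounded by $\tfrac{M}{\gamma}e^{-\gamma T}$, while the discrete tail is $hM\sum_{k\ge T/h} e^{-k\gamma h}\le \frac{hM\,e^{-\gamma T}}{1-e^{-\gamma h}}$, which converges to $\tfrac{M}{\gamma}e^{-\gamma T}$ as $h\to 0$ and is therefore uniformly small in $h$. Given $\eps>0$, choosing $T$ large makes both tails below $\eps/3$, uniformly for $h$ under a threshold. On the window $[0,T]$ I would then estimate the per-step discrepancy $\int_{kh}^{(k+1)h} e^{-\gamma s} f(X_s,\alpha_s,\mu)\,ds - h\,e^{-k\gamma h} f(X_{kh},\alpha_{kh},\mu)$. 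Replacing the smooth prefactor $e^{-\gamma s}$ by $e^{-k\gamma h}$ contributes $O(h^2 e^{-k\gamma h})$ per step, summing to $O(h)$. The remaining term is controlled by the Lipschitz bound of Assumption~\ref{assump:f_extra}, giving $L_x\,\mathbb{E}\|X_s-X_{kh}\| + L_\alpha\,\mathbb{E}\|\alpha_s-\alpha_{kh}\|$; the moment estimate $\mathbb{E}\|X_s-X_{kh}\|\le C\sqrt h$ for $s\in[kh,(k+1)h]$ (from boundedness of $b,\sigma$ in Assumption~\ref{assump:b-sigma} via Itô's isometry), combined with sufficient regularity of the policy $\alpha$ (e.g.\ Lipschitz, so that $\alpha_s=\alpha(X_s)$ inherits the same modulus), makes each per-step error $O(h^{3/2})$ in expectation; summing the $T/h$ windows yields a total of order $\sqrt h$, below $\eps/3$ once $h$ is small. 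Alternatively, since for a.e.\ path the integrand is continuous in $s$, the Riemann sum converges pathwise, and the uniform domination $\frac{hM}{1-e^{-\gamma h}}\to M/\gamma$ lets bounded convergence pass the limit through the expectation directly.

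For the MFC case there is one additional subtlety: the fixed measure is $\mu=\lim_{t\to\infty}\cL[X_t^\alpha]$ in the continuous problem but $\lim_{k\to\infty}\cL[X_k^\alpha]$ in the discrete problem, and these invariant laws need not coincide at fixed $h$. I would show they converge to each other as $h\to 0$, which is where an ergodicity/Doeblin condition enters to guarantee a unique invariant measure depending continuously on $h$, and then invoke the Lipschitz-in-$\mu$ bound of Assumption~\ref{assump:f} to convert the measure discrepancy $\|\mu_h-\mu\|_{\text{TV}}$ into an additive $L_\mu$-error inside $f$ that also vanishes as $h\to0$.

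I expect the main obstacle to be precisely this interchange of limits in the MFC analysis: one must control the infinite-horizon (ergodic) behaviour simultaneously with the vanishing time step, and the invariant-measure convergence has to be made uniform enough to survive the $h\to0$ passage, all while the Hölder-$\tfrac12$ regularity of diffusion paths caps the Riemann-sum rate at $O(\sqrt h)$. The MFG case, where $\mu$ is prescribed and identical in both formulations, sidesteps this entirely and follows from the truncation-plus-Riemann-sum argument alone.
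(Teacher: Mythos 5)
Your argument is correct and, for the MFG half, essentially identical to the paper's: the paper also writes $V_{h}^{\mu,\alpha}-V^{\mu,\alpha}$ as a sum of per-interval discrepancies, splits each into a discount-factor error of order $h$ and a Lipschitz-in-$(x,a)$ error controlled by $\mathbb{E}\|X_h^k-X_s\|=O(h^{1/2})$, and sums against $\int_0^\infty e^{-\gamma s}\,ds$ to get an overall $O(h^{1/2})$ rate (Theorem~\ref{thm:conv_formal}). One difference of setup: the paper takes the discrete chain to be the Euler--Maruyama scheme and invokes its strong order-$1/2$ error, whereas you couple the chain to the exact time-$h$ marginals of the diffusion; the paper's choice matches the sample-based kernel actually used in the experiments and costs only the same $O(h^{1/2})$ you already pay for the intra-interval increment. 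Where you genuinely diverge is the MFC case. You propose to compare the discrete and continuous \emph{limiting} (invariant) distributions via a Doeblin/ergodicity argument and then use the TV-Lipschitz continuity of $f$ in $\mu$ from Assumption~\ref{assump:f}; the paper instead compares the \emph{time-$t$ marginals} $\mathcal{P}[X_h^k]$ and $\mathcal{P}[X_s]$ and bounds $f(X_s,\alpha_s,\mathcal{P}[X_h^k])-f(X_s,\alpha_s,\mathcal{P}[X_s])$ with the second-order It\^{o}--Lions expansion of Lemma~\ref{lem:Ito}, which converts the $L^1$ pathwise estimate $\mathbb{E}\|X_h^k-X_s\|=O(h^{1/2})$ directly into a bound on the measure discrepancy. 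The paper needs this Lions-derivative machinery precisely because TV-Lipschitz continuity cannot be fed an $L^1$ coupling bound (closeness of random variables in $L^1$ does not control total variation of their laws), a pitfall your route sidesteps by comparing invariant measures instead --- at the price that the convergence $\|\mu_h-\mu\|_{\text{TV}}\to 0$ of the invariant measures, which you only assert, is the entire content of that step and must be made uniform; note that under your exact-sampling identification it is actually immediate since the sampled chain shares the diffusion's invariant law, but it becomes a real (if standard) ergodicity argument once the kernel is only an Euler--Maruyama approximation. Your approach trades the extra differentiability-in-$\mu$ hypotheses needed for Lemma~\ref{lem:Ito} for a uniform-in-$h$ ergodicity requirement, and yields the qualitative limit cleanly, though the $O(h^{1/2})$ rate of Theorem~\ref{thm:conv_formal} would then hinge on quantifying the invariant-measure convergence.
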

The idea of the proof is standard by extending Lemma 1 in \cite{tallec19a} to a stochastic version with additional assumptions. We defer the formal statement with convergence rates,  as well as proof details to Appendix~\ref{sec:appd}.

\begin{corollary}
By Theorem~\ref{thm:conv_h} and taking the optimal control $\hat{\alpha}$, we have that the approximations for the optimal value functions
\begin{equation}
\label{eq:conv-optimal-value}
\lim_{h\to 0}V_{h, \text{MFG}}^{\mu}(x) = V_{\text{MFG}}^{\mu}(x),
\quad
\lim_{h\to 0}V_{h, \text{MFC}}(x)  = V_{\text{MFC}}(x).
\end{equation}
\end{corollary}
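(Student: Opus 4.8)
The plan is to lift the policy-by-policy convergence of Theorem~\ref{thm:conv_h} to convergence after optimizing over $\alpha$. The natural tool is the elementary bound
\begin{equation}
\Big| \inf_\alpha V_{h}^{\alpha}(x) - \inf_\alpha V^{\alpha}(x) \Big| \le \sup_\alpha \big| V_{h}^{\alpha}(x) - V^{\alpha}(x) \big|,
\end{equation}
written here schematically for both the MFG and MFC families (with the superscript $\mu$ attached in the MFG case). This inequality reduces both claims in \eqref{eq:conv-optimal-value} to a single statement: that the approximation \eqref{eq:conv-value} holds \emph{uniformly} over admissible policies $\alpha$. I would establish this uniformity by tracking the constants in the formal error bound of Theorem~\ref{thm:conv_formal}: those constants are assembled only from the bound and the Lipschitz constants of $f$ and of $b,\sigma$ supplied by Assumptions~\ref{assump:f}, \ref{assump:f_extra}, and \ref{assump:b-sigma}, and hence do not depend on the particular policy. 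Granting this, one has $\sup_\alpha | V_h^\alpha(x) - V^\alpha(x) | \le C_h$ with $C_h \to 0$, and the displayed inequality closes the argument at once.

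If one prefers to avoid invoking uniformity outright, the conclusion follows from a two-sided squeeze, and this is where the optimal control $\hat\alpha$ flagged in the statement enters. For the upper bound, fix the continuous-time optimizer $\hat\alpha$, use $\inf_\alpha V_h^\alpha \le V_h^{\hat\alpha}$, and apply Theorem~\ref{thm:conv_h} to the single policy $\hat\alpha$:
\begin{equation}
\limsup_{h\to 0} \inf_\alpha V_h^{\alpha}(x) \le \lim_{h\to 0} V_h^{\hat\alpha}(x) = V^{\hat\alpha}(x) = \inf_\alpha V^{\alpha}(x).
\end{equation}
For the matching lower bound I would let $\hat\alpha_h$ be a (near-)minimizer of the discrete problem at scale $h$, so that $\inf_\alpha V_h^\alpha = V_h^{\hat\alpha_h}$, and estimate $V_h^{\hat\alpha_h} \ge V^{\hat\alpha_h} - C_h \ge \inf_\alpha V^\alpha - C_h$, using $V^{\hat\alpha_h} \ge \inf_\alpha V^\alpha$.

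The main obstacle is precisely this lower bound. Unlike the upper bound, it cannot be obtained from a single fixed policy, because the discrete minimizers $\hat\alpha_h$ may drift as $h \to 0$; controlling them brings back exactly the uniform error $\sup_\alpha| V_h^\alpha - V^\alpha| \le C_h \to 0$. I would therefore put the effort inside the proof of Theorem~\ref{thm:conv_formal}, verifying that the telescoping and discrete Gr\"onwall estimate inherited from Lemma~1 of \cite{tallec19a} never uses policy-dependent quantities beyond the uniform constants of the standing assumptions. The MFC case carries the extra wrinkle that the cost depends on the limiting law $\lim_{k\to\infty}\cL[X_k^\alpha]$, so I would additionally confirm that the stationary-distribution error is controlled uniformly in $\alpha$; the Lipschitz-in-$\mu$ property of $f$ from Assumption~\ref{assump:f} is the tool that converts such a law-level discrepancy into a cost-level one.
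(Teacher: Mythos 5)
Your proposal is correct and, in fact, more careful than what the paper offers: the paper states this corollary with no proof beyond the phrase ``by Theorem~\ref{thm:conv_h} and taking the optimal control $\hat\alpha$,'' i.e.\ the naive substitution of the optimizer into the policy-by-policy result. You rightly diagnose that this substitution alone only yields one inequality, $\limsup_{h\to 0}\inf_\alpha V_h^\alpha \le \inf_\alpha V^\alpha$, and that the matching lower bound requires either the uniform error bound $\sup_\alpha|V_h^\alpha - V^\alpha|\le C_h$ or control of the drifting discrete minimizers $\hat\alpha_h$; your reduction via $|\inf_\alpha V_h^\alpha - \inf_\alpha V^\alpha|\le \sup_\alpha|V_h^\alpha - V^\alpha|$ is the standard and correct way to close this. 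What your argument buys over the paper's is an honest treatment of the inf--lim interchange; what it costs is the burden of verifying uniformity inside Theorem~\ref{thm:conv_formal}.

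One caveat on that verification: your claim that the constants in Theorem~\ref{thm:conv_formal} ``do not depend on the particular policy'' is not quite accurate as the proof is written. The error constant there is $\tilde L = \max\{K_x, K_\alpha C_\alpha\}$, and the strong Euler--Maruyama error constant likewise depends on the Lipschitz constant of $x\mapsto b(x,\alpha(x))$; both involve $C_\alpha$, the Lipschitz constant of the policy itself. So the uniformity you need holds only over a class of policies with a common Lipschitz bound $C_\alpha \le \bar C$ (which is the natural admissible class here, and suffices provided the discrete near-minimizers $\hat\alpha_h$ can be taken in that class). You flag that you would check this, so the plan is sound, but the statement as written should be weakened to ``uniformly over uniformly Lipschitz policies'' rather than over all admissible $\alpha$.
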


\subsection{Q-functions}
Following the context of asymptotic MFG (\ref{opt_prob-mfg}) introduced in \cite{angiuli2022unified},  the discrete time Q-function (i.e., state-action value function) and optimal Q-function are defined as
\begin{equation}\label{MFG_Q_h}
\begin{aligned}
&Q_{h,\text{MFG}}^{\alpha,\mu} (x,a)
         :=  \mathbb{E}\left[ h \sum_{k=0}^\infty e^{-k\gamma h} f(X_{k}^{\alpha,\mu}, \alpha_{k}, \mu ) \bigg\vert X_0 =x, \alpha_0 =a \right],\\
&Q^{\mu}_{h,\text{MFG}}(x,a) :
         = \inf_{ \alpha }  
         Q_{h,\text{MFG}}^{\alpha,\mu} (x,a).
\end{aligned}
\end{equation}
Optimizing over initial actions, we have that
$V_{h,\text{MFG}}^{\mu}(x) = \inf_{a} Q_{h,\text{MFG}}^{\mu}(x,a)$ for any  $x\in \mathcal{X}$. Moreover, as $\mu$ is fixed, by  \cite[Equation (3.20)]{sutton2018reinforcement}, $Q_{h,\text{MFG}}^{\mu}$ satisfies the Bellman equation
\begin{equation}\label{eqn:MFG_Bellman}
    Q_{h,\text{MFG}}^{\mu}(x,a) = h f(x,a,\mu) + e^{-\gamma h} \sum_{x'\in\mathcal{X}} p(x'\mid x,a,\mu) \inf_{a'}Q_{h,\text{MFG}}^{\mu}(x',a').
\end{equation}

On the other hand, the case of MFC is more complicated.  In the context of asymptotic MFC (\ref{opt_prob-mfc}), considering $\mu^\alpha$ to be the limiting distribution of the process $X_t^{\alpha}$ for an admissible policy $\alpha$, the discrete time modified Q-function introduced in \cite{angiuli2022unified} is defined as
\begin{equation}\label{MFC_Q_h}
         Q_{h, \text{MFC}}^{\alpha} (x,a)
         := h f(x,a, \mu^{\tilde{\alpha}})+\mathbb{E}\left[ h \sum_{k=1}^\infty e^{-k\gamma h} f(X_k^\alpha, \alpha_k,  \mu^{\alpha} ) \bigg\vert X_0 =x, \alpha_0 =a \right],
\end{equation} 
where
\begin{equation}\label{policy}
\mu^{\alpha}=\lim_{k\to\infty}\cL[X_k^{\alpha}],\quad \quad \tilde{\alpha}(s)=  \begin{cases}
\alpha(s),\quad \text{if}~~ s\neq x\\
a, \quad \quad ~ \text{if}~~ s=x.
\end{cases}
\end{equation}
%
We mention that $\tilde \alpha$ is devised in such a form (\ref{policy}) in order to achieve policy improvement  \cite[Theorem 4 in Appendix C]{angiuli2022unified}. Then the optimal  Q-function $Q_{h, \text{MFC}}$ is
\begin{equation}
    Q_{h, \text{MFC}}(x,a)
         = \inf_{ \alpha }  
         Q_{h, \text{MFC}}^{\alpha} (x,a),
\end{equation}
which satisfies the Bellman equation
\begin{equation}\label{modified_Q_Bellman}
    Q_{h,\text{MFC}}(x,a)= h f(x,a,\tilde{\mu}^* )
		+ e^{-\gamma h} \sum_{x'\in\mathcal{X}}p(x'\mid x,a,\tilde{\mu}^*)\inf_{a'} Q_{h,\text{MFC}}(x',a'),
\end{equation}
for each $(x,a)\in\mathcal{X}\times \mathcal{A}$.
The optimal control $\alpha^*(x) = \argmin_{a\in\mathcal{A}} Q_{h,\text{MFC}}(x,a)$, and the control $\tilde{\alpha}^*$ is also defined as in (\ref{policy}). The modified population distribution $\tilde{\mu}^*$ is based on $\tilde{\alpha}^*$ in the sense that $\tilde{\mu}^*= \mu^{\tilde{\alpha}^*}$. Let us summarize the discrete time Q-function results for asymptotic MFC as follows.
\begin{thm}[\cite{angiuli2022unified}, Appendix C]
The Bellman equation for the discrete time Q-function $Q_{h, \text{MFC}}^{\alpha}$ is 
\begin{equation}\label{dec00}
    Q_{h, \text{MFC}}^{\alpha} (x,a) = h f(x,a, \mu^{\tilde\alpha}) + e^{-\gamma h} \mathbb{E}\left[Q_{h, \text{MFC}}^{\alpha}(X_1, \alpha(X_1))|X_0=x, \alpha_0= a\right]
\end{equation}
with $\tilde\alpha$ defined as in \eqref{policy}. 
Moreover, for any $x\in\mathcal{X}$, the value function is equivalent to the Q-function with the policy $\alpha$ in the form of
\begin{equation}\label{dec02}
     V_{h, \text{MFC}}^{\alpha}(x) = Q_{h, \text{MFC}}^{\alpha}(x, \alpha(x)).
\end{equation}
The optimal Q-function $Q_{h, \text{MFC}}(x,a)
         = \inf_{ \alpha}  
         Q_{h, \text{MFC}}^{\alpha} (x,a)$
satisfies the Bellman equation
\begin{equation}
    Q_{h,\text{MFC}}(x,a)= h f(x,a,\tilde{\mu}^* )
		+ e^{-\gamma h} \sum_{x'\in\mathcal{X}}p(x'\mid x,a,\tilde{\mu}^*)\inf_{a'} Q_{h, \text{MFC}}(x',a'),
\end{equation}
with $\tilde{\mu}^*= \mu^{\tilde{\alpha}^*}$ and $\tilde{\alpha}^*$ being the modified control (\ref{policy}) of the optimal control $\alpha^*$.
\end{thm}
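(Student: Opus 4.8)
The plan is to establish the three assertions in sequence, taking the recursion (\ref{dec00}) as the cornerstone and deriving (\ref{dec02}) and the optimal Bellman equation as consequences. For (\ref{dec00}) I would start directly from the definition (\ref{MFC_Q_h}), keep the leading term $h f(x,a,\mu^{\tilde\alpha})$ intact, and peel one step from the tail $\sum_{k\ge 1}$. Factoring $e^{-\gamma h}$ out of the tail and reindexing $j=k-1$ rewrites it as a discounted cost accrued from time $1$ onward along a trajectory that follows $\alpha$, with $X_1\sim p(\cdot\mid x,a,\mu^{\tilde\alpha})$ by the construction that drives the first transition. Conditioning on $X_1$ and using the Markov property together with the tower property identifies the inner conditional expectation with $Q_{h,\text{MFC}}^{\alpha}(X_1,\alpha(X_1))$, since conditioned on $X_1=x_1$ the law of the future depends only on $x_1$ and the policy $\alpha$, not on the initial deviation $a$. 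The one nontrivial bookkeeping point is the population argument: the tail evaluates $f$ at $\mu^{\alpha}$, and this must match the leading distribution inside $Q_{h,\text{MFC}}^{\alpha}(X_1,\alpha(X_1))$. This is exactly where the device (\ref{policy}) pays off—modifying $\alpha$ at state $X_1$ to the value $\alpha(X_1)$ leaves $\alpha$ unchanged, so the modified policy coincides with $\alpha$ and $\mu^{\tilde\alpha}=\mu^{\alpha}$ at that state. Hence the recursion closes.

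The identity (\ref{dec02}) is then essentially a specialization of the definition. Setting $a=\alpha(x)$ in (\ref{MFC_Q_h}), the modified policy $\tilde\alpha$ again collapses to $\alpha$, so the leading cost becomes $h f(x,\alpha(x),\mu^{\alpha})$ and merges with the $k\ge 1$ tail (which already uses $\mu^{\alpha}$) into a single sum $h\sum_{k\ge 0}e^{-k\gamma h} f(X_k^{\alpha},\alpha_k,\mu^{\alpha})$ with $\alpha_0=\alpha(x)$. Matching this against the definition (\ref{Vh_MFC}) of $V_{h,\text{MFC}}^{\alpha}(x)$ yields the claim; the only thing to verify is that conditioning on $X_0=x$ with $\alpha_0=\alpha(X_0)$ reproduces the unconstrained trajectory under $\alpha$, which is immediate because $\alpha$ is Markovian and $\mu^{\alpha}=\lim_k \cL[X_k^{\alpha}]$ depends on the policy alone.

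For the optimal Bellman equation I would pass from $Q_{h,\text{MFC}}^{\alpha}$ to $Q_{h,\text{MFC}}=\inf_\alpha Q_{h,\text{MFC}}^{\alpha}$ via the dynamic programming principle. Writing the conditional expectation in (\ref{dec00}) as an explicit sum against $p(\cdot\mid x,a,\mu^{\tilde\alpha})$ and substituting (\ref{dec02}) turns the right-hand side into $h f(x,a,\mu^{\tilde\alpha}) + e^{-\gamma h}\sum_{x'}p(x'\mid x,a,\mu^{\tilde\alpha})\,V_{h,\text{MFC}}^{\alpha}(x')$. Introducing the greedy policy $\alpha^*(x)=\argmin_{a}Q_{h,\text{MFC}}(x,a)$ and its modification $\tilde\alpha^*$ from (\ref{policy}), the tail reduces to $\inf_{a'}Q_{h,\text{MFC}}(x',a')=V_{h,\text{MFC}}(x')$ and the relevant distribution becomes $\tilde\mu^*=\mu^{\tilde\alpha^*}$.

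The main obstacle is precisely this last step. Unlike the MFG Bellman equation (\ref{eqn:MFG_Bellman}), where $\mu$ is frozen and the infimum commutes with the one-step decomposition, in MFC the act of optimizing the first action perturbs the stationary population distribution, so one cannot naively interchange $\inf_\alpha$ with the recursion. Justifying that the greedy $\alpha^*$ attains the infimum and that the self-consistent first-step distribution is exactly $\tilde\mu^*=\mu^{\tilde\alpha^*}$ requires the policy-improvement property of \cite[Theorem 4, Appendix C]{angiuli2022unified}; indeed the modified Q-function (\ref{MFC_Q_h}) is engineered so that a one-state deviation is correctly priced by $\mu^{\tilde\alpha}$, which is what makes the improvement monotone and the fixed-point distribution consistent. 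I would invoke that property rather than re-derive it, and close the argument by verifying that the greedy policy is a fixed point of the improvement operator, so that $Q_{h,\text{MFC}}$ solves the stated equation.
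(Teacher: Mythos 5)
Your proposal is correct and follows essentially the same route as the paper: the recursion (\ref{dec00}) via the tower property with one step peeled off (the paper's last equality implicitly uses exactly your observation that modifying $\alpha$ at $X_1$ to the value $\alpha(X_1)$ returns $\alpha$ itself, so $\mu^{\tilde\alpha}=\mu^{\alpha}$ there), and (\ref{dec02}) by specializing the definition at $a=\alpha(x)$. For the optimal Bellman equation the paper simply delegates to \cite[Appendix C]{angiuli2022unified}, which is in substance what you do by invoking the policy-improvement theorem there.
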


\begin{proof}
All proofs can be found in \cite[Appendix C]{angiuli2022unified}. We review the proofs of the first two statements here and delegate the last one to the reference.

\cite[Appendix C, Theorem 3]{angiuli2022unified} : By the tower property, the definition of (\ref{MFC_Q_h}) gives 
\begin{equation*}
    \begin{aligned}
 &Q_{h, \text{MFC}}^{\alpha} (x,a)=  h f(x,a, \mu^{\tilde{\alpha}})+e^{-\gamma h}\mathbb{E}\left[\mathbb{E} \left[ h \sum_{k=1}^\infty e^{-(k-1)\gamma h} f(X_k, \alpha_k, \mu^{\alpha})\bigg\vert X_1\right] \bigg\vert X_0 =x, \alpha_0 =a \right] \\
 &=h f(x,a, \mu^{\tilde{\alpha}})\\
 &\quad +e^{-\gamma h}\mathbb{E}\left[ hf(X_1, \alpha(X_1), \mu^{\alpha})+ e^{-\gamma h}\mathbb{E} \left[ h \sum_{k=2}^\infty e^{-(k-2)\gamma h} f(X_k, \alpha_k, \mu^{\alpha})\bigg\vert X_1\right]\bigg\vert X_0 =x, \alpha_0 =a \right] \\
 &= h f(x,a, \mu^{\tilde{\alpha}}) + e^{-\gamma h} \mathbb{E}\left[Q_{h, \text{MFC}}^{\alpha}(X_1, \alpha(X_1))|X_0=x, \alpha_0 =a\right].
\end{aligned}
\end{equation*}

\cite[Appendix C, Lemma 3]{angiuli2022unified}: By the form of modified control \eqref{policy}, the discrete value function can be written as
\begin{equation*}
    \begin{aligned}
        V_{h, \text{MFC}}^{\alpha} (x)&= h f(x,\alpha(x),\mu^{\tilde \alpha}) + 
     \mathbb{E}\left[ h \sum_{k=1}^\infty e^{-k\gamma h} f(X_{k}, \alpha_{k}, \mu^{\alpha})  \bigg\vert X_0 =x, \alpha_0=\alpha(x)\right]\\
     &=Q_{h, \text{MFC}}^{\alpha} (x,\alpha(x)).
    \end{aligned}
\end{equation*}

\end{proof}

\subsection{Approximation results for value functions}
The Q-function is ill-posed for the continuous-time MDP \citep{tallec19a}, as it becomes independent of actions when $h\to 0$. However, one can measure the difference between discrete value functions and Q-functions in terms of $h$, when the control $\alpha$ is fixed.
Such a distance measure result can be found in \cite[Theorem 2]{tallec19a} for MFG problems when the state is driven by the deterministic differential equation. Here, we provide similar results for both MFG and MFC under the McKean-Vlasov dynamics control, based on formulations (\ref{eqn:MFG_Bellman}), (\ref{dec00}),  and (\ref{dec02}).
    
\begin{thm}[Difference between $Q_{h}^{\alpha}$ and $V_{h}^{\alpha}$]\label{thm:dist_MFG}
Let $\one_x$ be the unit point mass probability distribution over $\mathcal{X}$. Consider the infinitesimal generator $G$ given by 
\begin{equation}\label{discrete_SDE}
    G(\cdot \mid x,a,\mu) = \lim_{h\to 0}\frac{p_h(\cdot \mid x,a,\mu)-\one_x}{h}
\end{equation}
being uniformly bounded for all $(x',x,a,\mu)\in \mathcal{X}\times\mathcal{X}\times \mathcal{A}\times \mathcal{P}(\mathcal{X})$, and $p_h(\cdot \mid x,a,\mu)$ is the one-step transition probability with respect to the time step $h$. If $f$ is uniformly bounded over $\mathcal{X}\times\mathcal{A}\times\mathcal{P}(\mathcal{X})$, following the one-step McKean-Vlasov dynamics $x'\sim p_h(\cdot \mid x,a,\mu)$,  we have that
\begin{equation}
\begin{aligned}
     &Q_{h,\text{MFG}}^{\mu,\alpha}(x,a)  =  V_{h,\text{MFG}}^{\mu,\alpha}(x) + O(h),\\
       &Q_{h,\text{MFC}}^{\alpha}(x,a)  = V_{h,\text{MFC}}^{\alpha}(x)  + O(h),
       \end{aligned}
\end{equation}
with sufficiently small $h>0$, for every $(x,a)\in \mathcal{X}\times\mathcal{A}$. 
\end{thm}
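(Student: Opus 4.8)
The plan is to reduce both estimates to a single one-step Bellman comparison in which the two quantities being compared share the \emph{same} continuation value function, so that only the first transition step distinguishes them. The three ingredients I would use are: the policy-evaluation (tower-property) decomposition behind (\ref{dec00}); the uniform-in-$h$ boundedness of the value/Q-functions coming from bounded $f$ and geometric discounting; and the generator expansion implicit in (\ref{discrete_SDE}).

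For the MFG case, the tower-property computation behind (\ref{dec00}) yields, for the fixed policy $\alpha$, the policy-evaluation Bellman identity (writing $V(\cdot):=V_{h,\text{MFG}}^{\mu,\alpha}(\cdot)=Q_{h,\text{MFG}}^{\alpha,\mu}(\cdot,\alpha(\cdot))$)
\begin{equation*}
Q_{h,\text{MFG}}^{\alpha,\mu}(x,a) = h f(x,a,\mu) + e^{-\gamma h}\sum_{x'\in\mathcal{X}} p_h(x'\mid x,a,\mu)\, V(x'),
\end{equation*}
so that, taking $a=\alpha(x)$ to recover $V(x)$ and subtracting,
\begin{equation*}
Q_{h,\text{MFG}}^{\alpha,\mu}(x,a) - V(x) = h\big[f(x,a,\mu)-f(x,\alpha(x),\mu)\big] + e^{-\gamma h}\sum_{x'}\big[p_h(x'\mid x,a,\mu)-p_h(x'\mid x,\alpha(x),\mu)\big] V(x').
\end{equation*}
The first bracket is $O(h)$ because $f$ is uniformly bounded. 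For the second term I would first record the uniform bound $\|V\|_\infty \le h\sum_{k\ge0}e^{-k\gamma h}\|f\|_\infty = \tfrac{h}{1-e^{-\gamma h}}\|f\|_\infty$, which stays bounded (by roughly $\|f\|_\infty/\gamma$) as $h\to0$. Then I would invoke the expansion $p_h(\cdot\mid x,a,\mu)=\one_x(\cdot)+h\,G(\cdot\mid x,a,\mu)+o(h)$ coming from (\ref{discrete_SDE}); the point mass $\one_x$ is independent of $(a,\mu)$ and therefore cancels in the difference, leaving $h\sum_{x'}[G(x'\mid x,a,\mu)-G(x'\mid x,\alpha(x),\mu)]V(x')+o(h)$. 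Uniform boundedness of $G$, boundedness of $V$, and finiteness of $\mathcal{X}$ make this $O(h)$, proving the MFG estimate.

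For the MFC case I would exploit the identity (\ref{dec02}), $V_{h,\text{MFC}}^{\alpha}(x)=Q_{h,\text{MFC}}^{\alpha}(x,\alpha(x))$, so that the claim becomes $Q_{h,\text{MFC}}^{\alpha}(x,a)-Q_{h,\text{MFC}}^{\alpha}(x,\alpha(x))=O(h)$. Using the Bellman equation (\ref{dec00}) at both arguments, and noting that at $a=\alpha(x)$ the modified control $\tilde\alpha$ coincides with $\alpha$ so that $\mu^{\tilde\alpha}=\mu^{\alpha}$, the difference splits exactly as before into an immediate-cost term $h[f(x,a,\mu^{\tilde\alpha})-f(x,\alpha(x),\mu^{\alpha})]$ and a continuation term. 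The immediate term is $O(h)$ by boundedness of $f$, and the continuation term, now comparing $p_h(\cdot\mid x,a,\mu^{\tilde\alpha})$ against $p_h(\cdot\mid x,\alpha(x),\mu^{\alpha})$ paired with the common bounded function $x'\mapsto Q_{h,\text{MFC}}^{\alpha}(x',\alpha(x'))=V_{h,\text{MFC}}^{\alpha}(x')$, is treated by the same $\one_x$-cancellation and generator expansion.

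The step I expect to be the main obstacle is controlling the continuation term in the MFC case. Here the two transition kernels differ not only in their action slot but also in the population measure ($\mu^{\tilde\alpha}$ versus $\mu^{\alpha}$), and changing the policy at the single state $x$ may shift the limiting distribution by an $O(1)$ amount, so one cannot argue that the two kernels are close. The resolution is that closeness is never needed: the generator expansion makes the leading $\one_x$ terms cancel regardless of the measure arguments, and the residual $O(h)$ bound uses only the \emph{uniform} boundedness of $G$ over all $(x',x,a,\mu)$ and of $f$, together with the uniform-in-$h$ boundedness of the value/Q-functions. The one technical point to make rigorous is that the $o(h)$ remainder in the generator expansion be uniform in $(x,a,\mu)$; I would secure this from the uniform boundedness hypotheses so that the resulting $O(h)$ holds uniformly over $(x,a)\in\mathcal{X}\times\mathcal{A}$.
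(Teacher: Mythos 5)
Your proposal is correct and rests on the same three ingredients as the paper's proof: the one-step Bellman/tower-property decomposition, the uniform boundedness of the value functions inherited from bounded $f$ and geometric discounting, and the generator expansion $p_h(\cdot\mid x,a,\mu)=\one_x+hG(\cdot\mid x,a,\mu)+\lito(h)$. The only cosmetic difference is that you cancel the $\one_x$ terms by subtracting the Bellman identity evaluated at $a=\alpha(x)$, whereas the paper expands $\mathbb{E}[V(x')]$ directly around $V(x)$ and absorbs the $hf$ and $(e^{-\gamma h}-1)$ contributions into $O(h)$; both routes are equivalent.
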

\begin{proof}
As $f$ is uniformly bounded, $V$ is also uniformly bounded over $\mathcal{X}$ by its formulations.


For MFG, the Bellman equation gives that
\begin{equation*}
\begin{aligned}
    Q_{h, \text{MFG}}^{\mu, \alpha}(x,a) &=  h f(x,a,\mu )
		+ e^{-\gamma h} \mathbb{E}\left[Q_{h, \text{MFG}}^{\mu, \alpha}(X_1, \alpha(X_1))|X_0=x, \alpha_0= a\right]\\
  &=h f(x,a,\mu ) + (1-\gamma h) \mathbb{E}\left[ V_{h,\text{MFG}}^{\mu,\alpha}(x') \right] +O(h^2),
  \end{aligned}
\end{equation*}
with $X_1 =x'$ and sufficiently small $h$. Note that
\begin{equation}
\begin{aligned}
    \mathbb{E}\left[ V_{h,\text{MFG}}^{\mu,\alpha}(x') \right]&= \sum_{x'} V_{h,\text{MFG}}^{\mu,\alpha}(x')p_h(x'\mid x,a,\mu) \\
    &=V_{h,\text{MFG}}^{\mu,\alpha}(x)+\sum_{x'} V_{h,\text{MFG}}^{\mu,\alpha}(x')  (p_h(x'\mid x,a,\mu)-\one_x)\\
    &= V_{h,\text{MFG}}^{\mu,\alpha}(x)+\sum_{x'} V_{h,\text{MFG}}^{\mu,\alpha}(x') G(x'\mid x,a,\mu) h +\lito(h)=V_{h,\text{MFG}}^{\mu,\alpha}(x)+O(h)
    \end{aligned}
\end{equation}
as the generator $G$ is uniformly bounded and $\mathcal{X}$ is finite.
Therefore, by replacing $ \mathbb{E}\left[V_{h, MFG}^{\mu,\alpha}(x') \right]$ in the Bellman equation, we get
\begin{equation*}
Q^{\mu,\alpha}_{h,\text{MFG}}(x,a)  = h f(x,a,\mu )
		+ (1-\gamma h) \big( V^{\mu,\alpha}_{h, \text{MFG}}(x)+O(h)\big)+O(h^2) = V^{\mu,\alpha}_{h,\text{MFG}}(x)+ O(h),
\end{equation*}
for sufficiently small $h$. The estimate for MFC is similar by just replacing $\mu$ by the limiting distribution $\mu^{\alpha}= \lim_{k\to \infty}\mathcal{P}[X_k^{\alpha}]$ under an admissible policy $\alpha$. The Bellman equation (\ref{dec00}) combined with (\ref{dec02}) gives 
\begin{equation}\label{feb06}
\begin{aligned}
     Q_{h, \text{MFC}}^{\alpha} (x,a) &= h f(x,a, \mu^{\tilde\alpha}) + e^{-\gamma h} \mathbb{E}\left[V_{h, \text{MFC}}^{\alpha}(x')|X_0=x, \alpha_0= a\right]\\
      &=h f(x,a,\mu^{\tilde\alpha}) + (1-\gamma h) \mathbb{E}\left[ V_{h,\text{MFC}}^{\alpha}(x') \right] +O(h^2),
      \end{aligned}
\end{equation}
for sufficiently small $h$. Since
\begin{equation}
\begin{aligned}
    \mathbb{E}\left[ V_{h,\text{MFC}}^{\mu,\alpha}(x') \right]&= \sum_{x'} V_{h,\text{MFC}}^{\mu,\alpha}(x')p_h(x'\mid x,a,\mu^\alpha)\\
    &= V_{h,\text{MFC}}^{\mu,\alpha}(x)+\sum_{x'} V_{h,\text{MFC}}^{\mu,\alpha}(x')  G(x'\mid x,a,\mu^\alpha) h +\lito(h)=V_{h,\text{MFC}}^{\mu,\alpha}(x)+O(h),
    \end{aligned}
\end{equation}
then the Bellman equation gives that
\begin{equation*}
  Q^{\alpha}_{h,\text{MFC}}(x,a)  = h f(x,a,\mu^{\tilde\alpha} )
		+ (1-\gamma h) \big( V^{\alpha}_{h,\text{MFC}} (x)+O(h)\big)+O(h^2) = V^{\alpha}_{h,\text{MFC}}(x) + O(h)
\end{equation*}
for sufficiently small $h$.
\end{proof}

\begin{corollary}
For discrete optimal value functions and Q-functions, by taking the infimum over all admissible policies $\alpha$, we have that
\begin{equation}
     Q_{h,\text{MFG}}^{\mu}(x,a)  =  V_{h,\text{MFG}}^{\mu}(x) + O(h),
\end{equation}
and
\begin{equation}
     Q_{h,\text{MFC}}(x,a)  = V_{h,\text{MFC}}(x)  + O(h),
\end{equation}
for sufficiently small $h>0$, for every $(x,a)\in \mathcal{X}\times \mathcal{A}$.
\end{corollary}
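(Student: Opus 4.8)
The plan is to deduce this corollary directly from Theorem~\ref{thm:dist_MFG} by passing to the infimum over admissible policies; the only genuine issue is that the $O(h)$ error in Theorem~\ref{thm:dist_MFG} must be \emph{uniform} in $\alpha$. First I would revisit the proof of that theorem and verify that the implicit constant does not depend on the policy. The error terms there arise from the Taylor expansion $e^{-\gamma h} = 1-\gamma h + O(h^2)$, the generator expansion $\mathbb{E}[V_h^{\cdot,\alpha}(x')] = V_h^{\cdot,\alpha}(x) + \sum_{x'} V_h^{\cdot,\alpha}(x')\,G(x'\mid x,a,\mu)\,h + o(h)$, and the uniform boundedness of the value function. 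Each resulting constant is controlled by $\|f\|_\infty$ and by the uniform bound on $G$ over all of $\mathcal{X}\times\mathcal{X}\times\mathcal{A}\times\mathcal{P}(\mathcal{X})$, neither of which involves $\alpha$; moreover $\|V_h^{\cdot,\alpha}\|_\infty \le \|f\|_\infty\, h/(1-e^{-\gamma h})$ is bounded uniformly in $\alpha$ as $h\to 0$. Hence there is a single constant $C$, independent of $\alpha$, $x$, and $a$, such that
\[
\bigl| Q_h^{\cdot,\alpha}(x,a) - V_h^{\cdot,\alpha}(x)\bigr| \le C h
\]
for every admissible $\alpha$, in both the MFG and MFC cases. (Alternatively, since $\mathcal{X}$ and $\mathcal{A}$ are finite the set of stationary deterministic policies is finite, so uniformity is automatic by taking the maximum of finitely many constants.)

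Next I would invoke the elementary fact that the infimum is $1$-Lipschitz in the sup-metric: if $|g(\alpha)-\tilde g(\alpha)|\le Ch$ for every $\alpha$, then $|\inf_\alpha g(\alpha) - \inf_\alpha \tilde g(\alpha)|\le Ch$. Applying this with $g(\alpha)=Q_h^{\cdot,\alpha}(x,a)$ and $\tilde g(\alpha)=V_h^{\cdot,\alpha}(x)$, and recalling the definitions $Q_h^{\cdot}(x,a)=\inf_\alpha Q_h^{\cdot,\alpha}(x,a)$ and $V_h^{\cdot}(x)=\inf_\alpha V_h^{\cdot,\alpha}(x)$, immediately yields
\[
\bigl| Q_{h,\text{MFG}}^{\mu}(x,a) - V_{h,\text{MFG}}^{\mu}(x)\bigr| \le Ch, \qquad
\bigl| Q_{h,\text{MFC}}(x,a) - V_{h,\text{MFC}}(x)\bigr| \le Ch,
\]
which is the claimed estimate for every $(x,a)\in\mathcal{X}\times\mathcal{A}$.

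The main obstacle is precisely the uniformity in $\alpha$ noted above: Theorem~\ref{thm:dist_MFG} is stated for each fixed policy, and a naive reading supplies only a policy-dependent constant $C_\alpha$. The MFC case deserves extra care, since each $\alpha$ induces its own limiting law $\mu^{\alpha}$ (and modified law $\mu^{\tilde\alpha}$) entering both $Q_{h,\text{MFC}}^{\alpha}$ and $V_{h,\text{MFC}}^{\alpha}$; the uniformity still holds because the standing hypotheses bound $f$ and $G$ uniformly over the entire simplex $\mathcal{P}(\mathcal{X})$, so these induced distributions incur no extra cost. Once uniformity is secured, the passage to the optimum is nothing more than the Lipschitz property of the infimum and needs no further estimates.
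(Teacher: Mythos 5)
Your proof is correct and takes essentially the same route the paper intends: the paper justifies this corollary with the single phrase ``by taking the infimum over all admissible policies,'' and your argument supplies precisely the two ingredients that assertion implicitly relies on, namely uniformity in $\alpha$ of the $O(h)$ constant from Theorem~\ref{thm:dist_MFG} (which holds because that constant is controlled only by $\|f\|_\infty$, the uniform bound on the generator $G$, and the policy-independent bound $\|V_h^{\cdot,\alpha}\|_\infty \le \|f\|_\infty\, h/(1-e^{-\gamma h})$) together with the $1$-Lipschitz property of the infimum in the sup-metric. Nothing further is needed.
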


\section{Two-timescale Q-learning algorithm}\label{sec:alg}
Given the discrete time Q-functions and the associated Bellman equations formulated in the previous section, we first recall the two-timescale Q-learning algorithm introduced in \cite{angiuli2022unified}. We will take the continuous-time approximation of the algorithm, and analyze its different fixed point solutions in both MFG and MFC regimes. Then we construct a toy one-dimensional example in which explicit fixed point solutions can be obtained under different learning rates ratios. In the end we validate our findings for the example by numerical simulations.

This iterative procedure, starting from some initial guess $(Q_0, \mu_0)$, updates both variables at each iteration $k$ with different learning rates, $\rho_k^{Q}>0$ and $\rho_k^\mu>0$:
\begin{equation}\label{eqn:alg}
    \begin{aligned}
    \mu_{k+1} &= \mu_k+\rho_k^{\mu} \, \mathcal{P}(Q_k, \mu_k),\\
    Q_{k+1} &= Q_k + \rho_k^{Q} \, \mathcal{T}(Q_k, \mu_k),
    \end{aligned}
\end{equation}
with operators 
\begin{equation}
    \begin{aligned}
       \mathcal{P}(Q, \mu)(x) &= (\mu P^{Q,\mu})(x)-\mu(x),~~\text{for}~x\in\mathcal{X},\\
       (\mu P^{Q,\mu})(x) & = \sum_{x_0} \mu(x_0) P^{Q,\mu}(x_0,x), 
       \quad P^{Q,\mu}(x,x') = p(x'\mid x,\argmin_a Q(x,a),\mu),
       \end{aligned}
\end{equation}
and
\begin{equation}
      \begin{aligned}
   \mathcal{T}(Q, \mu)(x,a)&= hf(x,a,\mu)+e^{-\gamma h} \sum_{x'}   p(x'\mid x, a,\mu)\min_{a'} Q(x',a')-Q(x,a),~~\text{for}~(x,a)\in \mathcal{X}\times \mathcal{A}.
    \end{aligned}
\end{equation}
When (\ref{eqn:alg}) converges to a stationary point $(Q_h^*, \tilde{\mu}^*)$, this stationary point satisfies a fixed-point equation (cf.~the Bellman equation 
\eqref{modified_Q_Bellman}): for all $(x,a)\in\mathcal{X}\times\mathcal{A}$,
\begin{equation}\label{eqn:soln}
\begin{aligned}
    \tilde{\mu}^*(x) &= \tilde{\mu}^* P^{Q^*, \tilde{\mu}^*}(x),\\
    Q_h^*(x,a) &=hf(x,a,\tilde{\mu}^*)+e^{-\gamma h} \sum_{x'}   p(x'\mid x, a,\tilde{\mu}^*)\min_{a'} Q_h^*(x',a').
\end{aligned}
\end{equation}
This two-timescale approach can converge to different limiting points by simply tuning two learning rates. Following the idea of \cite{borkar1997stochastic, borkar2008stochastic}, if $\eps := \rho^{\mu}/\rho^{Q} \ll 1$, the numerical updates \eqref{eqn:alg} can be approximated by a system of two-timescale ordinary differential equations (ODEs) 
\begin{equation}
    \begin{aligned}
    \frac{d}{d t} \mu_t &= \mathcal{P}(Q_t, \mu_t),\\
    \frac{d}{d t} Q_t  &= \frac{1}{\eps} \mathcal{T}(Q_t, \mu_t).
    \end{aligned}
\end{equation}
As $\eps \to 0$, $\mu_t$ changes much slower than $Q_t$. So for the consideration of dynamics of $Q_t$, we can treat $\mu_t$ as frozen $\mu_t \equiv\mu$, and then the stable equilibrium point satisfies $\mathcal{T}(Q_{\mu}, \mu)=0$. We use the notation $Q_{\mu}$ as the equilibrium point depending on $\mu$. Moreover, by assuming $Q_\mu$ is Lipschitz continuous in $\mu$ (a verification can be found in estimates in Section \ref{sec:conv}, such like (\ref{mar26_lip}) using the Lipschitz continuity assumptions of $f, p$). 
Given the pair $(Q_{\mu},\mu)$, we then consider to solve 
$  \frac{d}{d t}  \mu_t = \mathcal{P}(Q_{\mu_t},\mu_t)$,  
which gives the eventual solution that satisfies $\mathcal{P}(Q_{\mu_\infty},\mu_\infty)=0$. From the stable equilibrium solution $(Q_{\mu_\infty},\mu_\infty)$, 
the derived $\mu_{\infty}$ and optimal control $\hat\alpha$ that minimizes $Q_{\mu_\infty}$ form a Nash equilibrium of MFG. Therefore, we call $\rho^{\mu}\ll\rho^{Q}$ the MFG regime.

On the other hand, when $\rho^Q\ll \rho^{\mu}$, we take the ratio $\rho^{Q}/\rho^{\mu}$ to be of order $\eps\ll 1$, then by a similar strategy, we have the approximated system of ODEs
\begin{equation}
    \begin{aligned}
    \frac{d}{d t} \mu_t &= \frac{1}{\eps} \mathcal{P}(Q_t, \mu_t),\\
    \frac{d}{d t} Q_t  &= \mathcal{T}(Q_t, \mu_t).
    \end{aligned}
\end{equation}
As $\eps \to 0$,  $Q_t$ changes much slower than $\mu_t$. We can thus freeze $Q_t\equiv Q$ when the dynamics of $\mu_t$ is concerned, and it leads to the stationary point $\tilde{\mu}_Q$ satisfying $\mathcal{P}(Q, \mu_Q)=0$, where $\mu_Q$ is the asymptotic distribution of a population in which every agent uses the
control $\alpha(x) =\argmin_a Q(x,a)$. We may assume $\mu_Q$ is  Lipschitz continuous in $Q$ (a verification can be found in  Section \ref{sec:conv}, Lemma \ref{lem:mu_and_Q}), and replace $\mu_Q$ by $\tilde{\mu}_Q$ defined with modified policy (\ref{policy}), in order to be consistent with the  previous MFC algorithm setup. Then we consider to solve $\frac{d}{dt} Q_t = \mathcal{T}(Q_t,\tilde{\mu}_{Q_{t}})$,
and it gives the eventual solution satisfying $\mathcal{T}(Q_{\infty}, \tilde{\mu}_{Q_{\infty}})=0$.  
The pair $(Q_{\infty},\tilde{\mu}_{Q_{\infty}})$ solves (\ref{eqn:soln}) for MFC Bellman equation, and thus we call $\rho^{Q}\ll\rho^{\mu}$ the MFC regime. 

\medskip

To better illustrate the above heuristics based on averaging, we consider a simple example here to  illustrate how a two-timescale algorithm can produce  different stationary solutions under different limiting ratios of learning rates. 
Let $Q, \mu$ both be scalar numbers, and for the updates (\ref{eqn:alg}), we consider
\begin{align}\label{eqn:PT}
    \mathcal{P}(Q,\mu) &= (Q-1)(\mu-Q),\\
    \mathcal{T}(Q,\mu)&=-(\mu-\frac12)(\mu -Q+1).
\end{align}
The Jacobian matrix is thus given by 
\begin{equation}
    J(Q, \mu) = 
    \begin{bmatrix}
           \frac{\partial \mathcal{P}}{\partial \mu} &   \frac{\partial \mathcal{P}}{\partial Q}\\
           \frac{\partial \mathcal{T}}{\partial \mu} & 
           \frac{\partial \mathcal{T}}{\partial Q}
    \end{bmatrix} =
    \left[
       \begin{array}{cc}
           Q-1 &   \mu -2Q+1\\
           -2\mu +Q -\frac12 & 
           \mu -\frac12
       \end{array}
        \right].
    \end{equation}
When $\rho^{\mu}\ll \rho^{Q}$, we  consider the approximate continuous-time ODEs
\begin{equation}\label{eq:Qeps}
    \begin{aligned}
    \frac{d}{d t} \mu_t &= \mathcal{P}(Q_t, \mu_t),\\
    \frac{d}{d t} Q_t  &= \frac{1}{\eps } \mathcal{T}(Q_t, \mu_t).
    \end{aligned}
\end{equation}
As $\eps \to 0$, $\mu_t$ can be assumed to be fixed. We first solves $\mathcal{T}(Q_{\mu}, \mu)=0$ and  obtain $Q_{\mu}= \mu + 1$. With such $Q_\mu$ plugged in to solve $\mathcal{P}(Q_{\mu_\infty},\mu_\infty)=0$, we obtain the fixed point solution to be  $(Q_{\mu_\infty},\mu_\infty) = (1, 0)$.

On the other hand, when $\rho^Q\ll \rho^{\mu}$, we have 
\begin{equation}\label{eq:mueps}
    \begin{aligned}
    \frac{d}{d t} \mu_t &= \frac{1}{\eps} \mathcal{P}(Q_t, \mu_t),\\
    \frac{d}{d t} Q_t  &= \mathcal{T}(Q_t, \mu_t).
    \end{aligned}
\end{equation}
As $\eps \to 0$, $Q_t$ can be assumed to be fixed. We thus first solve $\mathcal{P}(Q, \mu_Q)=0$, which gives  $\mu_Q = Q$. Then with such $\mu_Q$ plugged in to solve $\mathcal{T}(Q_{\infty},\mu_{Q_{\infty}})=0$, we obtain that $(Q_{\infty},\mu_{Q_{\infty}}) = (\frac12, \frac12)$, which is different from the one of \eqref{eq:Qeps}.
It is easy to verify that the above two fixed points are both stable, while the system in fact also has a third fixed point $(Q,\mu) = (1, \frac{1}{2})$ which is unstable.
Thus the different ratio of the dynamics serves as a selection mechanism of different stable equilibria.


We present the numerical simulation of the two-timescale algorithm with this toy construction~\eqref{eqn:PT}.
Figure~\ref{fig:toy-example} shows the trajectories of $Q,\mu$ respectively under various initializations and different learning rates.
By setting $\rho^{\mu}=0.001, \rho^Q=1$, $\mu_k$ runs slower than $Q_k$, which corresponds to the scenario~\eqref{eq:Qeps}, 
the algorithm converges to the solution $Q=1,\mu=0$.
On the other hand, by setting $\rho^{\mu}=1, \rho^Q=0.001$ so that $Q_k$ runs slower than $\mu_k$, which corresponds to the scenario~\eqref{eq:mueps}, the algorithm converges to the solution $Q=\frac{1}{2},\mu=\frac{1}{2}$.
We present the results with different initializations $(Q_0,\mu_0)$, and
simulation results show that the two-timescale algorithm is insensitive to initializations and converges to the fixed points determined by the step size ratios.

\begin{figure}[!ht]
\centering 
\begin{subfigure}[t]{0.32\textwidth}
		\centering
		\includegraphics[width=0.99\textwidth]{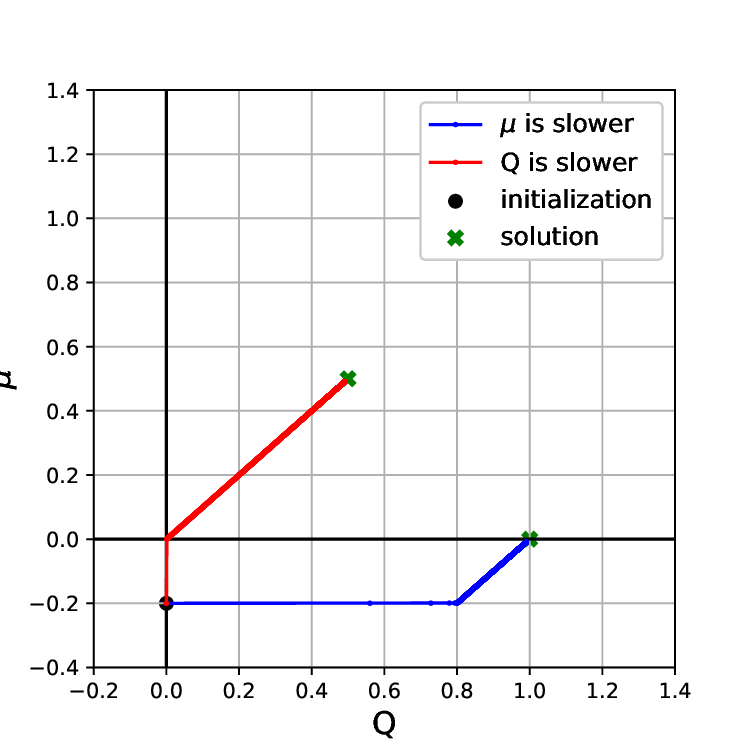}
		\caption{$Q_0=0, \mu_0=-0.2$}
		\label{fig:in1}
	\end{subfigure}
	\begin{subfigure}[t]{0.32\textwidth}
		\centering
		\includegraphics[width=0.99\textwidth]{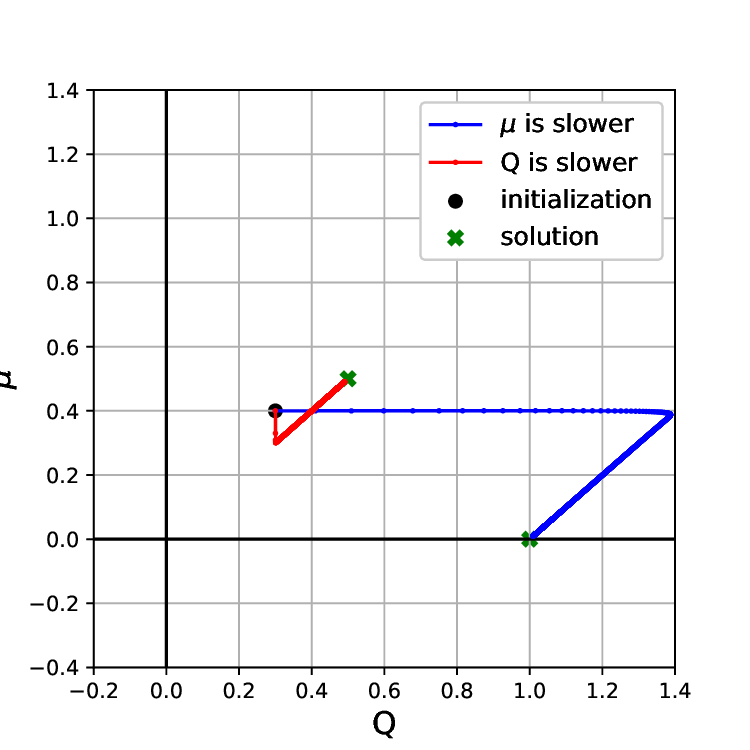}
		\caption{$Q_0=0.3, \mu_0=0.4$}
		\label{fig:in2}
	\end{subfigure}	
        \begin{subfigure}[t]{0.32\textwidth}
		\centering
		\includegraphics[width=0.99\textwidth]{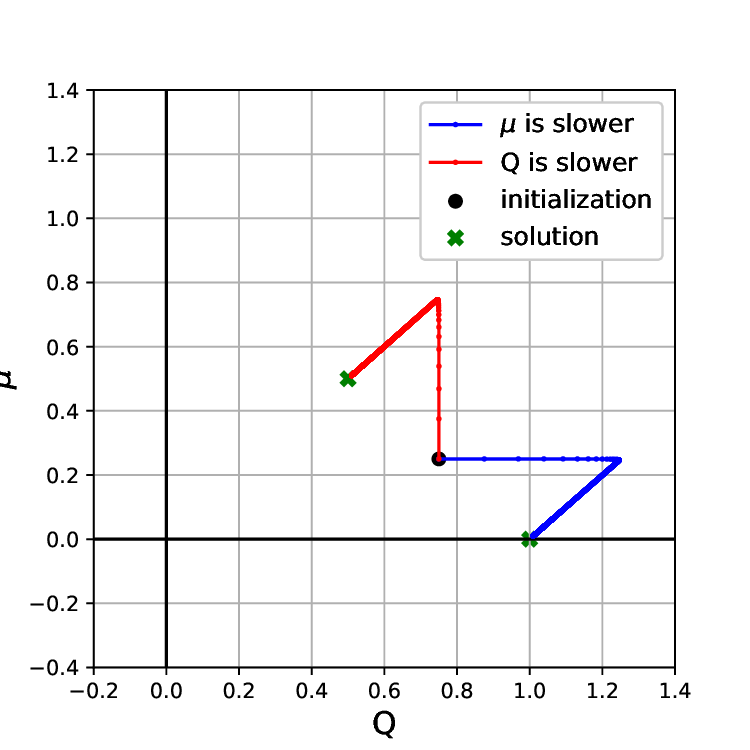}
		\caption{$Q_0=0.75, \mu_0=0.25$}
		\label{fig:in3}
	\end{subfigure}	
 \caption{Visualization of $(Q,\mu)$ trajectories in the two-timescale algorithm: The learning rates are set as $\rho^Q = 1, \rho^{\mu} = 0.001$ so that $\mu$ runs slower than $Q$, and the learning rates are $ \rho^Q = 0.001, \rho^{\mu} = 1$ so that $Q$ runs slower than $\mu$.}	
	\label{fig:toy-example}
\end{figure}

\FloatBarrier

\section{Unified convergence analysis}
\label{sec:conv}
In this section, we provide a unified convergence analysis of the two-timescale Q-learning algorithm (\ref{eqn:alg}) for fixed learning rates $\rho^Q, \rho^\mu>0$ covering all ratios $\rho^Q/\rho^\mu \in (0,\infty)$.

Our approach for establish unified convergence relies on the following Lyapunov function, inspired by the idea of \cite{ZhouLu2023} for the analysis of single-timescale actor-critic method for the linear quadratic regulator problem. The Lyapunov function that we consider is
\begin{equation}\label{lyapunov}
    \mathcal{L}_k : = \mathcal{L}(\mu_k, Q_{h, k}) = W\min_{Q_h^*\in F}\|Q_{h, k}- Q_h^*\|_\infty+\|\mu_k-\tilde \mu_{k}\|_{\text{TV}},
\end{equation}
where $Q_{h,k}, \mu_k$ are numerical updates from the two-timescale Q-learning algorithm (\ref{eqn:alg}), $F$ is a set of fixed points solving the Bellman equation
\begin{equation}\label{eqn:conv_bellman}
    Q_h^*(x,a)=hf(x,a,\tilde{\mu}^*)+e^{-\gamma h} \sum_{x'}   p(x'\mid x, a,\tilde{\mu}^*)\min_{a'} Q_h^*(x',a'),
\end{equation}
coupled with 
\begin{equation}\label{eqn:conv_tildemu}
    \tilde{\mu}^*(x) = \tilde{\mu}^* P^{Q_h^*, \tilde{\mu}^*}(x).
\end{equation}
Here, we abuse the notations $Q_h^*, \tilde{\mu}^*$ to represent fixed points, and they are independent from previous sections. Moreover, $\tilde\mu_k$ is an intermediate equilibrium distribution corresponding to each $Q_{h,k}$ so that 
\begin{equation}\label{eqn:stationary_muk}
    \tilde \mu_k= \tilde \mu_{k} P^{Q_{h,k},\tilde \mu_{k}}.
\end{equation}
The weight parameter $W >0$ is to balance two discrepancies in (\ref{lyapunov}), it will actually be chosen depending on the ratio  $\rho^Q/\rho^\mu$.  
One cannot consider the Q-function update and the distribution update separately, since the operators $\mathcal{P}(Q,\mu)$ and $\mathcal{T}(Q,\mu)$ are highly coupled. 
Therefore, we devise such a Lyapunov function (\ref{lyapunov}) in order to capture the global convergence of $Q$ functions and local convergence of $\mu$ at the same time.  We will establish contraction
of the Lyapunov function $\mathcal{L}_k$ following the algorithm (\ref{eqn:alg}).


\subsection{Assumptions and main result}

For our main result, we need following technical assumptions, which are standard and appear often in the analysis of convergence of Markov processes, see e.g., books like  \cite{meyn2012markov}. 
\begin{assumption}\label{assump:p}
There exist  $0<L_p<1,~ L_Q>0$ such that for all $x\in\mathcal{X}, \mu_1,\mu_2\in\mathcal{P}(\mathcal{X}), Q_1,Q_2$, we have the Lipschitz continuity 
    $$\sum_{x'\in\mathcal{X}}\big|P^{Q_1,\mu_1}(x,x') - P^{Q_2,\mu_2}(x,x')\big|\leq L_p \|\mu_1-\mu_2\|_{\text{TV}} +L_Q \|Q_1-Q_2\|_{\infty}.$$
\end{assumption}



\begin{assumption} (Uniform Doeblin’s condition)\label{assump:doeblin}
We assume that for any bounded $Q$,  the transition probability $
        P^{Q,\mu}(x,x') = p(x' \mid x,\argmin_a Q(x,a),\mu)$ has an equilibrium probability measure $\pi$ that solves $\pi = \pi P^{Q, \pi}$.  There exist a constant $$\beta\in\Big(\frac{1+L_p}{2},1\Big)$$ and probability measure $\nu$ such that
\begin{equation}\label{eq:doeblin} 
    P^{Q,\pi}(x,\cdot)\geq \beta \nu(\cdot),
\end{equation}
for all $x\in\mathcal{X}$.
\end{assumption}

The above assumptions guarantee that the asymptotic distribution is unique for a given $Q$ function (and hence fixed policy):
\begin{proposition}\label{prop:brouwer}
With Assumptions \ref{assump:p} and \ref{assump:doeblin},  for a fixed $Q_{h,k}$, there exists a unique equilibrium distribution solving \eqref{eqn:stationary_muk}. In addition with Assumptions \ref{assump:f}, there exists a pair of fixed points $(Q_h^*, \tilde\mu^*)$ solving \eqref{eqn:conv_bellman} and \eqref{eqn:conv_tildemu}.
\end{proposition}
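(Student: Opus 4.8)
The plan is to treat the two halves of the statement separately, using Brouwer's fixed point theorem for existence and a Doeblin-type contraction for uniqueness.

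\textbf{Uniqueness of $\tilde\mu_k$ for fixed $Q_{h,k}$.} First I would fix $Q = Q_{h,k}$ and study the self-map $T_Q(\mu) = \mu P^{Q,\mu}$ on the probability simplex $\mathcal{P}(\mathcal{X})$. Existence of a fixed point solving \eqref{eqn:stationary_muk} is furnished directly by Assumption \ref{assump:doeblin}; alternatively it follows from Brouwer's theorem, since $\mathcal{P}(\mathcal{X})$ is compact convex and the $\mu$-Lipschitz bound of Assumption \ref{assump:p} makes $\mu \mapsto \mu P^{Q,\mu}$ continuous. For uniqueness, suppose $\mu_1,\mu_2$ both solve \eqref{eqn:stationary_muk}, and decompose
\begin{equation*}
\mu_1 - \mu_2 = (\mu_1-\mu_2)P^{Q,\mu_1} + \mu_2\big(P^{Q,\mu_1}-P^{Q,\mu_2}\big).
\end{equation*}
I would bound the first term by the Dobrushin contraction coefficient: applying the minorization $P^{Q,\mu_1}(x,\cdot)\geq\beta\nu(\cdot)$ at the equilibrium $\pi=\mu_1$ gives $\|(\mu_1-\mu_2)P^{Q,\mu_1}\|_{\text{TV}} \leq (1-\beta)\|\mu_1-\mu_2\|_{\text{TV}}$, and the second term by Assumption \ref{assump:p} (with $Q_1=Q_2$, so only the $L_p$ term survives), yielding $\leq L_p\|\mu_1-\mu_2\|_{\text{TV}}$. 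Summing gives $\|\mu_1-\mu_2\|_{\text{TV}} \leq (1-\beta+L_p)\|\mu_1-\mu_2\|_{\text{TV}}$, and since $\beta > (1+L_p)/2 > L_p$ forces $1-\beta+L_p < 1$, we conclude $\mu_1=\mu_2$.

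\textbf{Existence of the coupled pair.} Next I would decouple \eqref{eqn:conv_bellman}--\eqref{eqn:conv_tildemu} into two single-variable maps. For fixed $\mu$, the Bellman operator $Q \mapsto hf(\cdot,\cdot,\mu) + e^{-\gamma h}\sum_{x'}p(x'\mid\cdot,\cdot,\mu)\min_{a'}Q(x',a')$ is an $e^{-\gamma h}$-contraction in $\|\cdot\|_\infty$, so Banach's theorem yields a unique $Q_\mu$ solving \eqref{eqn:conv_bellman}, with $\mu\mapsto Q_\mu$ Lipschitz via Assumption \ref{assump:f} and the $\mu$-dependence of $p$ (the estimate anticipated in \eqref{mar26_lip}). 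Conversely, the first half associates to each $Q$ the unique equilibrium $\tilde\mu_Q$ solving $\tilde\mu_Q = \tilde\mu_Q P^{Q,\tilde\mu_Q}$, and $Q\mapsto\tilde\mu_Q$ is Lipschitz with constant $L_Q/(\beta-L_p)$ (Lemma \ref{lem:mu_and_Q}, obtained by the contraction computation above while retaining the $L_Q$ term of Assumption \ref{assump:p}). The composition $\Psi: \mu \mapsto \tilde\mu_{Q_\mu}$ is then a continuous self-map of the compact convex set $\mathcal{P}(\mathcal{X})$, so Brouwer's theorem provides a fixed point $\mu^*$. Setting $\tilde\mu^*=\mu^*$ and $Q_h^*=Q_{\mu^*}$, the identity $\mu^*=\tilde\mu_{Q_{\mu^*}}$ is exactly \eqref{eqn:conv_tildemu} while $Q_{\mu^*}$ solving its Bellman equation is \eqref{eqn:conv_bellman}, giving the desired pair.

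\textbf{Main obstacle.} I expect the delicate step to be the Lipschitz continuity of $Q \mapsto \tilde\mu_Q$, and more generally keeping the self-consistent occurrence of $\mu$ inside $P^{Q,\mu}$ under control. The contraction relies on invoking the Doeblin minorization at a kernel $P^{Q,\pi}$ whose second argument $\pi$ is genuinely its own invariant measure; taking $\pi=\mu_1$ (respectively $\pi=\tilde\mu_{Q_1}$), i.e.\ one of the two measures being compared, is what makes this legitimate rather than an arbitrary $\mu$. The strict inequality $\beta > (1+L_p)/2$ is precisely what absorbs both the $(1-\beta)$ Dobrushin factor and the $L_p$ perturbation while keeping the total factor below one; the remainder is a routine compactness-plus-continuity application of Brouwer.
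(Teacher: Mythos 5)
Your proof is correct. The first half (existence and uniqueness of the equilibrium for a fixed $Q$) is essentially the paper's argument: Brouwer (or the assumption itself) for existence, and the decomposition $\mu_1-\mu_2=(\mu_1-\mu_2)P^{Q,\mu_1}+\mu_2(P^{Q,\mu_1}-P^{Q,\mu_2})$ with the Doeblin minorization plus Assumption~\ref{assump:p} for uniqueness, which is exactly Lemma~\ref{lem:mu_and_Q} specialized to $Q_1=Q_2$ --- the lemma the paper invokes. (A cosmetic difference: you quote the sharp Dobrushin factor $1-\beta$, while the paper's cruder triangle-inequality derivation yields $2(1-\beta)$ and hence the constant $L_Q/(2\beta-1-L_p)$ in Lemma~\ref{lem:mu_and_Q} rather than your $L_Q/(\beta-L_p)$; both are positive under $\beta>(1+L_p)/2$, so nothing hinges on this.) The second half is where you genuinely diverge. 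The paper applies Brouwer once to the joint map $(\mu,Q)\mapsto(\mathcal{M}_Q(\mu),\mathcal{B}_{\mu}(Q))$ on $\mathbb{R}_+\times\mathcal{P}(\mathcal{X})$, checking only continuity of both components; you instead decouple, solving the Bellman equation exactly by Banach's theorem for each fixed $\mu$, using part one to define $\tilde\mu_Q$, and applying Brouwer only to the composition $\mu\mapsto\tilde\mu_{Q_\mu}$ on the simplex. Your route needs the extra (but cheap) observations that $\mu\mapsto Q_\mu$ and $Q\mapsto\tilde\mu_Q$ are continuous, but it buys something real: Brouwer is invoked on a set that is manifestly compact and convex, whereas the paper's product domain is not visibly compact in the $Q$-coordinate and would need the a priori bound $\|Q\|_\infty\le h\|f\|_\infty/(1-e^{-\gamma h})$ (which $\mathcal{B}_\mu$ preserves) to be supplied before Brouwer applies; your version sidesteps that entirely, at the mild cost of implicitly assuming the Lipschitz bound of Assumption~\ref{assump:p} for $p(\cdot\mid x,a,\mu)$ at every action $a$, not just the greedy one --- an imprecision the paper's own estimates share.
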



We defer the proof of the Proposition after Lemma \ref{lem:mu_and_Q}.

\begin{remark}
Except for a certain parameter regime where both $h$ and $e^{-\gamma h}$ are sufficiently small, the target fixed points $(Q_h^*, \tilde\mu^*)$ in general are not unique. This is why in the Lyapunov function (\ref{lyapunov}), we consider the minimum distance over all possible $Q_h^*$, since we do not know which pair of fixed points that the algorithm with fixed constant learning rates can converge to. For completeness, we leave the analysis of uniqueness parameter regime to Appendix \ref{sec:uniqueness}.
\end{remark}

Equipped with assumptions above, we are ready to state the main result: the unified convergence of (\ref{eqn:alg}).
\begin{thm}\label{thm:main}
    With Assumptions \ref{assump:f}, \ref{assump:p}, and \ref{assump:doeblin}, we require learning rates  to satisfy that
    \begin{equation}
        0<\rho^{\mu}<\frac{1}{2\beta-1-L_p},\quad 0<\rho^Q<\frac{1}{1-e^{-\gamma h}}.
    \end{equation}
   With these bounds, and taking $\Lambda_{\mu}= 1- \rho^{\mu}\big(2\beta-1-L_p\big)$, the Lyapunov function (\ref{lyapunov}) under the two-timescale Q-learning algorithm (\ref{eqn:alg}) contracts as 
    \begin{equation}\label{eq:mar27}
         \mathcal{L}_k \leq (1-c)^k \mathcal{L}_0+ \frac{2h\Lambda_{\mu}L_Q\rho^Q \|f\|_{\infty}}{c(1-e^{-\gamma h})(2\beta-1-L_p)}, 
    \end{equation}
    where  $c = \min\{c_1, c_2\}$ with 
    \begin{equation}
\begin{aligned}
    &c_1= \rho^Q\Big(1-e^{-\gamma h}-  \Big(L_f + \frac{L_p\|f\|_{\infty}}{e^{\gamma h}-1}  \Big)\frac{hL_Q}{2\beta-1-L_p} \Big)-\frac{2\Lambda_\mu L_Q}{W(2\beta-1-L_p) },\\
    &c_2= 1-\Lambda_{\mu} -W\rho^Q h \big(L_f + \frac{L_p\|f\|_{\infty} }{e^{\gamma h}-1} \big).
    \end{aligned}
\end{equation}
As $k\to \infty$, we have that $ \mathcal{L}_\infty = O(\rho^Q)$.
\end{thm}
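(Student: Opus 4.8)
The plan is to establish a one-step contraction of the form $\mathcal{L}_{k+1} \le (1-c)\mathcal{L}_k + e$ for a uniform error $e$, and then unroll this recursion geometrically to obtain \eqref{eq:mar27}, with $\mathcal{L}_\infty = e/c = O(\rho^Q)$. The two summands of $\mathcal{L}_k$ are coupled through the moving target $\tilde\mu_k$, which depends on $Q_{h,k}$ via \eqref{eqn:stationary_muk}, so the first ingredient I would need is the Lipschitz continuity of the equilibrium map $Q \mapsto \tilde\mu_Q$; this is exactly the content of Lemma \ref{lem:mu_and_Q} (with Proposition \ref{prop:brouwer} for well-posedness), and I expect the Lipschitz constant to be $L_Q/(2\beta-1-L_p)$, obtained by combining the Doeblin contraction of Assumption \ref{assump:doeblin} with the $Q$-Lipschitz dependence of $P^{Q,\mu}$ from Assumption \ref{assump:p}. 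With this in hand I would bound the two components separately and recombine them with the weight $W$.

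For the distribution component I would split $\mu_{k+1}-\tilde\mu_{k+1} = (\mu_{k+1}-\tilde\mu_k) + (\tilde\mu_k - \tilde\mu_{k+1})$. Writing $\mu_{k+1}-\tilde\mu_k = (1-\rho^\mu)(\mu_k-\tilde\mu_k) + \rho^\mu(\mu_k P^{Q_k,\mu_k}-\tilde\mu_k)$ and using $\tilde\mu_k = \tilde\mu_k P^{Q_k,\tilde\mu_k}$, the Doeblin contraction together with the $\mu$-Lipschitz bound of Assumption \ref{assump:p} should give $\|\mu_{k+1}-\tilde\mu_k\|_{\text{TV}} \le \Lambda_\mu \|\mu_k-\tilde\mu_k\|_{\text{TV}}$ with $\Lambda_\mu = 1-\rho^\mu(2\beta-1-L_p)$, which is where the constraint $\beta > (1+L_p)/2$ (so that $\Lambda_\mu\in(0,1)$) and the first learning-rate bound enter. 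The second difference is the drift of the target caused by the $Q$-update; by the equilibrium Lipschitz estimate it is controlled by $\|\tilde\mu_k-\tilde\mu_{k+1}\|_{\text{TV}} \le \frac{L_Q}{2\beta-1-L_p}\rho^Q\|\mathcal{T}(Q_k,\mu_k)\|_\infty$.

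For the $Q$-component I would introduce the Bellman operator $(\mathcal{B}_\mu Q)(x,a) = hf(x,a,\mu) + e^{-\gamma h}\sum_{x'}p(x'\mid x,a,\mu)\min_{a'}Q(x',a')$, so that $\mathcal{T}(Q,\mu)=\mathcal{B}_\mu Q - Q$ and any fixed point satisfies $Q_h^* = \mathcal{B}_{\tilde\mu^*}Q_h^*$. Starting from $Q_{h,k+1}-Q_h^* = (1-\rho^Q)(Q_{h,k}-Q_h^*)+\rho^Q(\mathcal{B}_{\mu_k}Q_{h,k}-Q_h^*)$ and inserting $\mathcal{B}_{\mu_k}Q_h^*$, the $e^{-\gamma h}$-contraction of $\mathcal{B}_{\mu_k}$ in $\|\cdot\|_\infty$ handles $\mathcal{B}_{\mu_k}Q_{h,k}-\mathcal{B}_{\mu_k}Q_h^*$, while the operator mismatch $\mathcal{B}_{\mu_k}Q_h^*-\mathcal{B}_{\tilde\mu^*}Q_h^*$ is estimated from the $\mu$-Lipschitz continuity of $f$ (Assumption \ref{assump:f}) and of $p$ (Assumption \ref{assump:p}) together with the a priori bound $\|Q_h^*\|_\infty \le h\|f\|_\infty/(1-e^{-\gamma h})$; this produces the factor $L_f + \frac{L_p\|f\|_\infty}{e^{\gamma h}-1}$ multiplying $\|\mu_k-\tilde\mu^*\|_{\text{TV}}$, which I would then split as $\|\mu_k-\tilde\mu_k\|_{\text{TV}} + \frac{L_Q}{2\beta-1-L_p}\|Q_{h,k}-Q_h^*\|_\infty$ via the triangle inequality and the equilibrium Lipschitz estimate. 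The resulting inequality is $\|Q_{h,k+1}-Q_h^*\|_\infty \le [1-\rho^Q(1-e^{-\gamma h})]\|Q_{h,k}-Q_h^*\|_\infty + \rho^Q h(L_f+\frac{L_p\|f\|_\infty}{e^{\gamma h}-1})(\|\mu_k-\tilde\mu_k\|_{\text{TV}} + \frac{L_Q}{2\beta-1-L_p}\|Q_{h,k}-Q_h^*\|_\infty)$, and the second learning-rate bound keeps the leading factor in $(0,1)$.

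Finally I would assemble $\mathcal{L}_{k+1} = W\min_{Q_h^*\in F}\|Q_{h,k+1}-Q_h^*\|_\infty + \|\mu_{k+1}-\tilde\mu_{k+1}\|_{\text{TV}}$, using the minimizer of $\mathcal{L}_k$ as a feasible comparator, substitute the two one-step bounds, and collect the coefficients of $\|Q_{h,k}-Q_h^*\|_\infty$ and $\|\mu_k-\tilde\mu_k\|_{\text{TV}}$. The coefficient of the weighted $Q$-discrepancy reduces to $1-c_1$ — the $1/W$ term in $c_1$ being the price of the target drift $\|\tilde\mu_{k+1}-\tilde\mu_k\|_{\text{TV}}$ feeding the $Q$-error back into the $\mu$-recursion — and the coefficient of the $\mu$-discrepancy reduces to $1-c_2$, so that $\mathcal{L}_{k+1}\le(1-c)\mathcal{L}_k + e$ with $c=\min\{c_1,c_2\}$. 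The constant residual $e$ comes from the uniformly bounded part of the drift, for which I would first show by an invariance argument that the iterates stay in the ball $\|Q_{h,k}\|_\infty\le h\|f\|_\infty/(1-e^{-\gamma h})$ and hence $\|\mathcal{T}(Q_k,\mu_k)\|_\infty\le 2h\|f\|_\infty/(1-e^{-\gamma h})$; summing the geometric recursion then gives \eqref{eq:mar27} and $\mathcal{L}_\infty=e/c=O(\rho^Q)$. The main obstacle is precisely this coupling through the moving equilibrium $\tilde\mu_k$: because the target shifts by $O(\rho^Q)$ at every step the $\mu$-iterate can never fully catch up, and the delicate point is to choose $W$ (depending on the ratio $\rho^Q/\rho^\mu$) so that the cross-coupling terms are dominated uniformly, yielding a single contraction factor $c>0$ valid across the entire spectrum from the MFG to the MFC regime.
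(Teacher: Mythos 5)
Your proposal follows essentially the same route as the paper: a one-step contraction $\mathcal{L}_{k+1}\le(1-c)\mathcal{L}_k+e$ built from the Doeblin contraction of the $\mu$-update toward the instantaneous equilibrium $\tilde\mu_k$ (Proposition~\ref{prop:conv_mu}), the Bellman-contraction recursion for $\|Q_{h,k}-Q_h^*\|_\infty$ with the $\mu$-mismatch term (Proposition~\ref{prop:conv_Q}), Lemma~\ref{lem:mu_and_Q} to convert equilibrium-distribution differences into $Q$-differences, and a weighted recombination followed by geometric summation. The one place you diverge is the target drift: you bound $\|\tilde\mu_{k+1}-\tilde\mu_k\|_{\text{TV}}$ directly by $\tfrac{L_Q}{2\beta-1-L_p}\|Q_{h,k+1}-Q_{h,k}\|_\infty=\tfrac{L_Q\rho^Q}{2\beta-1-L_p}\|\mathcal{T}(Q_k,\mu_k)\|_\infty$, whereas the paper detours through $\tilde\mu^*$ via the triangle inequality and thereby picks up an extra $\tfrac{2\Lambda_\mu L_Q}{2\beta-1-L_p}\|Q_{h,k}-Q_h^*\|_\infty$, which is exactly the source of the $\tfrac{2\Lambda_\mu L_Q}{W(2\beta-1-L_p)}$ term in $c_1$; your tighter bound pushes that contribution entirely into the constant residual $e$ and would yield a strictly larger $c_1$, so your later claim that the $1/W$ term in $c_1$ is ``the price of the target drift'' is inconsistent with your own estimate (harmless, since you only prove a stronger inequality than stated). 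Your version of the one-step $\mu$-contraction, $\|\mu_{k+1}-\tilde\mu_k\|_{\text{TV}}\le\Lambda_\mu\|\mu_k-\tilde\mu_k\|_{\text{TV}}$, is also the correct application of Proposition~\ref{prop:conv_mu} (the update kernel at step $k$ has equilibrium $\tilde\mu_k$, not $\tilde\mu_{k+1}$), which the paper glosses over.
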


Our convergence result significantly extends that of \cite{angiuli2023convergence}, which only considered extreme regimes where $\lim_{k\to \infty}\rho_k^Q/\rho_k^\mu = 0$ and $\lim_{k\to \infty}\rho_k^Q/\rho_k^\mu=\infty$, and applied convergence results from \cite{borkar1997stochastic} directly with no quantitative convergence rates. 
Here we choose $\rho_k^\mu, \rho_k^Q$ to be fixed constants $\rho^{\mu}, \rho^Q$ rather than of the Robbins-Monro type  as in \cite{borkar1997stochastic} for simplicity. We believe our result can be extended to Robbins-Monro type learning rates as well with some modifications. 
\begin{remark}
Our quantitative convergence result sheds insight on how the contraction rate $1-c$ depends on learning rates $\rho^Q, \rho^\mu$ precisely. The choices of $c$ in (\ref{eq:mar27}) illustrate the dichotomy convergence behaviors of the two-timescale Q-learning algorithm (\ref{eqn:alg}) when $\rho^Q\gg \rho^{\mu}$ or $\rho^Q\ll \rho^{\mu}$, thus get connected to MFG and MFC regimes.
\begin{itemize}
    \item In the MFG regime where $\rho^Q\gg \rho^{\mu}$, recall $\Lambda_{\mu} =1- \rho^{\mu}\big(2\beta-1-L_p\big)$, we need to ensure 
    \begin{equation}
        c_2 = \rho^{\mu}\big(2\beta-1-L_p\big) - W\rho^Q h \big(L_f + \frac{L_p\|f\|_{\infty} }{e^{\gamma h}-1} \big) >0,
    \end{equation}
    which implies that
    \begin{equation}
        W< \frac{\rho^{\mu}\big(2\beta-1-L_p\big)}{\rho^Q h \big(L_f + \frac{L_p\|f\|_{\infty} }{e^{\gamma h}-1} \big) } \ll 1.
    \end{equation}
    It means that  the convergence of $\mu$ dominates the convergence of \eqref{eqn:alg}, which is aligned with the fact that we have fast convergence for $Q$ and slow convergence for $\mu$.
    \item In the MFC regime where $\rho^Q\ll \rho^{\mu}$, we need to ensure $c_1>0$ so that 
    \begin{equation}
    \begin{aligned}
        W&>\frac{2\Lambda_\mu L_Q}{\rho^Q(2\beta-1-L_p) \Big(1-e^{-\gamma h}-  \Big(L_f + \frac{L_p\|f\|_{\infty}}{e^{\gamma h}-1}  \Big)\frac{hL_Q}{2\beta-1-L_p} \Big)}.
\end{aligned}
    \end{equation}
    It means that for sufficiently small $\rho^Q$, we need to put a larger weight on $Q$ convergence so that $Q$ dominates the whole process. It is aligned with our observation that in the MFC regime, we have fast convergence for $\mu$ and slow convergence for $Q$.
\end{itemize}
\end{remark}
\subsection{Auxiliary results}

We first present some auxiliary results before proving Theorem~\ref{thm:main}. The following theorem investigates the case when $Q_{h,k}\equiv Q$ is fixed.

\begin{proposition}\label{prop:conv_mu}
Let $\mu_k$ update as in \eqref{eqn:alg} with $Q_{h,k}\equiv Q$. Suppose $\tilde\mu$ is the equilibrium probability measure such that $\tilde \mu = \tilde\mu P^{Q,\tilde \mu}$, the transition probability satisfies Assumption~\ref{assump:p},  then given a fixed step size $\rho^\mu$ that satisfies
\begin{equation}
    0<\rho^{\mu}< \frac{1}{2\beta-1-L_p}
\end{equation}
and $\beta$ provided in Assumption \ref{assump:doeblin}, we can find a contraction rate $\Lambda_\mu =  1- \rho^{\mu}\big(2\beta-1-L_p\big)$ such that for all $k\geq 0$,
\begin{equation}\label{eqn:conv_mu}
    \|\mu_{k+1}-\tilde \mu\|_{\text{TV}} \leq \Lambda_\mu \|\mu_k-\tilde \mu\|_{\text{TV}}.
\end{equation}
\end{proposition}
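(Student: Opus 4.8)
The plan is to exploit the fact that when $Q$ is frozen the update $\mu_{k+1}=\mu_k+\rho^\mu\mathcal{P}(Q,\mu_k)$ collapses to a damped iteration of a single nonlinear map. Indeed, using $\mathcal{P}(Q,\mu)(x)=(\mu P^{Q,\mu})(x)-\mu(x)$ we get $\mu_{k+1}=(1-\rho^\mu)\mu_k+\rho^\mu\,\mu_k P^{Q,\mu_k}$, i.e.\ a relaxed iteration of $\Phi(\mu):=\mu P^{Q,\mu}$, whose fixed point is exactly $\tilde\mu$ because $\mathcal{P}(Q,\tilde\mu)=0$ means $\tilde\mu=\tilde\mu P^{Q,\tilde\mu}$. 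By uniqueness of the equilibrium (Proposition~\ref{prop:brouwer}) this $\tilde\mu$ is the measure $\pi$ appearing in Assumption~\ref{assump:doeblin}. Writing the fixed point in the same relaxed form $\tilde\mu=(1-\rho^\mu)\tilde\mu+\rho^\mu\,\tilde\mu P^{Q,\tilde\mu}$ and subtracting gives
\[
\mu_{k+1}-\tilde\mu=(1-\rho^\mu)(\mu_k-\tilde\mu)+\rho^\mu\bigl(\mu_k P^{Q,\mu_k}-\tilde\mu P^{Q,\tilde\mu}\bigr),
\]
so the whole argument reduces to contracting the one-step difference $\Phi(\mu_k)-\Phi(\tilde\mu)$ in total variation, which I would do by the standard split-into-contraction-plus-perturbation technique.

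First I would telescope the one-step difference through the intermediate measure $\mu_k P^{Q,\tilde\mu}$, writing
\[
\mu_k P^{Q,\mu_k}-\tilde\mu P^{Q,\tilde\mu}=(\mu_k-\tilde\mu)P^{Q,\tilde\mu}+\mu_k\bigl(P^{Q,\mu_k}-P^{Q,\tilde\mu}\bigr).
\]
The split is arranged deliberately so that the \emph{contraction} term carries the kernel $P^{Q,\tilde\mu}$ evaluated at the equilibrium measure, which is the only transition for which Assumption~\ref{assump:doeblin} supplies a minorization. For this term I would use that $\eta:=\mu_k-\tilde\mu$ has total mass zero and that $\tilde\mu$ is the invariant law of the \emph{linear} Markov operator $\eta\mapsto\eta P^{Q,\tilde\mu}$; decomposing $P^{Q,\tilde\mu}(x,\cdot)=\beta\nu(\cdot)+(1-\beta)R(x,\cdot)$ with $R$ a Markov residual kernel, the $\beta\nu$ component annihilates against the zero mass of $\eta$, leaving the Dobrushin-type bound $\|(\mu_k-\tilde\mu)P^{Q,\tilde\mu}\|_{\text{TV}}\le(1-\beta)\|\mu_k-\tilde\mu\|_{\text{TV}}$.

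For the \emph{perturbation} term I would invoke Assumption~\ref{assump:p}; since $Q$ is held fixed the $L_Q$ contribution drops out, and summing the per-row bound $\sum_{x'}|P^{Q,\mu_k}(x,x')-P^{Q,\tilde\mu}(x,x')|\le L_p\|\mu_k-\tilde\mu\|_{\text{TV}}$ against the probability weights $\mu_k(x)$ gives $\|\mu_k(P^{Q,\mu_k}-P^{Q,\tilde\mu})\|_{\text{TV}}\le L_p\|\mu_k-\tilde\mu\|_{\text{TV}}$. Combining the two estimates with the relaxed identity and collecting the coefficient of $\rho^\mu$ produces a one-step factor $1-\rho^\mu\,g$ whose effective gap $g$ is bounded below by $2\beta-1-L_p$ (the sharp Doeblin/Lipschitz bookkeeping even yields the slightly larger gap $\beta-L_p$, which trivially dominates $2\beta-1-L_p$ since $\beta<1$, so the theorem's conservative constant is implied). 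The minorization constraint $\beta\in\bigl(\tfrac{1+L_p}{2},1\bigr)$ is precisely what makes $2\beta-1-L_p>0$, so that $\Lambda_\mu=1-\rho^\mu(2\beta-1-L_p)<1$, while the step-size bound $\rho^\mu<1/(2\beta-1-L_p)$ keeps $\Lambda_\mu>0$ (and small enough that $1-\rho^\mu\ge0$, preserving the convex-combination structure used in the triangle inequality).

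The step I expect to be the genuine obstacle is the contraction estimate for $(\mu_k-\tilde\mu)P^{Q,\tilde\mu}$: the Doeblin minorization is asserted only for the kernel taken \emph{at its own invariant measure}, so it cannot be applied to the data-dependent kernel $P^{Q,\mu_k}$, and the entire $\mu_k$-dependence of the transition must be shunted into the $L_p$-controlled Lipschitz term. Forcing the contraction factor and the Lipschitz factor to combine into a bona fide contraction—rather than merely a bounded perturbation—is exactly where the quantitative balance encoded in $\beta>(1+L_p)/2$ is indispensable, and tracking the total-variation constants carefully so that the net coefficient comes out as the stated $\Lambda_\mu$ is the one place that needs real care.
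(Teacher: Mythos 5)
Your proposal follows the paper's proof essentially step for step: the same relaxed-iteration identity $\mu_{k+1}-\tilde\mu=(1-\rho^\mu)(\mu_k-\tilde\mu)+\rho^\mu\bigl((\mu_k-\tilde\mu)P^{Q,\tilde\mu}+\mu_k(P^{Q,\mu_k}-P^{Q,\tilde\mu})\bigr)$, with the Doeblin minorization handling the first bracketed term and Assumption~\ref{assump:p} handling the second. The only divergence is in the Doeblin step, where your residual-kernel decomposition $P^{Q,\tilde\mu}=\beta\nu+(1-\beta)R$ (with the $\beta\nu$ part annihilated by the zero mass of $\mu_k-\tilde\mu$) yields the sharper factor $(1-\beta)\|\mu_k-\tilde\mu\|_{\text{TV}}$ where the paper's positive/negative-part computation yields $2(1-\beta)\|\mu_k-\tilde\mu\|_{\text{TV}}$; as you note, your resulting gap $\beta-L_p$ dominates the stated $2\beta-1-L_p$, so the proposition follows.
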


\begin{proof}
The $\mu_k$ update step in \eqref{eqn:alg} gives that
\begin{equation}
    \begin{aligned}
        \mu_{k+1} -\tilde{\mu}&= \mu_{k} -\tilde{\mu}+\rho^{\mu}\big(\mu_k P^{Q,\mu_k}-\mu_k\big)\\
           &  =     \mu_{k} -\tilde{\mu}+\rho^{\mu}\big((\mu_k -\tilde \mu )P^{Q,\tilde \mu}+\mu_k(P^{Q,\mu_k}-P^{Q,\tilde\mu})-(\mu_k-\tilde \mu) \big)\\
           & = (1-\rho^\mu)(\mu_{k} -\tilde{\mu}) + \rho^{\mu}\big((\mu_k -\tilde \mu )P^{Q,\tilde \mu}+\mu_k(P^{Q,\mu_k}-P^{Q,\tilde\mu})\big).
    \end{aligned}
\end{equation}
Using the triangle inequality, we have that
\begin{equation}\label{jan293}
    \begin{aligned}
       \|\mu_{k+1} -\tilde{\mu}\|_{\text{TV}}&\leq  (1-\rho^\mu)\|\mu_{k} -\tilde{\mu}\|_{\text{TV}} + \rho^{\mu}\|(\mu_k -\tilde \mu )P^{Q,\tilde \mu}\|_{\text{TV}}\\
       &\quad + \rho^{\mu} \|\mu_k(P^{Q,\mu_k}-P^{Q,\tilde\mu})\|_{\text{TV}} = (1-\rho^\mu)\|\mu_{k} -\tilde{\mu}\|_{\text{TV}}+I+II.
    \end{aligned}
\end{equation}
To treat $I$, we decompose $\mu_k-\tilde\mu$ as
\begin{equation}
    \mu_k-\tilde\mu = (\mu_k-\tilde\mu)_+ -(\mu_k-\tilde\mu)_- =: S_+ - S_-.
\end{equation}
Note that for any $A\subseteq \mathcal{X}$, by (\ref{eq:doeblin}), 
\begin{equation}\label{jan291}
    \sum_{x\in A}\sum_{x_0\in\mathcal{X}}S_{\pm}(x_0)P^{Q,\tilde \mu}(x_0,x)\geq \beta \sum_{x\in A}\sum_{x_0\in\mathcal{X}}S_{\pm}(x_0) \nu(x),
\end{equation}
and 
\begin{equation}\label{jan292}
    \sum_{x\in A}\sum_{x_0\in\mathcal{X}}S_{+}(x_0) \nu(x)= \sum_{x\in A}\sum_{x_0\in\mathcal{X}}S_{-}(x_0) \nu(x).
\end{equation}
Therefore, we use (\ref{jan292}) and apply the triangle inequality to get
\begin{equation}
    \begin{aligned}
         &\|(\mu_k-\tilde\mu)P^{Q,\tilde \mu} \|_{\text{TV}} =
         \sup_{A\subseteq \mathcal{X}}\Big| \sum_{x\in A} S_+P^{Q,\tilde \mu}(x) - S_-P^{Q,\tilde \mu}(x)\Big|\\
         &\leq \sup_{A\subseteq \mathcal{X}}\Big| \sum_{x\in A} S_+P^{Q,\tilde \mu}(x) - \beta\sum_{x\in A}\sum_{x_0\in\mathcal{X}}S_{+}(x_0) \nu(x)\Big|+\sup_{A\subseteq \mathcal{X}}\Big| \sum_{x\in A} S_-P^{Q,\tilde \mu}(x) - \beta\sum_{x\in A}\sum_{x_0\in\mathcal{X}}S_{-}(x_0) \nu(x)\Big|\\
&= \sum_{x\in \mathcal{X}}\Big( S_+P^{Q,\tilde \mu}(x) - \beta\sum_{x_0\in\mathcal{X}}S_{+}(x_0) \nu(x)\Big) + \sum_{x\in \mathcal{X}}\Big( S_-P^{Q,\tilde \mu}(x) - \beta\sum_{x_0\in\mathcal{X}}S_{-}(x_0) \nu(x)\Big)\\
&= \sum_{x\in \mathcal{X}}\sum_{x_0\in\mathcal{X}}(S_+ + S_-)(x_0)P^{Q,\tilde \mu}(x_0,x)-\beta \sum_{x\in \mathcal{X}}\sum_{x_0\in\mathcal{X}}(S_+ + S_-)(x_0)\nu(x)\\
&= (1-\beta)\sum_{x_0\in\mathcal{X}} |\mu_k(x_0)-\tilde\mu(x_0)| = (1-\beta) \|\mu_k-\tilde\mu\|_1 = 2(1-\beta)\|\mu_k-\tilde\mu \|_{\text{TV}},
\end{aligned}
\end{equation}
where the second equality above uses (\ref{jan291}).

For $II$, we use Assumption \ref{assump:p} to get
\begin{equation}
    \begin{aligned}
     \|\mu_k(P^{Q,\mu_k}-P^{Q,\tilde\mu})\|_{\text{TV}} &= \sup_{A\subseteq \mathcal{X}}\Big| \sum_{x\in A}\sum_{x_0\in\mathcal{X}} \mu_k(x_0)\big(P^{Q,\mu_k}(x_0,x)-P^{Q,\tilde\mu}(x_0,x)\big)\Big|\\
     &\leq \sum_{x\in \mathcal{X}}\sum_{x_0\in\mathcal{X}} \mu_k(x_0)\Big|P^{Q,\mu_k}(x_0,x)-P^{Q,\tilde\mu}(x_0,x)\Big|\\
     &\leq L_p \|\mu_k-\tilde{\mu}\|_{\text{TV}}\sum_{x_0\in\mathcal{X}} \mu_k(x_0) =  L_p \|\mu_k-\tilde{\mu}\|_{\text{TV}}.
    \end{aligned}
\end{equation}

Combining all parts together, we have
\begin{equation*}
     \|\mu_{k+1} -\tilde{\mu}\|_{\text{TV}} \leq \Big(1- \rho^{\mu}\big(2\beta-1-L_p\big)\Big)\|\mu_k-\tilde{\mu}\|_{\text{TV}}.  
\end{equation*}
\end{proof}

Now let $Q_{h,k}$ update as in the algorithm, we have the following iteration bound for $Q_{h,k}$.
\begin{proposition}\label{prop:conv_Q}
With Assumptions \ref{assump:f} and \ref{assump:p}, we can find a contraction rate $\Lambda_Q=1-\rho^Q(1-e^{-\gamma h}) \in(0,1)$ such that the maximal difference between $Q_{h,k}$ and a fixed point $(Q_h^*, \tilde{\mu}^{\ast})$ in \eqref{eqn:conv_bellman}--\eqref{eqn:conv_tildemu} iterates as
  \begin{equation}\label{jan294}
  \begin{aligned}
\|{Q}_{h,k+1} -  Q_{h}^*\|_{\infty} & \leq (1-\rho^Q(1-e^{-\gamma h}))\|{Q}_{h,k} -  Q_{h}^*\|_{\infty} \\
& \qquad +\rho^Q h  \|\mu_k-\tilde{\mu}^*\|_{\text{TV}} \Big(L_f + \frac{L_p}{e^{\gamma h}-1} \|f\|_{\infty} \Big).
  \end{aligned}
  \end{equation}
\end{proposition}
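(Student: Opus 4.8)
The plan is to rewrite the $Q$-update as a \emph{damped Bellman iteration} and then exploit the contractivity of the Bellman operator in $Q$ together with its Lipschitz dependence on $\mu$. Introducing the (undiscounted-residual-free) Bellman operator
\[
\mathcal{B}(Q,\mu)(x,a) := hf(x,a,\mu) + e^{-\gamma h}\sum_{x'} p(x'\mid x,a,\mu)\min_{a'}Q(x',a'),
\]
the update in (\ref{eqn:alg}) reads $Q_{h,k+1} = (1-\rho^Q)Q_{h,k} + \rho^Q\,\mathcal{B}(Q_{h,k},\mu_k)$, while the fixed point obeys $Q_h^* = \mathcal{B}(Q_h^*,\tilde\mu^*)$ by (\ref{eqn:conv_bellman}). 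Subtracting and using the triangle inequality (in the relevant regime $\rho^Q\le 1$, so that $1-\rho^Q\ge 0$), I obtain
\[
\|Q_{h,k+1}-Q_h^*\|_\infty \le (1-\rho^Q)\|Q_{h,k}-Q_h^*\|_\infty + \rho^Q\,\big\|\mathcal{B}(Q_{h,k},\mu_k)-\mathcal{B}(Q_h^*,\tilde\mu^*)\big\|_\infty,
\]
so the whole estimate reduces to bounding the operator increment.

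Next I would split the increment through the intermediate term $\mathcal{B}(Q_h^*,\mu_k)$ into a ``$Q$-part'' $\mathcal{B}(Q_{h,k},\mu_k)-\mathcal{B}(Q_h^*,\mu_k)$ and a ``$\mu$-part'' $\mathcal{B}(Q_h^*,\mu_k)-\mathcal{B}(Q_h^*,\tilde\mu^*)$. For the $Q$-part, the cost term cancels at fixed $\mu_k$, and since the map $Q\mapsto\min_{a'}Q(\cdot,a')$ is non-expansive in the sup norm while $\sum_{x'}p(x'\mid x,a,\mu_k)=1$, this part is bounded by $e^{-\gamma h}\|Q_{h,k}-Q_h^*\|_\infty$. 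Combined with the $(1-\rho^Q)$ prefactor, the two $Q$-contributions produce the coefficient $(1-\rho^Q)+\rho^Q e^{-\gamma h}=1-\rho^Q(1-e^{-\gamma h})=\Lambda_Q$, which explains why $\Lambda_Q\in(0,1)$ under the stated bound on $\rho^Q$.

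For the $\mu$-part I estimate its two pieces. The cost difference $h\,|f(x,a,\mu_k)-f(x,a,\tilde\mu^*)|$ is controlled by $hL_f\|\mu_k-\tilde\mu^*\|_{\text{TV}}$ through Assumption~\ref{assump:f} (with $L_f$ the Lipschitz-in-$\mu$ constant). For the transition piece $e^{-\gamma h}\sum_{x'}\big(p(x'\mid x,a,\mu_k)-p(x'\mid x,a,\tilde\mu^*)\big)\min_{a'}Q_h^*(x',a')$, I bound it by $e^{-\gamma h}\|Q_h^*\|_\infty\sum_{x'}\big|p(x'\mid x,a,\mu_k)-p(x'\mid x,a,\tilde\mu^*)\big|\le e^{-\gamma h}L_p\|Q_h^*\|_\infty\|\mu_k-\tilde\mu^*\|_{\text{TV}}$ via Assumption~\ref{assump:p}. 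The last ingredient is an a priori bound on $\|Q_h^*\|_\infty$: taking sup norms in (\ref{eqn:conv_bellman}) yields $\|Q_h^*\|_\infty\le h\|f\|_\infty + e^{-\gamma h}\|Q_h^*\|_\infty$, hence $\|Q_h^*\|_\infty\le h\|f\|_\infty/(1-e^{-\gamma h})$. Substituting this and using the identity $e^{-\gamma h}/(1-e^{-\gamma h})=1/(e^{\gamma h}-1)$ collapses the $\mu$-coefficient to $\rho^Q h\big(L_f + \frac{L_p}{e^{\gamma h}-1}\|f\|_\infty\big)$, giving exactly (\ref{jan294}).

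The \emph{main obstacle} is the transition piece. Assumption~\ref{assump:p} is phrased for the policy-induced kernel $P^{Q,\mu}$ evaluated at the argmin action, whereas $\mathcal{B}$ uses an arbitrary fixed action $a$; I would bridge this by observing that for any fixed $a$ one may choose a $Q$ with $\argmin_{a'}Q(x,a')=a$ and take $Q_1=Q_2$ in Assumption~\ref{assump:p}, so the $L_Q$ term drops and the $L_p$-Lipschitz bound transfers to $p(\cdot\mid x,a,\cdot)$ for each fixed $a$. The only other delicate point is the uniform control of $\min_{a'}Q_h^*(x',a')$, resolved by the a priori bound above; this is precisely the source of the factor $\|f\|_\infty$ and of the discount-dependent denominator $e^{\gamma h}-1$ in the final constant.
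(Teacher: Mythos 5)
Your proof is correct and follows essentially the same route as the paper: both rewrite the update as a damped Bellman iteration, peel off the $(1-\rho^Q)+\rho^Q e^{-\gamma h}=\Lambda_Q$ contraction from the $Q$-dependence, and charge the $\mu$-dependence to the Lipschitz constants $L_f$ and $L_p$ together with the a priori bound $\|\cdot\|_\infty\le h\|f\|_\infty/(1-e^{-\gamma h})$. The only differences are cosmetic but mildly in your favor: you split through the intermediate point $\mathcal{B}(Q_h^*,\mu_k)$ rather than changing $\mu$ first, so the sup-norm bound lands on the fixed point $Q_h^*$ (where it follows directly from \eqref{eqn:conv_bellman}) instead of on the iterate $Q_{h,k}$ as in the paper, and you explicitly justify transferring the $L_p$-Lipschitz bound of Assumption~\ref{assump:p} from the greedy-policy kernel $P^{Q,\mu}$ to $p(\cdot\mid x,a,\cdot)$ for an arbitrary fixed action, a step the paper leaves implicit.
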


\begin{proof}
    We rewrite the absolute value difference 
    \begin{equation}
    \Delta_{k}(x,a) := |{Q}_{h,k}(x,a) -  Q_{h}^*(x,a)|
    \end{equation}
for shortness. The two-timescale Q-learning \eqref{eqn:alg} gives that
\begin{equation}
\begin{aligned}
    Q_{h,k+1}&(x,a)-Q_{h}^*(x,a)= Q_{h,k}(x,a)-Q_{h}^*(x,a) + \rho^Q\mathcal{T}(Q_{h,k},\mu_k)\\
    &= (1-\rho^Q)(Q_{h,k}(x,a)-Q_{h}^*(x,a)) + \rho^Q\Big(h f(x,a,\mu_k)-h f(x,a,\tilde{\mu}^*)\\
    &\quad  +e^{-\gamma h}\sum_{x'\in\mathcal{X}}p(x'\mid x,a,\mu_k)\inf_{a'} Q_{h,k}(x',a')-e^{-\gamma h}\sum_{x'\in\mathcal{X}}p(x'\mid x,a,\tilde{\mu}^*)\inf_{a'} Q_h^*(x',a')\Big).
\end{aligned}
\end{equation}
By the triangle inequality, we get that
\begin{equation}
\begin{aligned}
    \Delta_{k+1}(x,a)& \leq (1-\rho^Q)\Delta_k(x,a) + \rho^Q h |f(x,a,\mu_k)-f(x,a,\tilde{\mu}^*)|  \\
    &~~~ +\rho^Q e^{-\gamma h}\Big|\sum_{x'\in\mathcal{X}}p(x'\mid x,a,\mu_k)\inf_{a'} Q_{h,k}(x',a')-\sum_{x'\in\mathcal{X}}p(x'\mid x,a,\tilde{\mu}^*)\inf_{a'} Q_h^*(x',a')\Big|\\
    & \leq (1-\rho^Q)\Delta_k(x,a) + I+II+III,
\end{aligned}
\end{equation}
where 
\begin{align*}
    I  &=  \rho^Q h |f(x,a,\mu_k)-f(x,a,\tilde{\mu}^*)|, \\
    II &= \rho^Q e^{-\gamma h}\Big|\sum_{x'\in\mathcal{X}}p(x'\mid x,a,\mu_k)\inf_{a'} Q_{h,k}(x',a')-\sum_{x'\in\mathcal{X}}p(x'\mid x,a,\tilde{\mu}^*)\inf_{a'} Q_{h,k}(x',a')\Big|,\\
    III & = \rho^Q e^{-\gamma h}\Big|\sum_{x'\in\mathcal{X}}p(x'\mid x,a,\tilde{\mu}^*)\inf_{a'} Q_{h,k}(x',a')-\sum_{x'\in\mathcal{X}}p(x'\mid x,a,\tilde{\mu}^*)\inf_{a'} Q_h^*(x',a')\Big|.
\end{align*}
For $I$, we use the Assumption \ref{assump:f} to get
\begin{equation}\label{dec060}
    I \leq \rho^Q h L_f \|\mu_k-\tilde{\mu}^*\|_{\text{TV}}.
\end{equation}
For $II$, we use the Assumption \ref{assump:p} to get
\begin{equation}\label{dec061}
   II \leq \rho^Q e^{-\gamma h} \|Q_{h,k}\|_{\infty} L_p\|\mu_k-\tilde{\mu}^*\|_{\text{TV}} \leq \rho^Q \frac{ L_p h}{e^{\gamma h}-1} \|f\|_{\infty}  \|\mu_k-\tilde{\mu}^*\|_{\text{TV}},
\end{equation}
since by the definition of discrete time Q-function, 
\begin{equation}\label{eq:boundQ}
    \|Q_{h,k}\|_{\infty}\leq \frac{h}{1-e^{-\gamma h}}\|f\|_{\infty}.
\end{equation}
For $III$, we have
\begin{equation}\label{dec062}
    III\leq \rho^Q e^{-\gamma h}\sup_{a'}\sup_{x'\in\mathcal{X}}\Delta_k(x',a').
\end{equation}
Now combining all bounds of $I, II, III$ together, we can take the supremum over $(x, a)\in \mathcal{X}\times \mathcal{A}$ on the right side first and left side later to obtain that
\begin{equation*}
    \|{Q}_{h,k+1} -  Q_{h}^*\|_{\infty}\leq (1-\rho^Q(1-e^{-\gamma h}))\|{Q}_{h,k} -  Q_{h}^*\|_{\infty}+\rho^Q h  \|\mu_k-\tilde{\mu}^*\|_{\text{TV}} \Big(L_f + \frac{L_p}{e^{\gamma h}-1} \|f\|_{\infty} \Big). 
\end{equation*}
\end{proof}

We do not know the relation between $\tilde\mu_k$ and $\tilde\mu^*$ a priori, since the equation $\mu = \mu P^{Q,\mu}$ is highly nonlinear. However, we can control 
the difference between two equilibrium distributions by the difference of their corresponding $Q$-functions, if one of $P^{Q,\mu}$ satisfies the uniform Doeblin's condition.

\begin{lemma}\label{lem:mu_and_Q}
   Given $Q_1, Q_2\in \Rm_+$, if $\mu_1, \mu_2\in\mathcal{P}(\mathcal{X})$ solves
    \begin{equation}
        \mu_1(I- P^{Q_1, \mu_1}) =0,\quad \mu_2(I- P^{Q_2, \mu_2}) =0
    \end{equation}
    respectively, then based on Assumptions \ref{assump:p} and \ref{assump:doeblin} of $P^{Q_2, \mu_2}$ (or $P^{Q_1, \mu_1}$),  we have the relation 
    \begin{equation}\label{eqn:uniqueness}
    \|\mu_1-\mu_2\|_{\text{TV}} \leq \frac{L_Q}{2\beta-1-L_p} \|Q_1-Q_2\|_{\infty}.
\end{equation}
\end{lemma}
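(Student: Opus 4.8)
The plan is to mimic the contraction argument of Proposition~\ref{prop:conv_mu}, the essential difference being that here I compare two genuine equilibria of two \emph{different} kernels, rather than tracking a single iterate against a fixed target. Starting from the fixed-point identities $\mu_1 = \mu_1 P^{Q_1,\mu_1}$ and $\mu_2 = \mu_2 P^{Q_2,\mu_2}$, I would introduce the telescoping decomposition
\begin{equation}
\mu_1 - \mu_2 = \mu_1 P^{Q_1,\mu_1} - \mu_2 P^{Q_2,\mu_2} = (\mu_1-\mu_2)P^{Q_2,\mu_2} + \mu_1\big(P^{Q_1,\mu_1}-P^{Q_2,\mu_2}\big),
\end{equation}
which cleanly separates the ``measure difference'' propagated by a single fixed kernel from the ``kernel difference'' evaluated against a fixed measure. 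Applying the triangle inequality in total variation then reduces the lemma to estimating these two pieces.

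For the first piece I observe that $\mu_2$ is exactly the equilibrium $\pi=\mu_2$ solving $\pi = \pi P^{Q_2,\pi}$, so the uniform Doeblin condition (Assumption~\ref{assump:doeblin}) applies to $P^{Q_2,\mu_2}$. The sign decomposition $\mu_1-\mu_2 = S_+ - S_-$ together with the minorization $P^{Q_2,\mu_2}(x,\cdot)\ge \beta\nu(\cdot)$ is precisely the computation already carried out for the term $I$ in Proposition~\ref{prop:conv_mu}, and it yields $\|(\mu_1-\mu_2)P^{Q_2,\mu_2}\|_{\text{TV}} \le 2(1-\beta)\|\mu_1-\mu_2\|_{\text{TV}}$. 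For the second piece I would invoke the joint Lipschitz bound of Assumption~\ref{assump:p}: bounding the supremum over subsets by the full $\ell_1$ sum and using $\sum_{x_0}\mu_1(x_0)=1$ gives
\begin{equation}
\big\|\mu_1(P^{Q_1,\mu_1}-P^{Q_2,\mu_2})\big\|_{\text{TV}} \le L_p\|\mu_1-\mu_2\|_{\text{TV}} + L_Q\|Q_1-Q_2\|_\infty.
\end{equation}

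Combining the two estimates gives $\|\mu_1-\mu_2\|_{\text{TV}} \le \big(2(1-\beta)+L_p\big)\|\mu_1-\mu_2\|_{\text{TV}} + L_Q\|Q_1-Q_2\|_\infty$, and moving the $\mu$-terms to the left produces the coefficient $1 - 2(1-\beta) - L_p = 2\beta-1-L_p$, which is strictly positive precisely because Assumption~\ref{assump:doeblin} imposes $\beta > (1+L_p)/2$. Dividing through then yields the claimed bound~\eqref{eqn:uniqueness}. The only genuine subtlety, and the point I would flag as the crux, is that $\mu_1$ and $\mu_2$ solve distinct nonlinear fixed-point equations, so no direct comparison is possible; the telescoping step is exactly what converts the problem into a balance between the Doeblin-induced contraction and the $L_p$-Lipschitz perturbation of the kernel in $\mu$, and the argument succeeds only because the Doeblin threshold $\beta>(1+L_p)/2$ is calibrated to dominate $L_p$. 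The symmetric version, applying the Doeblin condition to $P^{Q_1,\mu_1}$ instead, follows verbatim by swapping the roles of the two indices.
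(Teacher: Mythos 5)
Your proposal is correct and follows essentially the same route as the paper's proof: the identical telescoping decomposition $\mu_1-\mu_2=(\mu_1-\mu_2)P^{Q_2,\mu_2}+\mu_1(P^{Q_1,\mu_1}-P^{Q_2,\mu_2})$, the Doeblin minorization argument from Proposition~\ref{prop:conv_mu} giving the factor $2(1-\beta)$ on the first term, and Assumption~\ref{assump:p} on the second, yielding $(2\beta-1-L_p)\|\mu_1-\mu_2\|_{\text{TV}}\le L_Q\|Q_1-Q_2\|_\infty$. Your added remark that $\mu_2$ is itself the equilibrium to which the Doeblin condition applies is a correct and worthwhile clarification of a point the paper leaves implicit.
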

\begin{proof}
The proof resembles the proof of Proposition \ref{prop:conv_mu}. Note that
\begin{equation}\label{jan2941}
    \|\mu_1-\mu_2\|_{\text{TV}} = \|\mu_1 P^{Q_1,\mu_1}-\mu_2P^{Q_2,\mu_2}\|_{\text{TV}} \leq\|(\mu_1-\mu_2)P^{Q_2,\mu_2}\|_{\text{TV}}+ \|\mu_1\big(P^{Q_1,\mu_1}-P^{Q_2, \mu_2}\big)\|_{\text{TV}}.
\end{equation}
The first term above can be treated in the same way as for part $I$ in (\ref{jan293}) to have the bound
\begin{equation}
    \|(\mu_1-\mu_2)P^{Q_2,\mu_2}\|_{\text{TV}} \leq 2(1-\beta) \|\mu_1-\mu_2\|_{\text{TV}}.
\end{equation}
The second term in (\ref{jan2941}) uses Assumption \ref{assump:p} so that
\begin{equation}
    \|\mu_1\big(P^{Q_1,\mu_1}-P^{Q_2, \mu_2}\big)\|_{\text{TV}} \leq L_p \|\mu_1-\mu_2\|_{\text{TV}} + L_Q\|Q_1-Q_2\|_{\infty}.
\end{equation}
Combining all terms together we have
\begin{equation}
    (2\beta-1-L_p)\|\mu_1-\mu_2\|_{\text{TV}} \leq L_Q\|Q_1-Q_2\|_{\infty}
\end{equation}
to obtain the relation.
\end{proof}
Based on Lemma \ref{lem:mu_and_Q}, we can control the difference between $\tilde{\mu}_k$ and $\tilde{\mu}^*$ to be
\begin{equation}\label{eq:mu_Q}
    \|\tilde \mu_k-\tilde{\mu}^*\|_{\text{TV}} \leq \frac{L_Q}{2\beta-1-L_p} \|Q_{h,k}- Q_h^*\|_\infty.
\end{equation}

\begin{proof}[Proof of Proposition \ref{prop:brouwer}] Consider the mapping $\mathcal{M}_Q:\mathcal{P}(\mathcal{X})\to \mathcal{P}(\mathcal{X})$ such that $\mathcal{M}_Q(\mu) = \mu P^{Q,\mu}$. For a given $Q$, $\mathcal{M}_Q$ is continuous since
\begin{equation}
\begin{aligned}
    \|\mathcal{M}_Q(\mu_1)-\mathcal{M}_Q(\mu_2)\|_{\text{TV}} &= \|\mu_1P^{Q,\mu_1}-\mu_2P^{Q,\mu_2}\|_{\text{TV}}\\
    &\leq \|(\mu_1-\mu_2)P^{Q,\mu_1}\|_{\text{TV}}+\|\mu_2(P^{Q,\mu_1}-P^{Q,\mu_2})\|_{\text{TV}}\\
    &\leq 2(1-\beta)\|\mu_1-\mu_2\|_{\text{TV}}+ L_p \|\mu_1-\mu_2\|_{\text{TV}}\\
    &= (2(1-\beta)+L_p)\|\mu_1-\mu_2\|_{\text{TV}}.
\end{aligned}
\end{equation}
Because $\mathcal{X}$ is finite, by Brouwer's fixed point theorem, there exists $\mu$ such that $\mathcal{M}_Q(\mu)=\mu$ for the given $Q$. This fixed point $\mu$ is unique due to \eqref{eqn:uniqueness}.
For equations \eqref{eqn:conv_bellman} and \eqref{eqn:conv_tildemu}, we define the Bellman operator
\begin{equation}\label{Bellman_operator}
    \mathcal{B}_{\mu} (Q)(x,a) := hf(x,a,\mu)+e^{-\gamma h} \sum_{x'}   p(x'\mid x, a,\mu)\min_{a'} Q(x',a').
\end{equation}
The mapping pair $(\mathcal{B}_{\mu} (Q), \mathcal{M}_Q(\mu)): \Rm_+\times \mathcal{P}(\mathcal{X})\to \Rm_+\times \mathcal{P}(\mathcal{X})$ is continuous since
\begin{equation}
\begin{aligned}
   \|\mathcal{B}_{\mu_1} (Q_1)-\mathcal{B}_{\mu_2} (Q_2)\|_\infty &\leq \|\mathcal{B}_{\mu_1} (Q_1)-\mathcal{B}_{\mu_2} (Q_1)\|_\infty+\|\mathcal{B}_{\mu_2} (Q_1)-\mathcal{B}_{\mu_2} (Q_2)\|_\infty  \\
   &\leq (hL_f+e^{-\gamma h}L_p\|Q\|_\infty)\|\mu_1-\mu_2\|_{\text{TV}}+e^{-\gamma h} \|Q_1-Q_2\|_\infty,
   \end{aligned}
\end{equation}
and 
\begin{equation}
\begin{aligned}
     \|\mathcal{M}_{Q_1}(\mu_1)-\mathcal{M}_{Q_2}(\mu_2)\|_{\text{TV}} &\leq \|\mathcal{M}_{Q_1}(\mu_1)-\mathcal{M}_{Q_2}(\mu_1)\|_{\text{TV}} +\|\mathcal{M}_{Q_2}(\mu_1)-\mathcal{M}_{Q_2}(\mu_2)\|_{\text{TV}} \\
     &\leq L_Q \|Q_1-Q_2\|_\infty+(2(1-\beta)+L_p)\|\mu_1-\mu_2\|_{\text{TV}}.
     \end{aligned}
\end{equation}
Thus by Brouwer's fixed point theorem, there exists a fixed point $(Q_h^*, \tilde\mu^*)$ solving \eqref{eqn:conv_bellman} and \eqref{eqn:conv_tildemu}.

\end{proof}

\subsection{Proof of Theorem \ref{thm:main}}
\begin{proof}
The proof strategy is to obtain the iteration inequality in the form
\begin{equation*}
    \begin{aligned}
 \mathcal{L}_{k+1} -\mathcal{L}_k &=W\min_{Q_h^*\in F}\|Q_{h,k+1}- Q_h^*\|_\infty-W\min_{Q_h^*\in F}\|Q_{h,k}- Q_h^*\|_\infty \\
 & \qquad + \|\mu_{k+1}-\tilde \mu_{k+1}\|_{\text{TV}}-\|\mu_k-\tilde{\mu}_{k}\|_{\text{TV}}\\&\leq -c \mathcal{L}_k + e_k,
    \end{aligned}
\end{equation*}
with some $c\in(0,1)$ and bounded errors $e_k$. Then by iteration, we can get that for each $k\geq 1$, 
\begin{equation}
    \mathcal{L}_k \leq (1-c)^k \mathcal{L}_0+\sum_{j=0}^{k-1}(1-c)^{k-1-j} e_j.
\end{equation}
Throughout the remaining proof, we abuse the notation $Q_h^*$ to denote the minimizer of $\min_{Q_h^*\in F}\|Q_{h,k}- Q_h^*\|_\infty$ in $\mathcal{L}_k$, with such a $Q_h^*$, $\tilde\mu^*$ is also uniquely determined by Lemma~\ref{lem:mu_and_Q}.

\paragraph{Estimate on $\mu_k-\tilde\mu_k$}
The proof of Proposition~\ref{prop:conv_mu} implies that
\begin{equation}
    \begin{aligned}
 \|\mu_{k+1}-\tilde \mu_{k+1}\|_{\text{TV}}\leq \Lambda_{\mu} \|\mu_{k}-\tilde \mu_{k+1}\|_{\text{TV}}.
    \end{aligned}
\end{equation}
Therefore, in addition with the triangle inequality, we can write 
\begin{equation}\label{dec07}
    \begin{aligned}
         \|\mu_{k+1}&-\tilde \mu_{k+1}\|_{\text{TV}}-\|\mu_k-\tilde{\mu}_{k}\|_{\text{TV}}\leq\Lambda_\mu\|\mu_{k}-\tilde{\mu}_{k+1}\|_{\text{TV}}-\|\mu_k-\tilde{\mu}_{k}\|_{\text{TV}}\\& \leq \Lambda_\mu\|\tilde\mu_{k+1}-\tilde{\mu}_{k}\|_{\text{TV}} - (1-\Lambda_{\mu}) \|\mu_k-\tilde{\mu}_{k}\|_{\text{TV}}\\
         &\leq \Lambda_\mu \big( \|\tilde \mu_k-\tilde{\mu}^*\|_{\text{TV}}+ \|\tilde \mu_{k+1}-\tilde{\mu}^*\|_{\text{TV}}\big) - (1-\Lambda_{\mu}) \|\mu_k-\tilde{\mu}_{k}\|_{\text{TV}}\\
         &\leq \frac{\Lambda_\mu L_Q}{2\beta-1-L_p}\Big(\|Q_{h,k}- Q_h^*\|_\infty+\|Q_{h,k+1}- Q_h^*\|_\infty\Big)- (1-\Lambda_{\mu}) \|\mu_k-\tilde{\mu}_{k}\|_{\text{TV}}\\
         &\leq \frac{2\Lambda_\mu L_Q}{2\beta-1-L_p }\|Q_{h,k}- Q_h^*\|_\infty+\frac{\Lambda_\mu L_Q \rho^Q }{2\beta-1-L_p}\|\mathcal{T}(\cdot, \cdot)\|_{\infty}- (1-\Lambda_{\mu}) \|\mu_k-\tilde{\mu}_{k}\|_{\text{TV}},
    \end{aligned}
\end{equation}
where the last inequality is obtained by the Q iteration in \eqref{eqn:alg} and the uniform boundedness of the operator $\mathcal{T}$ over $Q,\mu$.

\paragraph{Estimate on $Q_{h,k}-Q_h^*$} 
Based on Proposition~\ref{prop:conv_Q}, we have that (recall that with a bit abuse of notation we denote $Q_h^*$ the minimizer of $\min_{Q_h^*\in F}\|Q_{h,k}- Q_h^*\|_\infty$; in particular, we have $\min_{Q_h^*\in F}\|Q_{h,k+1}- Q_h^*\|_\infty \leq \|Q_{h,k+1}- Q_h^*\|_\infty$)
\begin{equation}\label{dec08}
    \begin{aligned}
         &\min_{Q_h^*\in F}\|Q_{h,k+1}- Q_h^*\|_\infty-\min_{Q_h^*\in F}\|Q_{h,k}- Q_h^*\|_\infty\leq \|Q_{h,k+1}- Q_h^*\|_\infty-\|Q_{h,k}- Q_h^*\|_\infty\\
        &\leq -\rho^Q(1-e^{-\gamma h})\|Q_{h,k}- Q_h^*\|_\infty+\rho^Q h  \Big(L_f + \frac{L_p}{e^{\gamma h}-1} \|f\|_{\infty} \Big) \|\mu_k-\tilde \mu^*\|_{\text{TV}}\\ 
        &\leq -\rho^Q(1-e^{-\gamma h})\|Q_{h,k}- Q_h^*\|_\infty+\rho^Q h \Big(L_f + \frac{L_p}{e^{\gamma h}-1} \|f\|_{\infty} \Big)
         \Big(\|\mu_k - \tilde{\mu}_k\|_{\text{TV}}+ \|\tilde{\mu}_k - \tilde{\mu}^*\|_{\text{TV}}\Big)\\
        &\leq -\rho^Q(1-e^{-\gamma h})\|Q_{h,k}- Q_h^*\|_\infty\\
        &\quad+\rho^Q h \Big(L_f + \frac{L_p}{e^{\gamma h}-1} \|f\|_{\infty} \Big)
         \Big(\|\mu_k - \tilde{\mu}_k\|_{\text{TV}}+ \frac{L_Q}{2\beta-1-L_p} \|Q_{h,k}- Q_h^*\|_\infty\Big),
    \end{aligned}
\end{equation}
where in the last inequality we use \eqref{eq:mu_Q}.

\paragraph{Combined estimates}
Now we are ready to combine \eqref{dec07} and \eqref{dec08} together and obtain that
\begin{equation*}
    \begin{aligned}
 &\mathcal{L}_{k+1} -\mathcal{L}_k \\
 &\leq \Big(-W\rho^Q(1-e^{-\gamma h})+W\rho^Q h \Big(L_f + \frac{L_p\|f\|_{\infty}}{e^{\gamma h}-1}  \Big)\frac{L_Q}{2\beta-1-L_p} +\frac{2\Lambda_\mu L_Q}{2\beta-1-L_p }\Big)\|Q_{h,k}- Q_h^*\|_{\infty}\\
 &\quad- \Big(1-\Lambda_{\mu} -W\rho^Q h \Big(L_f + \frac{L_p\|f\|_{\infty} }{e^{\gamma h}-1} \Big)
    \Big) \|\mu_k-\tilde{\mu}_{k}\|_{\text{TV}}+\frac{\Lambda_\mu L_Q\rho^Q}{2\beta-1-L_p } \|\mathcal{T}(\cdot, \cdot)\|_{\infty}\\
    &\leq -c \mathcal{L}_k + e_k.
    \end{aligned}
\end{equation*}
This inequality holds if we let 
\begin{equation}\label{dec_const}
\begin{aligned}
    &c_1: = \rho^Q\Big(1-e^{-\gamma h}-  \Big(L_f + \frac{L_p\|f\|_{\infty}}{e^{\gamma h}-1}  \Big)\frac{hL_Q}{2\beta-1-L_p} \Big)-\frac{2\Lambda_\mu L_Q}{W(2\beta-1-L_p) },\\
    &c_2:= 1-\Lambda_{\mu} -W\rho^Q h \big(L_f + \frac{L_p\|f\|_{\infty} }{e^{\gamma h}-1} \big),
    \end{aligned}
\end{equation}
and we require that
\begin{equation}\label{dec10}
    c: = \min\{c_1, c_2\} \in(0,1),
\end{equation}
with the error term denoted as
\begin{equation}
    e_k := \frac{\Lambda_\mu L_Q\rho^Q}{2\beta-1-L_p } \|\mathcal{T}(\cdot, \cdot)\|_{\infty}.
\end{equation}
Note that by \eqref{eqn:alg} and \eqref{eq:boundQ}, $\|\mathcal{T}(\cdot, \cdot)\|_{\infty}$ is bounded by
\begin{equation}
    \|\mathcal{T}(\cdot, \cdot)\|_{\infty}\leq h\|f\|_\infty+ (e^{-\gamma h}+1)\|Q_{h,k}\|_{\infty}\leq \frac{2h}{1-e^{-\gamma h}}\|f\|_{\infty}.
\end{equation}
Eventually we have the convergence
\begin{equation*}
\begin{aligned}
    \mathcal{L}_k &\leq (1-c)^k \mathcal{L}_0 + \frac{2h\Lambda_{\mu}L_Q\rho^Q \|f\|_{\infty}}{(1-e^{-\gamma h})(2\beta-1-L_p)} \sum_{j=0}^{k-1}(1-c)^{k-1-j} \\
    & \leq (1-c)^k \mathcal{L}_0  + \frac{2h\Lambda_{\mu}L_Q\rho^Q \|f\|_{\infty}}{c(1-e^{-\gamma h})(2\beta-1-L_p)}.  
    \end{aligned}
\end{equation*}
\end{proof}



\section{Numerical experiment}\label{sec:numeric}
We carry out some numerical experiments to  validate our convergence result of \eqref{eqn:alg} with different ratios of fixed learning rates $\rho^{\mu}$ and $\rho^Q$. 
Our examples are adapted from those of \cite{angiuli2022unified} with slight modifications.
The algorithm we use is sample-based with stochastic approximations to the iterations~\eqref{eqn:alg}.
For better control of the numerical comparison, we use the maximum number of iterations $N_k$ as stopping criterion.

\begin{algorithm}[ht]
\caption{Unified Two-timescales Q-learning - Tabular version} 
\begin{algorithmic}[1]
\REQUIRE $T$ : number of time steps in a learning episode, 
\\ $\mathcal{{X}} =\{ x_0, \dots, x_{|\mathcal{{X}}|-1}\}$ : finite state space. \\ 
$\mathcal{{A}} =\{ a_0, \dots,
a_{|\mathcal{{A}}|-1}\}$ : finite action space. \\
$\mu_0$ : initial distribution of the representative player.\\ 
$\epsilon$ : parameter related to the $\epsilon$-greedy policy.\\
$N_k$ : number of episodes.\\
$\gamma, h:$ fixed constants.
\STATE \textbf{Initialization}: $Q^0(x,a)=0$ for all $(x,a)\in \mathcal{{X}}\times \mathcal{{A}}$,  $\mu^0_{n}=\left(\frac{1}{|\mathcal{{X}}|}, \dots,\frac{1}{|\mathcal{{X}}|}\right)$ for $n=0, \dots, T$ 

\FOR{each episode $k=1,2,\dots N_k$}
\STATE \textbf{Initialization:} Sample $X^k_{0} \sim \mu^{k-1}_{T}$ and set $Q^k \equiv Q^{k-1}$  
\FOR{$ n \gets 0$ to $T-1$} 
\STATE
\textbf{Update} $\mu$: \\
$\mu^{k}_{n} = \mu^{k-1}_{n} + \rho^{\mu} (\bm{\delta}(X^k_{n}) - \mu^{k-1}_{n})$ where $\bm{\delta}(X^k_{n}) = \left(\mathbf{1}_{x_0}(X^k_{n}), \dots, \mathbf{1}_{x_{|\mathcal{X}|-1}}(X^k_{n})\right) $
\STATE
\textbf{Choose action $A^k_{n}$} using the $\epsilon$-greedy policy derived from $Q^k(X^k_{n},\cdot)$  
\\
\textbf{Observe cost $f_{n+1}=f(X^k_{n},A^k_{n},\mu^{k}_{n})$} and state $X^k_{n+1}$ provided by the environment %

\STATE
\textbf{Update} $Q$:\\ $Q^k(X^k_{n},A^k_{n})= Q^k(X^k_{n},A^k_{n})+\rho^Q [h f_{n+1} +e^{-\gamma h} \min_{a'\in \mathcal{A}} Q^k(X^k_{n+1},a')-Q^k(X^k_{n},A^k_{n})]$
\ENDFOR
\ENDFOR
\RETURN $(\mu^k,Q^k)$
\end{algorithmic}
\end{algorithm}

\FloatBarrier

\paragraph{Benchmark problem}
For MFG and MFC problems introduced in \eqref{opt_prob-mfg} and \eqref{opt_prob-mfc},
we take $\mathcal{X}, \mathcal{A} \subset \mathbb{R}$, and define the cost function
\begin{equation}\label{eq: benchmark}
  f(x,\alpha,\mu) = \frac{1}{2} \alpha^2 + c_1 \left( x- c_2 m\right)^2 + c_3 \left( x- c_4 \right)^2 + c_5 m^2,
    \qquad
    b(x, \alpha,\mu) = \alpha,   
\end{equation}
where $m = \sum_{x \in \mathcal{X}} x \mu(x) $, 
$c_1=0.25$, $c_2=1.5$, $c_3=0.50$, $c_4=0.6$, $c_5=5$, 
 discount parameter $\gamma =1$ and volatility $\sigma = 0.3$. The infinite time horizon is truncated at time $T=20$. The continuous time is discretized using step $h= 0.01$.  
We adopt a larger action space $\mathcal{A} = \{ a_0=-2, \dots, a_{N_{\mathcal{A}}}=2 \}$ and the state space is $\mathcal{X} = \{ x_0=-2+x_c, \dots, x_{N_{\mathcal{X}}}=2+x_c \}$, where $x_c$ is the center of the state space.
The step size for the discretization of the state and action spaces  $\mathcal{X}$ and $\mathcal{A}$ is given by $\Delta = \sqrt{h} = 0.1$. 
For the discretization of the SDE $dX_t = \alpha_t dt + \sigma dB_t$, we consider the transition matrix given by  
\begin{equation}
   p(x'\mid x,a,\mu) \propto \mathbb{P}(Z \in [x'-\Delta/2, x'+\Delta/2])
\end{equation}
with $Z \sim \mathcal{N}(x+a, \sigma^2 h)$; the distribution is normalized to avoid any artifacts due to numerical approximations. 

We use the unified two timescale mean field Q-learning algorithm in \cite{angiuli2022unified} with a fixed ratio of step sizes $\rho^Q$ and $\rho^{\mu}$.
In the Q-learning, we set the number of episodes $N_k=140000$ and the learning rates $\rho^Q = 0.02, \rho^{\mu} = 0.0001$ (and hence ratio $\rho^Q / \rho^{\mu} = 200$) for the MFG problem,
$ \rho^Q = 0.0001, \rho^{\mu} = 0.5$ (ratio $\rho^Q / \rho^{\mu} = 0.0002$) for the MFC problem.

\paragraph{Results}
We compare the numerical value functions achieved by the two-timescale Q-learning algorithm  with the calculated theoretical value functions: Figure~\ref{fig:a} plots value functions of MFG and Figure~\ref{fig:b} plots value functions of MFC problem.
One can calculate theoretical value functions from the HJB equations based on Theorem~\ref{thm:V-HJB}, and we refer computation details to  \cite[Appendix A]{angiuli2022unified}.
In addition, we present the optimal control function $\hat\alpha = \arg\min_{a} Q(x,a)$ and the theoretical optimal control function in Figure~\ref{fig:c} for MFG and in Figure~\ref{fig:d} for MFC problem.
Figure~\ref{fig:e} shows the empirical equilibrium distribution averaged over last $10000$ episodes in the unified two-timescale Q-learning algorithm with different ratios of learning rates $\rho^{\mu}, \rho^{Q}$.

\begin{figure}[!ht]
\centering
	\begin{subfigure}[t]{0.32\textwidth}
		\centering
		\includegraphics[width=1\textwidth]{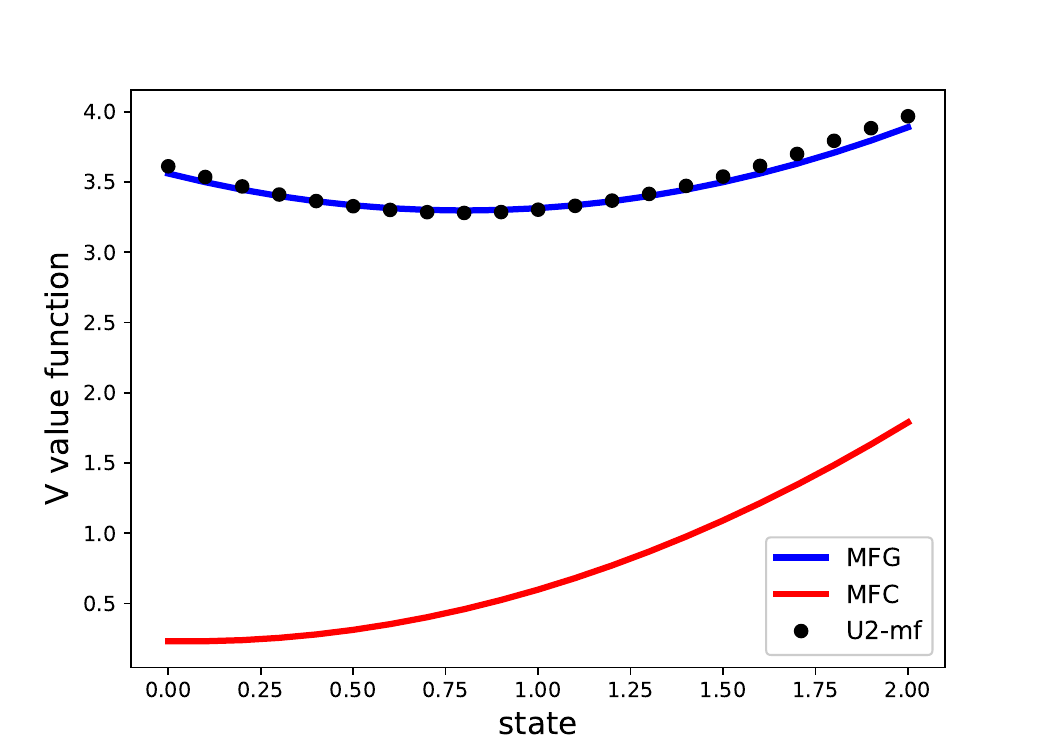}
		\caption{MFG Value function}
		\label{fig:a}
	\end{subfigure}
	\begin{subfigure}[t]{0.32\textwidth}
		\centering
		\includegraphics[width=1\textwidth]{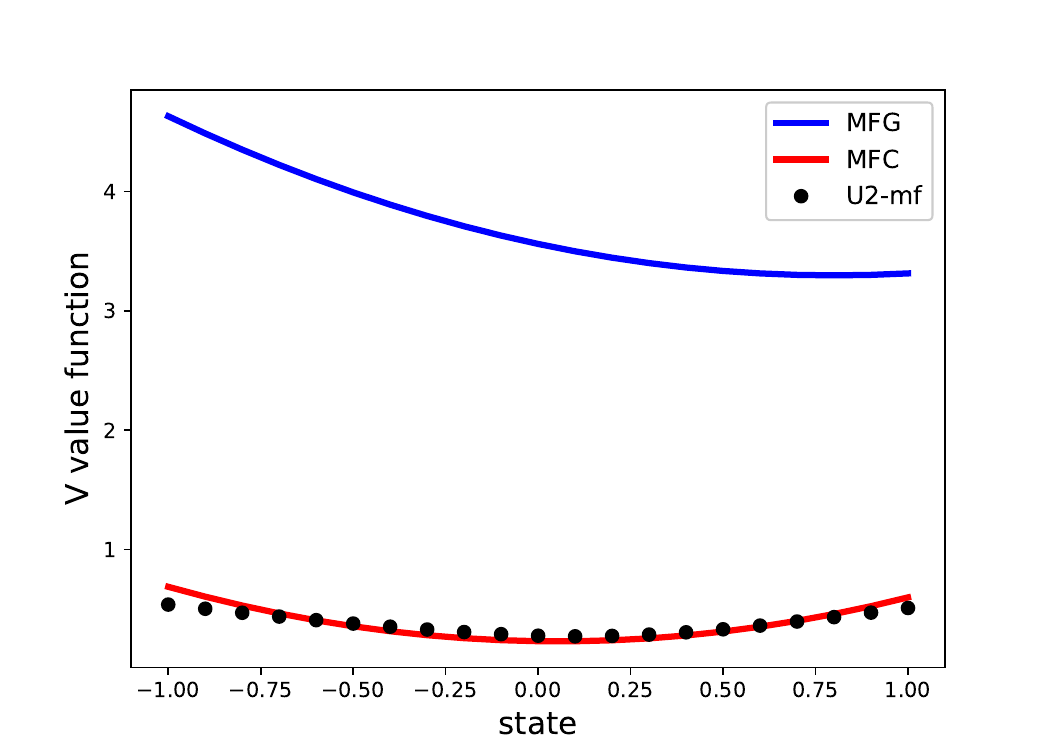}
		\caption{MFC Value function}
		\label{fig:b}
	\end{subfigure} 

 \begin{subfigure}[t]{0.32\textwidth}
		\centering
		\includegraphics[width=1\textwidth]{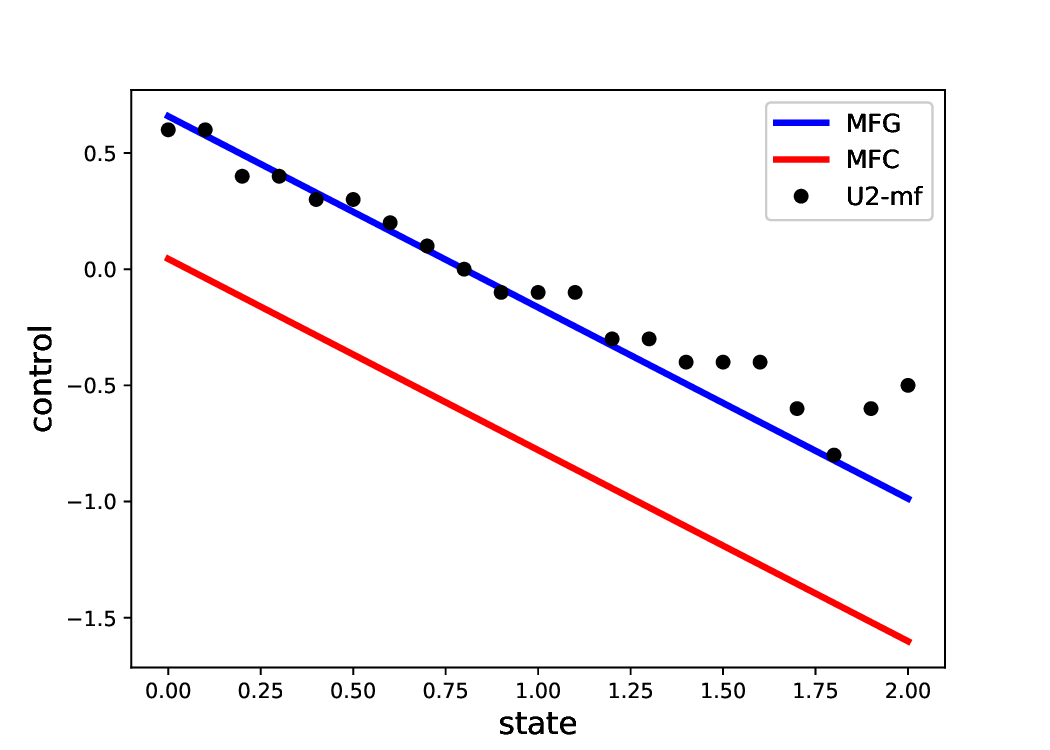}
		\caption{MFG control averaged over last 10000 episodes}
		\label{fig:c}
	\end{subfigure}
	\begin{subfigure}[t]{0.32\textwidth}
		\centering
		\includegraphics[width=1\textwidth]{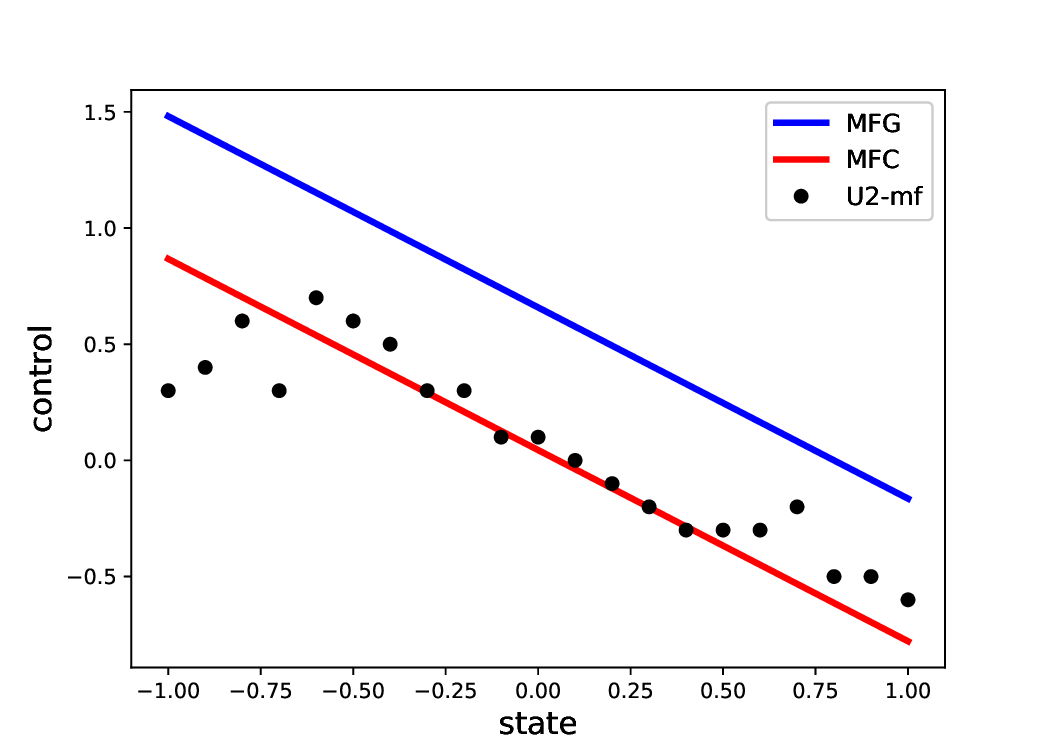}
		\caption{MFC control averaged over last 10000 episodes}
		\label{fig:d}
	\end{subfigure}	
        \begin{subfigure}[t]{0.32\textwidth}
		\centering
		\includegraphics[width= 1\textwidth]{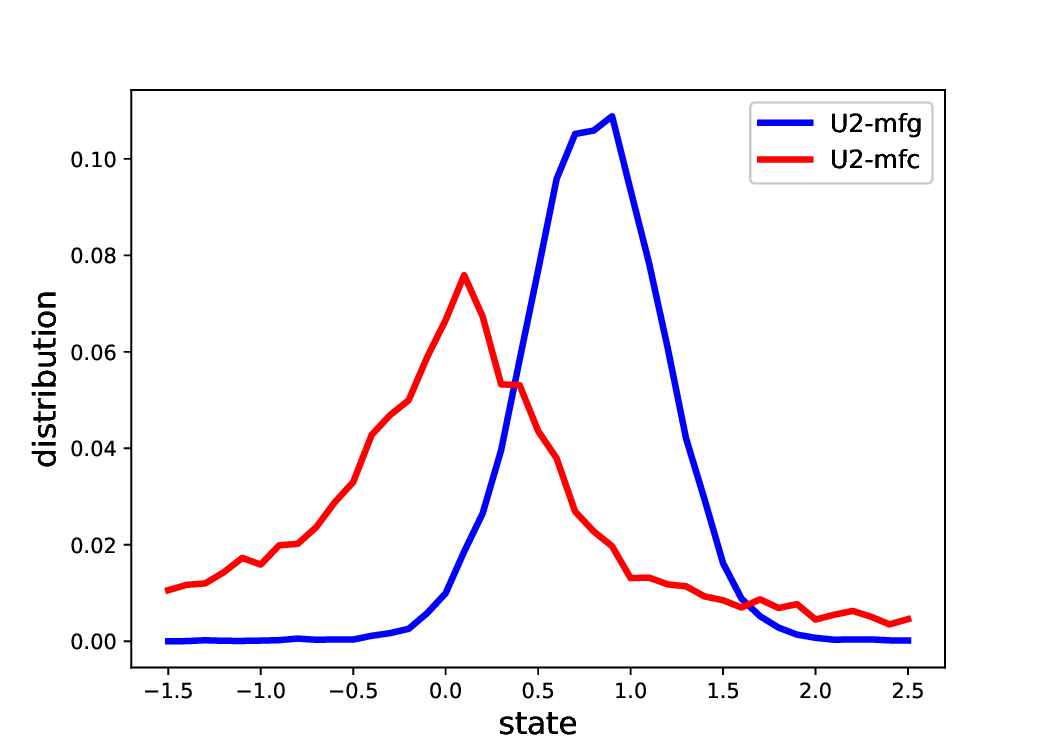}
		\caption{MFG and MFC distribution averaged over last 10000 episodes}
		\label{fig:e}
	\end{subfigure}

	\caption{Numerical results of the  two-timescale Q-learning algorithm where MFG learning rates are $\rho^Q = 0.02, \rho^{\mu} = 0.0001$, and
 MFC learning rates are $ \rho^Q = 0.0001, \rho^{\mu} = 0.5$. The theoretical value/control functions are represented by solid lines and numerical value/control functions are represented by dotted lines in figure~\ref{fig:a}-\ref{fig:d}.}	
\label{fig:compare}
\end{figure}

\paragraph{Intermediate ratios of $\rho^{Q}/\rho^{\mu}$} 
In addition to extreme ratios where numerically $\rho^{Q}/\rho^{\mu} =200$ for MFG problem and $\rho^{Q}/\rho^{\mu} =0.0002$ for MFC problem, we take some intermediate ratios where $ \rho^{Q}/\rho^{\mu} =10, 1, 0.1 $ and present the respective resulting value functions in Figure~\ref{fig:compareV}.
In the figure, the theoretical solutions are labeled ``MFG'' and ``MFC'' and represented by solid lines, and the numerical results of two timescale algorithm are labeled with prefix ``U2-'' and represented by dotted lines.
We observe in the intermediate regimes, the algorithms seem to converge to some solutions lying between the MFG and MFC value functions; while in this work we do not identify these limits, this would be an interesting future research direction.

\begin{figure}[htb]
		\centering
		\includegraphics[width= 8cm]{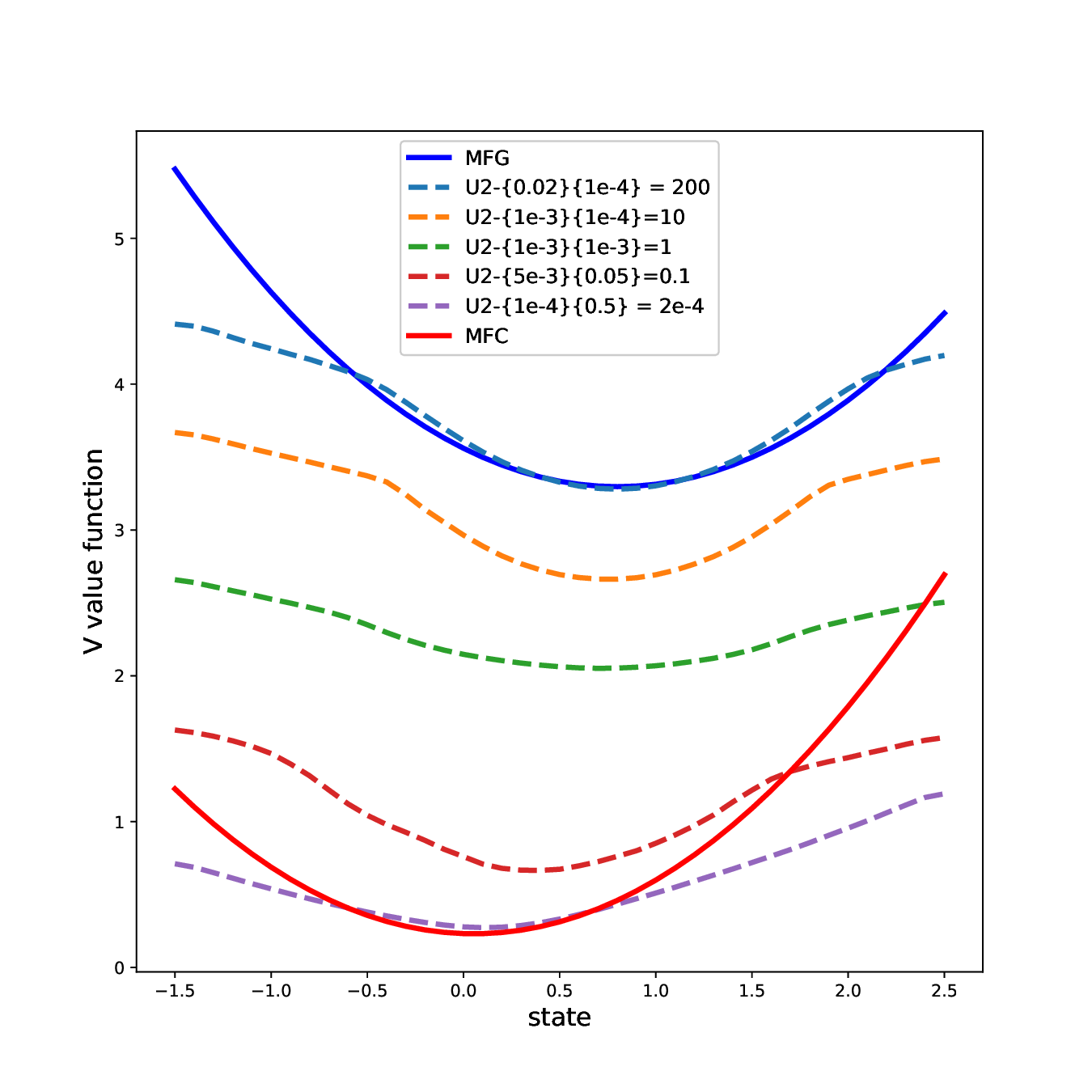}
         \caption{Numerical results of the  two-timescale Q-learning algorithm where intermediate ratios $\rho^{Q}/\rho^{\mu} =10, 1, 0.1$ are adopted, in addition to numerically extreme ratios $\rho^{Q}/\rho^{\mu} =200, 0.0002$.}
           \label{fig:compareV}
\end{figure}

\FloatBarrier

\section{Conclusion}\label{sec:conclusion}

In this work, by establishing the approximation diagram  Fig.~\ref{fig:roadmap}, we explain why the two-timescale Q-learning algorithm can converge to MFG or MFC solutions by tuning two learning rates.  Based on our constructed Lyapunov function, we provide a novel unified convergence result for the algorithm for all ranges of learning rate ratios. It would be interesting to investigate what type of problems that the two-timescale Q-learning algorithm solves when $0<\rho^Q/\rho^\mu<\infty$, as shown in Figure~\ref{fig:compareV}. We guess that for the intermediate regime, devising a mixed model of MFG and MFC might be a reasonable approach, and we leave it as our future work. We believe that the idea of this Lyapunov function construction can shed lights on convergence proofs for other algorithms in the study of MFC and MFG. 
\clearpage
\appendix
\section{}
\label{sec:appd}
The case of MFC problems need extra treatments due to the value function's dependence on the changing population distribution. We thus consult the It\^{o}-Lions' formula in Wasserstein space studied in \cite{buckdahn2017mean}, and give a brief review of additional required assumptions in order to apply this It\^{o}-Lions' formula for MFC. 

Consider the square-integrable space $\mathcal{P}(\mathcal{X})$, the lifting of functions $u: \mathcal{P}(\mathcal{X})\to \Rm$ is defined as $\tilde{u}(\xi):= u(P[\xi])$. We say that $u$ is differentiable (resp., $C^1$) on $\mathcal{P}(\mathcal{X})$ if the lift $\tilde{u}$ is Fr\'{e}chet differentiable on $L^2(\mathcal{F}; \mathcal{X})$, that is, there exists a linear continuous mapping $D\tilde{u}(\xi):L^2(\mathcal{F}; \mathcal{X})\to \Rm$ such that 
\begin{equation}
    \tilde u(\xi+\eta) - \tilde u(\xi) = D\tilde u(\xi)(\eta)+\lito(\|\eta\|),
\end{equation}
with $\|\eta\|\to 0$ for $\eta\in L^2(\mathcal{F}; \mathcal{X})$. On the law $P[\xi]$, for $\xi, \xi' \in L^2(\mathcal{F}; \mathcal{X})$, one can write
\begin{equation}
    u(P[\xi'])-u(P[\xi]) =\mathbb{E}[\d_{\mu} u(P[\xi], \xi)\cdot (\xi'-\xi)]+\lito(\|\xi'-\xi\|)
\end{equation}
to define $\d_\mu u$. Moreover, the second derivative is defined as
\begin{equation}
    \d_{\mu}^2 u (\mu,x,y): = \big(\d_{\mu}\big((\d_\mu u)_j(\cdot, y)\big)(\mu,x)\big)_{1\leq j\leq d},\quad \text{for}~(\mu,x,y)\in \mathcal{P}(\mathcal{X})\times \mathcal{X}\times \mathcal{X}.
\end{equation}
Let us state the expansion formula from \cite{buckdahn2017mean}:
\begin{lemma}[\cite{buckdahn2017mean}, Lemma 2.1]\label{lem:Ito}
If $(\d_\mu u)_j(\cdot, y)\in C_b^{1,1}(\mathcal{P}(\mathcal{X}))$ for all $y\in\mathcal{X}, 1\leq j\leq d$, $\d_\mu u(\mu,\cdot)$ is differentiable for every $\mu\in\mathcal{P}(\mathcal{X})$, and $\d_{\mu}^2 u, \d_y\d_\mu u $ are bounded and Lipschitz continuous,  then one has the second-order expansion
\begin{equation}
\begin{aligned}
    u(P[\xi'])-u(P[\xi])&= \mathbb{E}[\d_\mu u(P[\xi], \xi) \cdot \eta]+\frac{1}{2} \mathbb{E}\Big[\tilde{\mathbb{E}}\big[\Tr\big(\d_{\mu}^2 u(P[\xi], \tilde{\xi}, \xi)\cdot \tilde{\eta}\otimes\eta\big)\big]\Big]\\
    &\quad +\frac{1}{2}\mathbb{E}\Big[\Tr\big(\d_y \d_{\mu} u(P[\xi], \xi)\cdot \eta\otimes\eta\big) \Big] + O(\|\eta\|^3)
\end{aligned}
\end{equation}
where $\eta = \xi'-\xi$, and $\tilde{\mathbb{E}}[\cdot]=\int_{\tilde{\mathcal{X}}}(\cdot) d\tilde{P}$ associated with the copy $(\tilde{\mathcal{X}}, \tilde{\mathcal{F}}, \tilde{P})$ where $\tilde{P}[\tilde{\xi}] = P[\xi]$.
\end{lemma}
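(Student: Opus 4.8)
The plan is to lift everything to the Hilbert space $L^2(\mathcal{F};\mathcal{X})$ and run an ordinary second-order Taylor expansion there, then reinterpret each term through the Lions derivatives. First I would set $\tilde u(\xi)=u(P[\xi])$ and check, from the stated hypotheses, that $\tilde u$ is twice Fr\'echet differentiable with a Lipschitz second derivative: the requirements that $(\partial_\mu u)_j(\cdot,y)\in C_b^{1,1}$, that $\partial_\mu u(\mu,\cdot)$ be differentiable, and that $\partial_\mu^2 u$ and $\partial_y\partial_\mu u$ be bounded and Lipschitz are precisely what upgrade $\tilde u$ to a $C^{2}$ map with Lipschitz Hessian. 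Granting this, the Taylor theorem on the Hilbert space gives
\[
\tilde u(\xi+\eta)-\tilde u(\xi)=D\tilde u(\xi)(\eta)+\tfrac12 D^2\tilde u(\xi)(\eta,\eta)+O(\|\eta\|^3),
\]
the cubic remainder being controlled by the Lipschitz constant of $D^2\tilde u$.

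The first-order term is immediate from the definition of the Lions derivative: $D\tilde u(\xi)(\eta)=\mathbb{E}[\partial_\mu u(P[\xi],\xi)\cdot\eta]$. The crux is identifying the second-order term. I would write $D\tilde u(\xi)=\langle \Phi(\xi),\cdot\rangle_{L^2}$ with the $L^2$-valued gradient $\Phi(\xi)(\omega)=\partial_\mu u(P[\xi],\xi(\omega))$, so that $D^2\tilde u(\xi)(\eta,\eta)=\langle D\Phi(\xi)[\eta],\eta\rangle$. The point is that $\Phi$ depends on $\xi$ in two distinct ways, through the law $P[\xi]$ and through the pointwise value $\xi(\omega)$, so the chain rule splits $D\Phi(\xi)[\eta]$ into two contributions.

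Differentiating the pointwise-value dependence produces the diagonal term $\partial_y\partial_\mu u(P[\xi],\xi)\cdot\eta$, which after pairing with $\eta$ yields $\tfrac12\mathbb{E}[\Tr(\partial_y\partial_\mu u(P[\xi],\xi)\cdot\eta\otimes\eta)]$. Differentiating the law-dependence is where the independent copy must enter: applying $\partial_\mu$ to the map $\mu\mapsto\partial_\mu u(\mu,\xi(\omega))$ at $\mu=P[\xi]$ introduces an integration against a copy $(\tilde{\mathcal X},\tilde{\mathcal F},\tilde P)$, giving $\tilde{\mathbb{E}}[\partial_\mu^2 u(P[\xi],\tilde\xi,\xi)\cdot\tilde\eta]$, and pairing with $\eta$ produces $\tfrac12\mathbb{E}[\tilde{\mathbb{E}}[\Tr(\partial_\mu^2 u(P[\xi],\tilde\xi,\xi)\cdot\tilde\eta\otimes\eta)]]$. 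Summing the two contributions reproduces exactly the stated expansion.

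The main obstacle is rigorously justifying this decomposition of $D^2\tilde u$: one must prove that differentiating the measure-argument of $\Phi$ genuinely reduces to the second Lions derivative evaluated on an \emph{independent} copy $\tilde\xi$ rather than on the same randomness, and that the law-contribution and the pointwise-contribution are additive and non-overlapping. This requires unfolding the very definition of $\partial_\mu$ (itself a Fr\'echet derivative of a lift along perturbations $\xi\mapsto\xi+\eta$), together with care over the tensor structure $\tilde\eta\otimes\eta$ and the symmetry properties of $\partial_\mu^2 u$. A clean route is to first establish the formula for empirical measures, where $P[\xi]$ reduces to a finite average and both derivatives become ordinary partials, and then pass to the limit using the boundedness and Lipschitz regularity of $\partial_\mu^2 u$ and $\partial_y\partial_\mu u$; that same regularity is what collects the errors from each chain-rule step into a single $O(\|\eta\|^3)$ remainder.
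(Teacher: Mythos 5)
This lemma is not proved in the paper at all: it is imported verbatim from \cite{buckdahn2017mean} (their Lemma 2.1) and used as a black box in the proof of Theorem~\ref{thm:conv_formal}, so there is no in-paper argument to compare yours against. Your outline --- lift $u$ to $\tilde u$ on $L^2(\mathcal{F};\mathcal{X})$, Taylor-expand to second order in the Hilbert space, and identify $D\tilde u$ and $D^2\tilde u$ with the Lions derivatives --- is the standard route and is essentially the strategy of the cited source, so at the level of approach you are on target.

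As a proof, however, two steps you gloss over are exactly where the content lies. First, the identification
\begin{equation*}
D^2\tilde u(\xi)(\eta,\eta)=\mathbb{E}\Big[\tilde{\mathbb{E}}\big[\Tr\big(\d_{\mu}^2 u(P[\xi],\tilde{\xi},\xi)\cdot\tilde{\eta}\otimes\eta\big)\big]\Big]+\mathbb{E}\Big[\Tr\big(\d_y\d_{\mu}u(P[\xi],\xi)\cdot\eta\otimes\eta\big)\Big]
\end{equation*}
is not a consequence of a routine chain rule: the map $\xi\mapsto\Phi(\xi)=\d_\mu u(P[\xi],\xi(\cdot))$ depends on $\xi$ both through its law and through its pointwise values, and proving that the law-contribution is computed on an \emph{independent} copy $\tilde\xi$ (rather than re-entangled with the same randomness) requires unwinding the definition of $\d_\mu$ as a Fr\'echet derivative of a lift and verifying that the resulting bilinear form is well defined independently of the representative $\xi$ of the law. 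You correctly name this as "the main obstacle," but naming it is not closing it; your suggested reduction to empirical measures plus a density/limiting argument is plausible but is itself the bulk of the work. Second, your remainder estimate leans on $D^2\tilde u$ being Lipschitz in the $L^2$ operator norm, and this does \emph{not} follow directly from the assumed Lipschitz continuity of $\d_\mu^2 u$ and $\d_y\d_\mu u$ in their state arguments: perturbing $\xi$ to $\xi+\delta$ in the diagonal term produces expressions of the form $\mathbb{E}\big[|\delta|\,|\eta|^2\big]$ on a single probability space, which Cauchy--Schwarz does not reduce to $\|\delta\|_{L^2}\|\eta\|_{L^2}^2$ (take $\delta=\eta$ to see that $\mathbb{E}[|\eta|^3]$ can exceed $\|\eta\|_{L^2}^3$ by an arbitrary factor). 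The independent-copy term factors and is fine, but the diagonal term forces either higher-moment norms in the remainder or an interpolation argument; this is why the source states its remainder with more care than a bare $O(\|\eta\|_{L^2}^3)$. For the use made of the lemma in Theorem~\ref{thm:conv_formal} this distinction is harmless, but your proof as written does not establish the bound it claims.
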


Now we are ready to restate the Theorem~\ref{thm:conv_h} with complete assumptions.
\begin{thm}\label{thm:conv_formal} We assume that $\alpha$ is Lipschitz in $x$: there exists a constant $C_{\alpha}>0$ such that
\begin{equation}
    \|\alpha(x_1)-\alpha(x_2)\|\leq C_{\alpha}\|x_1-x_2\|.
\end{equation}
    With Assumptions~\ref{assump:f}, \ref{assump:f_extra}, and \ref{assump:b-sigma}, in addition to  assumptions on $f$ in order to apply Lemma \ref{lem:Ito}, we have the approximations that, 
for all $x \in {\mathcal{X}}$,
\begin{equation}
\begin{aligned}
&V_{h, \text{MFG}}^{\mu,\alpha}(x) = V_{\text{MFG}}^{\mu,\alpha}(x) + \bigo(h^{1/2}), \\
&V_{h, \text{MFC}}^{\alpha} (x)  = V_{\text{MFC}}^{\alpha}(x) + \bigo(h^{1/2}), 
\end{aligned}
\end{equation}
when $h\to 0$.
\end{thm}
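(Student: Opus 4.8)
The plan is to prove both limits by a single device: extend the one-step (telescoping) argument behind Lemma~1 of \cite{tallec19a} from controlled ODEs to controlled diffusions. The only structural change is that over a time step the diffusive displacement scales like $\sqrt{h}$ instead of $h$, and this is exactly what degrades the deterministic $O(h)$ rate to $O(h^{1/2})$. First I would record the two one-step identities. The continuous value function obeys the flow/dynamic-programming relation $V^{\mu,\alpha}(x)=\mathbb{E}\big[\int_0^h e^{-\gamma s}f(X_s,\alpha_s,\mu)\,ds\,\big|\,X_0=x\big]+e^{-\gamma h}\,\mathbb{E}[V^{\mu,\alpha}(X_h)\mid X_0=x]$, whereas the discrete one obeys $V_h^{\mu,\alpha}(x)=h\,f(x,\alpha(x),\mu)+e^{-\gamma h}\,\mathbb{E}[V_h^{\mu,\alpha}(X_1)\mid X_0=x]$ with $X_1\sim p_h(\cdot\mid x,\alpha(x),\mu)$. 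Subtracting and inserting $\pm\,e^{-\gamma h}\mathbb{E}[V^{\mu,\alpha}(X_1)]$ expresses the error $e_h:=V_h^{\mu,\alpha}-V^{\mu,\alpha}$ as a self-referential term $e^{-\gamma h}\mathbb{E}[e_h(X_1)]$ plus two genuinely local defects; passing to $\|\cdot\|_\infty$ and absorbing the self-referential term through the discount gives $\|e_h\|_\infty\le(\|A\|_\infty+\|C\|_\infty)/(1-e^{-\gamma h})$, where $A$ is the running-cost defect and $C$ the one-step propagation defect.

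Next I would estimate the two local defects by SDE regularity. For $A(x)=h f(x,\alpha(x),\mu)-\mathbb{E}\int_0^h e^{-\gamma s}f(X_s,\alpha_s,\mu)\,ds$, split off the factor $1-e^{-\gamma s}=O(s)$ and bound the remaining integrand using Assumption~\ref{assump:f_extra} (Lipschitzness of $f$ in $x,a$) together with the assumed Lipschitzness of $\alpha$, reducing everything to the displacement $\mathbb{E}\|X_s-x\|$. Boundedness of $b,\sigma$ (Assumption~\ref{assump:b-sigma}) and the It\^o isometry yield $\mathbb{E}\|X_s-x\|=O(\sqrt{s})$, so $\int_0^h O(\sqrt s)\,ds=O(h^{3/2})$ and $\|A\|_\infty=O(h^{3/2})$. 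For $C(x)=\mathbb{E}[V^{\mu,\alpha}(X_1)]-\mathbb{E}[V^{\mu,\alpha}(X_h)]$ I would couple the single Euler step defining $p_h$ with the exact solution over $[0,h]$ driven by the same Brownian path, so that the martingale parts cancel and only the drift defect survives; this plus Lipschitzness of $V^{\mu,\alpha}$ bounds $C$ by the same $\sqrt{s}$-driven scaling. Since $1-e^{-\gamma h}\sim\gamma h$, dividing by it converts $O(h^{3/2})$ into $O(h^{1/2})$, which is the MFG statement; the Lipschitz bound on $V^{\mu,\alpha}$ itself follows from the standard flow estimate once $\gamma$ exceeds the Lipschitz constants of $b,\sigma,\alpha$, which I would adopt as a standing requirement so that the infinite-horizon sums converge.

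For MFC the analysis is identical except that the measure slot is filled by a limiting law rather than a frozen $\mu$: the continuous integrand uses $\mu^\alpha=\lim_t\mathcal{P}[X_t^\alpha]$ and the discrete one uses $\mu_h^\alpha=\lim_k\mathcal{P}[X_k^\alpha]$. Repeating the telescoping reduction produces, inside the cost defect, an extra term $h\,|f(x,\alpha(x),\mu_h^\alpha)-f(x,\alpha(x),\mu^\alpha)|\le h\,L_\mu\|\mu_h^\alpha-\mu^\alpha\|_{\text{TV}}$ by Assumption~\ref{assump:f}, which after dividing by $1-e^{-\gamma h}$ contributes at order $\|\mu_h^\alpha-\mu^\alpha\|_{\text{TV}}$. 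It therefore suffices to prove $\|\mu_h^\alpha-\mu^\alpha\|_{\text{TV}}=O(h^{1/2})$, and I would do this by realizing $\mu^\alpha$ as the fixed point of the exact time-$h$ transition and $\mu_h^\alpha$ as the fixed point of the Euler kernel, invoking the uniform Doeblin contraction (Assumption~\ref{assump:doeblin}, exactly as in Proposition~\ref{prop:conv_mu}) to turn the one-step kernel defect into a stationary-law bound, and invoking the It\^o--Lions expansion of Lemma~\ref{lem:Ito} to quantify the weak one-step discrepancy of the two transitions acting on functionals of the law along the McKean--Vlasov flow.

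The hard part is precisely this last measure-convergence step. In MFG the measure is external, but in MFC it is itself an output of the dynamics being discretized, so the kernel defect and the stationary law it determines are coupled; closing the estimate requires the Doeblin margin to dominate the Lipschitz-in-$\mu$ feedback (the same $2\beta-1-L_p>0$ condition used elsewhere), while the mean-field It\^o calculus of Lemma~\ref{lem:Ito} supplies the weak rate that plain pathwise coupling cannot, since $V$ and $f$ now depend on the evolving law. A secondary obstacle is purely regularity bookkeeping: one must secure Lipschitzness of the value functions and the mild second-order smoothness in the measure needed to apply Lemma~\ref{lem:Ito}, uniformly in $h$, which again relies on $\gamma$ dominating the Lipschitz constants so that the infinite-horizon flow estimates remain finite.
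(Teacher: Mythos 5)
Your route is genuinely different from the paper's: you set up one-step Bellman identities for the continuous and discrete value functions, telescope, and absorb the self-referential term through the discount, so that the global error is (local defect)$/(1-e^{-\gamma h})$. The paper instead works globally: it writes $V_h^{\mu,\alpha}$ as $\sum_k\int_{kh}^{(k+1)h}e^{-\gamma kh}f(X_h^k,\alpha(X_h^k),\mu)\,ds$, invokes the uniform-in-$k$ strong Euler--Maruyama error $\mathbb{E}\|X_h^k-X_s\|=O(h^{1/2})$ for $s\in[kh,(k+1)h)$, and compares integrands directly under the integrable weight $e^{-\gamma s}$; for MFC it controls the dependence on the law through the first-order term of the It\^o--Lions expansion applied to $f$ with increment $\eta_{k,s}=X_h^k-X_s$, never comparing stationary distributions. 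Your approach avoids the paper's (tacit) need for a time-uniform strong error constant, but it pays for this elsewhere, and that is where it breaks.

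The gap is in your estimate of the one-step propagation defect $C(x)=\mathbb{E}[V^{\mu,\alpha}(X_1)]-\mathbb{E}[V^{\mu,\alpha}(X_h)]$. You claim that coupling the Euler step with the exact flow through the same Brownian path makes ``the martingale parts cancel and only the drift defect survives.'' That is only true for constant $\sigma$. Under Assumption~\ref{assump:b-sigma}, $\sigma$ depends on $(x,a)$, and the surviving martingale mismatch $\int_0^h\bigl(\sigma(X_s,\alpha(X_s))-\sigma(x,\alpha(x))\bigr)dB_s$ has $L^2$ norm of order $h$ (It\^o isometry gives $\int_0^h O(s)\,ds=O(h^2)$ for the square). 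With only Lipschitz continuity of $V^{\mu,\alpha}$, this yields $|C(x)|=O(h)$, and after dividing by $1-e^{-\gamma h}\sim\gamma h$ you get $O(1)$, not $O(h^{1/2})$ --- the rate is lost entirely. To repair this you must either restrict to constant $\sigma$, or upgrade $V^{\mu,\alpha}$ to $C^{1,1}$ regularity and exploit that the martingale mismatch has \emph{zero mean}, so that $\mathbb{E}[\nabla V(x)\cdot(X_1-X_h)]=O(h^{3/2})$ and the second-order remainder is $O(h^{3/2})$ by Cauchy--Schwarz; neither step is in your write-up, and the required $C^{1,1}$ bound on the infinite-horizon value function is itself a nontrivial flow estimate. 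A secondary concern: your MFC step reduces to $\|\mu_h^\alpha-\mu^\alpha\|_{\mathrm{TV}}=O(h^{1/2})$ via Assumption~\ref{assump:doeblin}, but that assumption is stated for the finite-state kernel $P^{Q,\mu}$ of the learning algorithm, not for the continuous-state SDE semigroup appearing in this theorem, and the total-variation defect between the one-step Euler kernel and the exact time-$h$ kernel is not established; the paper circumvents both issues by comparing time-marginal laws through the strong pathwise error inside the It\^o--Lions first variation rather than comparing invariant measures.
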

\begin{proof}
   We may consider the discrete time iteration
    \begin{equation}
        X_{h}^{k+1} = b(X_h^k, \alpha(X_{h}^k)) h + \sigma(X_h^k, \alpha(X_{h}^k)) \sqrt{h} B_k,
    \end{equation}
    with $B_k\sim_{i.i.d} \mathcal{N}(0,I)$, which can be viewed as 
  the Euler-Maruyama scheme of the SDE
    \begin{equation}
		dX_t = b(X_t, \alpha_t)dt + \sigma(X_t, \alpha_t) dB_t.
	\end{equation}
 We ignore $b, \sigma$'s dependence on $\mu$ here by assuming the limiting distribution is fixed. It is well-known that the Euler-Maruyama scheme is an order $1/2$-scheme in the strong sense \cite{kloeden2012numerical}. In other words, with Lipschitzness assumptions of $b, \sigma$, and a slight modification of the induction proof, one can conclude immediately that there exists a constant $C>0$ such that
 \begin{equation}
     \mathbb{E}\big[\|X_h^k-X_{kh}\|\big|X_0=x\big]\leq C h^{1/2}
 \end{equation}
 for all $k\geq 1$. Then, for $t\in [kh, (k+1)h)$, since
 \begin{equation}
     X_t = X_{kh}+\int_{kh}^t b(X_s, \alpha_s ) ds+ \int_{kh}^t \sigma(X_s, \alpha_s ) dB_s,
 \end{equation}
 by It\^{o}'s isometry and boundedness of $b, \sigma$, we get
 \begin{equation}
     \mathbb{E}\big[\|X_t-X_{kh}\|\big|X_0=x\big]\leq Ch^{1/2}.
 \end{equation}
 Therefore,  the triangle inequality gives that for $t\in [kh, (k+1)h)$,
 \begin{equation}
     \mathbb{E}\big[\|X_h^k-X_{t}\|\big|X_0=x\big]\leq C h^{1/2}.
 \end{equation}
 \paragraph{MFG:} We start from the definition of $V_{h, \text{MFG}}^{\mu,\alpha}$  and derive that
 \begin{equation}
\begin{aligned}
   V_{h, \text{MFG}}^{\mu,\alpha}(x) 
   &= \mathbb{E} \left[ h \sum_{k=0}^{\infty}  e^{-\gamma k h} f(X_h^k , \alpha(X_h^k), \mu )  \bigg\vert X_0 =x  \right] \\
  =&
  \mathbb{E} \left[
  \sum_{k=0}^\infty  
  \int_{kh}^{(k+1)h} e^{-\gamma k h}   f(X_h^k , \alpha(X_h^k), \mu )   ds
  \bigg\vert X_0 =x  \right] \\
  =& 
  \mathbb{E} \left[\int_0^\infty e^{-\gamma s} f(X_s, \alpha(X_s), \mu) ds\bigg\vert X_0=x\right]\\
  &
  \quad + \mathbb{E} \left[
  \sum_{k=0}^\infty \int_{kh}^{(k+1)h} \Big(e^{-\gamma k h}   f(X_h^k , \alpha(X_h^k), \mu )  - e^{-\gamma s} f(X_s, \alpha(X_s), \mu) \Big)ds\bigg\vert X_0=x\right]\\
  &:= V_{\text{MFG}}^{\mu,\alpha}(x) + \mathcal{E}_{\text{MFG}},
  \end{aligned}
\end{equation}
and what remains is to estimate the error term $\mathcal{E}_{\text{MFG}}$. Apply the triangle inequality, we get
\begin{equation}
    \begin{aligned}
\big|\mathcal{E}_{\text{MFG}}\big|&\leq \mathbb{E} \left[
  \sum_{k=0}^\infty \int_{kh}^{(k+1)h} \big|e^{-\gamma k h}-e^{-\gamma s}\big|   \big|f(X_h^k , \alpha(X_h^k), \mu ) \big|ds\bigg\vert X_0=x\right]\\
  &\quad + \mathbb{E} \left[
  \sum_{k=0}^\infty \int_{kh}^{(k+1)h} e^{-\gamma s}  \Big| f(X_h^k , \alpha(X_h^k), \mu )  - f(X_s, \alpha(X_s), \mu)\Big| ds\bigg\vert X_0=x\right]\\
  &\leq \|f\|_{\infty} \sum_{k=0}^\infty \int_{kh}^{(k+1)h}\big| e^{-\gamma k h}-e^{-\gamma s} \big|ds+ \tilde{L}\mathbb{E} \left[
  \sum_{k=0}^\infty \int_{kh}^{(k+1)h} e^{-\gamma s}  \|X_h^k-X_s\| ds\bigg\vert X_0=x\right]\\
  & = \|f\|_{\infty}\bigo(h)\int_0^\infty e^{-\gamma s} ds + \tilde{L} \bigo(h^{1/2})\int_0^\infty e^{-\gamma s} ds = \bigo(h^{1/2})
    \end{aligned}
\end{equation}
for small $h$, where in the last inequality we use Assumption~\ref{assump:f}, Lipschitz assumption of $\alpha$, and take $\tilde{L}:=\max\{K_x, K_{\alpha}C_{\alpha}\}$. Thus we conclude that
\begin{equation}
     V_{h, \text{MFG}}^{\mu,\alpha}(x)  =V_{\text{MFG}}^{\mu,\alpha}(x) + \lito(1),
\end{equation}
as $h\to 0$.
 \paragraph{MFC:} For the MFC case, we need to additionally deal with $f(x,a,\mu)$'s dependence on the changing distributions $\mu$. We again start from the definition of $V_{h, \text{MFC}}^{\alpha}$  and derive that

 \begin{equation}
\begin{aligned}
   V_{h, \text{MFC}}^{\alpha}(x) 
   &= \mathbb{E} \left[ h \sum_{k=0}^{\infty}  e^{-\gamma k h} f(X_h^k , \alpha(X_h^k), \mathcal{P}[X_h^k] )  \bigg\vert X_0 =x  \right] \\
  =&
  \mathbb{E} \left[
  \sum_{k=0}^\infty  
  \int_{kh}^{(k+1)h} e^{-\gamma k h}   f(X_h^k , \alpha(X_h^k), \mathcal{P}[X_h^k])   ds
  \bigg\vert X_0 =x  \right] \\
  =& 
  \mathbb{E} \left[\int_0^\infty e^{-\gamma s} f(X_s, \alpha(X_s), \mathcal{P}[X_s]) ds\bigg\vert X_0=x\right]\\
  &
  \quad + \mathbb{E} \left[
  \sum_{k=0}^\infty \int_{kh}^{(k+1)h} \Big(e^{-\gamma k h}   f(X_h^k , \alpha(X_h^k), \mathcal{P}[X_h^k] )  - e^{-\gamma s} f(X_s, \alpha(X_s), \mathcal{P}[X_s]) \Big)ds\bigg\vert X_0=x\right]\\
  &:= V_{\text{MFC}}^{\alpha}(x) + \mathcal{E}_{\text{MFC}}.
  \end{aligned}
\end{equation}
The error term $\mathcal{E}_{\text{MFC}}$ can be further split into
\begin{equation}
    \begin{aligned}
   \big|\mathcal{E}_{\text{MFC}}\big|     &\leq \mathbb{E} \left[
  \sum_{k=0}^\infty \int_{kh}^{(k+1)h} \big|e^{-\gamma k h}-e^{-\gamma s}\big|   \big|f(X_h^k , \alpha(X_h^k), \mathcal{P}[X_h^k] ) \big|ds\bigg\vert X_0=x\right]\\
  &\quad + \mathbb{E} \left[
  \sum_{k=0}^\infty \int_{kh}^{(k+1)h} e^{-\gamma s}  \Big| f(X_h^k , \alpha(X_h^k), \mathcal{P}[X_h^k] )  - f(X_s, \alpha(X_s), \mathcal{P}[X_h^k])\Big| ds\bigg\vert X_0=x\right]\\
  &\quad + \mathbb{E} \left[
  \sum_{k=0}^\infty \int_{kh}^{(k+1)h} e^{-\gamma s}  \Big| f(X_s , \alpha(X_s), \mathcal{P}[X_h^k] )  - f(X_s, \alpha(X_s), \mathcal{P}[X_s])\Big| ds\bigg\vert X_0=x\right]\\
  &:= I+II+III.
    \end{aligned}
\end{equation}
Apply the triangle inequality and Assumption~\ref{assump:f} as we did for MFG, we get again that
\begin{equation}
    I+II = O(h^{1/2}).
\end{equation}
For $III$, we need to apply Lemma \ref{lem:Ito}. By taking $\eta_{k,s}:= X_h^k-X_s$ and write $f(X_s, \alpha(X_s), \cdot)\equiv f_s(\cdot)$, we have that
\begin{equation}
\begin{aligned}
    III&=  \mathbb{E} \left[
  \sum_{k=0}^\infty \int_{kh}^{(k+1)h} e^{-\gamma s} \mathbb{E}[\d_\mu f_s(\mathcal{P}[X_s], X_s) \cdot \eta_{k,s}]ds\bigg\vert X_0=x\right]\\
    &\quad +\mathbb{E} \left[
  \sum_{k=0}^\infty \int_{kh}^{(k+1)h} e^{-\gamma s} \frac{1}{2} \mathbb{E}\Big[\tilde{\mathbb{E}}\big[\Tr\big(\d_{\mu}^2f_s(\mathcal{P}[X_s], \tilde{X_s}, X_s)\cdot \tilde{\eta}_{k,s}\otimes\eta_{k,s}\big)\big]\Big]ds\bigg\vert X_0=x\right]\\
    &\quad +\frac{1}{2} \mathbb{E} \left[
  \sum_{k=0}^\infty \int_{kh}^{(k+1)h} e^{-\gamma s}\mathbb{E}\Big[\Tr\big(\d_y \d_{\mu} V^{\alpha }_{h, MFC}(\mathcal{P}[x], x)\cdot \eta_{k,s}\otimes\eta_{k,s}\big) \Big]ds\bigg\vert X_0=x\right]\\
  &\quad +  \mathbb{E} \left[
  \sum_{k=0}^\infty \int_{kh}^{(k+1)h} e^{-\gamma s}O(\|\eta_{k,s}\|^3)ds\bigg\vert X_0=x\right] = O(h^{1/2}),
 \end{aligned}
\end{equation}
since the first term above dominates others as $\mathbb{E}[\|\eta_{k,s}\|\vert X_0=x]\sim O(h^{1/2})$ for small $h$. Thus we conclude that
\begin{equation}
     V_{h, \text{MFC}}^{\mu,\alpha}(x)  =V_{\text{MFC}}^{\mu,\alpha}(x) + \lito(1),
\end{equation}
as $h\to 0$.
\end{proof}

\section{}\label{sec:uniqueness}
\begin{lemma}
    With Assumptions \ref{assump:f}, \ref{assump:p} and \ref{assump:doeblin}, there exists a pair of fixed points $(Q_h^*, \tilde\mu^*)$ solving \eqref{eqn:conv_bellman} and \eqref{eqn:conv_tildemu}. In addition, if
    \begin{equation}
    \frac{L_Q}{2\beta-1-L_p}  \frac{hL_f+e^{-\gamma h}L_p\|Q\|_\infty}{1-e^{-\gamma h}}<1,
\end{equation}
then such a pair of fixed points is unique.
\end{lemma}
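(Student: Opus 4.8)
The plan is to eliminate $Q$ from the coupled system \eqref{eqn:conv_bellman}--\eqref{eqn:conv_tildemu}, reduce the whole problem to a single fixed-point equation for $\tilde\mu^*$, and then recognize the hypothesis of the lemma as exactly the contraction factor of the resulting self-map on $\mathcal{P}(\mathcal{X})$. Existence is already available from Proposition~\ref{prop:brouwer}, which produces a pair $(Q_h^*,\tilde\mu^*)$ via Brouwer's theorem; thus only the uniqueness assertion requires new work, and for that a contraction argument is the natural route.

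First I would note that for each fixed $\mu$ the Bellman operator $\mathcal{B}_\mu$ from \eqref{Bellman_operator} is a contraction in $\|\cdot\|_\infty$ with factor $e^{-\gamma h}<1$, since $\sum_{x'}p(x'\mid x,a,\mu)=1$ and $|\min_{a'}Q_1(x',a')-\min_{a'}Q_2(x',a')|\le\|Q_1-Q_2\|_\infty$. Hence there is a unique $Q_\mu$ solving $Q_\mu=\mathcal{B}_\mu(Q_\mu)$, so $\mu\mapsto Q_\mu$ is well-defined, and its Lipschitz constant follows from the continuity estimate already recorded in the proof of Proposition~\ref{prop:brouwer}. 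Writing $Q_{\mu_i}=\mathcal{B}_{\mu_i}(Q_{\mu_i})$, inserting $\mathcal{B}_{\mu_1}(Q_{\mu_2})$, and absorbing the $e^{-\gamma h}\|Q_{\mu_1}-Q_{\mu_2}\|_\infty$ term to the left-hand side yields
\[
\|Q_{\mu_1}-Q_{\mu_2}\|_\infty\le\frac{hL_f+e^{-\gamma h}L_p\|Q\|_\infty}{1-e^{-\gamma h}}\,\|\mu_1-\mu_2\|_{\text{TV}},
\]
where $\|Q\|_\infty$ is controlled uniformly by $\frac{h}{1-e^{-\gamma h}}\|f\|_\infty$ through \eqref{eq:boundQ}.

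Next I would compose with Lemma~\ref{lem:mu_and_Q}. For each $Q$ the equation $\mu=\mu P^{Q,\mu}$ has a unique solution $\mu_Q$ by Proposition~\ref{prop:brouwer}, and Lemma~\ref{lem:mu_and_Q} gives the bound $\|\mu_{Q_1}-\mu_{Q_2}\|_{\text{TV}}\le\frac{L_Q}{2\beta-1-L_p}\|Q_1-Q_2\|_\infty$. Defining $\Phi(\mu):=\mu_{Q_\mu}$ on $\mathcal{P}(\mathcal{X})$, a fixed point of $\Phi$ corresponds precisely to a solution pair $(Q_h^*,\tilde\mu^*)$ of \eqref{eqn:conv_bellman}--\eqref{eqn:conv_tildemu}, and chaining the two Lipschitz estimates gives
\[
\|\Phi(\mu_1)-\Phi(\mu_2)\|_{\text{TV}}\le\frac{L_Q}{2\beta-1-L_p}\,\frac{hL_f+e^{-\gamma h}L_p\|Q\|_\infty}{1-e^{-\gamma h}}\,\|\mu_1-\mu_2\|_{\text{TV}}.
\]
Under the stated hypothesis the prefactor is strictly less than $1$, so $\Phi$ is a contraction and can have at most one fixed point; since $Q_{\tilde\mu^*}$ is then uniquely determined by $\tilde\mu^*$, the whole pair is unique.

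The main obstacle I expect is bookkeeping rather than anything conceptual: the two Lipschitz bounds live in different norms ($\|\cdot\|_\infty$ for $Q$ and $\|\cdot\|_{\text{TV}}$ for $\mu$), and one must check carefully that the a priori bound \eqref{eq:boundQ} applies uniformly to every $Q_\mu$ that arises, so that the factor $\|Q\|_\infty$ in the hypothesis can legitimately be treated as a single constant across all $\mu$ instead of a $\mu$-dependent quantity. Once that uniformity is secured, the composition into a contraction on $(\mathcal{P}(\mathcal{X}),\|\cdot\|_{\text{TV}})$ is immediate and the uniqueness conclusion follows.
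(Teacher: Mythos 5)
Your proof is correct and rests on exactly the same two estimates as the paper's: Lemma~\ref{lem:mu_and_Q} for the $\mu$-versus-$Q$ direction and the Lipschitz bound on the Bellman operator $\mathcal{B}_\mu$ (with the $e^{-\gamma h}$ term absorbed to the left) for the $Q$-versus-$\mu$ direction, yielding the identical contraction factor. The only difference is packaging — you compose the two maps into a self-map $\Phi(\mu)=\mu_{Q_\mu}$ and invoke contraction, whereas the paper applies the chained inequality directly to the difference of two putative fixed points — and your uniform control of $\|Q_\mu\|_\infty$ via \eqref{eq:boundQ} correctly handles the one bookkeeping point you flagged.
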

\begin{proof}
 We have shown the existence in the proof of Proposition \ref{prop:brouwer}. In terms of uniqueness, suppose that we find two fixed points $(Q_{h,1}^*, \tilde\mu_1^*)$ and $(Q_{h,2}^*, \tilde\mu_2^*)$ both solving \eqref{eqn:conv_bellman} and \eqref{eqn:conv_tildemu}, by Lemma \ref{lem:mu_and_Q}, we have 
\begin{equation}
    \|\tilde{\mu}_1^*-\tilde{\mu}_2^*\|_{\text{TV}} \leq \frac{L_Q}{2\beta-1-L_p} \|Q_{h,1}^*- Q_{h,2}^*\|_\infty.
\end{equation}
On the other hand, utilizing the Bellman operator $\mathcal{B}_{\mu}(Q)$ defined in (\ref{Bellman_operator}), we get 
\begin{equation}
\begin{aligned}\label{mar26_lip}
    \|Q_{h,1}^*- Q_{h,2}^*\|_\infty &= \|\mathcal{B}_{\tilde{\mu}_1^*} (Q_{h,1}^*)-\mathcal{B}_{\tilde{\mu}_2^*} (Q_{h,2}^*)\|_\infty\\
    &\leq (hL_f+e^{-\gamma h}L_p\|Q\|_\infty)\|\tilde{\mu}_1^*-\tilde{\mu}_2^*\|_{\text{TV}}+e^{-\gamma h} \|Q_{h,1}^*- Q_{h,2}^*\|_\infty,
    \end{aligned}
\end{equation}
so that 
\begin{equation}
    \|\tilde{\mu}_1^*-\tilde{\mu}_2^*\|_{\text{TV}} \leq \frac{L_Q}{2\beta-1-L_p}  \frac{hL_f+e^{-\gamma h}L_p\|Q\|_\infty}{1-e^{-\gamma h}}\|\tilde{\mu}_1^*-\tilde{\mu}_2^*\|_{\text{TV}}.
\end{equation}
With sufficiently small $h$ and large $\gamma$ such that $\gamma h \gg 1$, we have the factor
\begin{equation}
    \frac{L_Q}{2\beta-1-L_p}  \frac{hL_f+e^{-\gamma h}L_p\|Q\|_\infty}{1-e^{-\gamma h}}<1,
\end{equation}
and therefore $\tilde{\mu}_1^*=\tilde{\mu}_2^*$ in the total variation norm, which implies that $Q_{h,1}^*= Q_{h,2}^*$.
\end{proof}


\bibliography{references}

\end{document}